\theoremstyle{definition}
\newtheorem{Def}{Definition}[section]
\newtheorem{Thm}[Def]{Theorem}
\newtheorem{Lem}[Def]{Lemma}
\newtheorem{Prop}[Def]{Proposition}
\newtheorem{Cor}[Def]{Corollary}
\newtheorem{Rem}[Def]{Remark}
\numberwithin{equation}{section}
\newcommand{\Z}{\mathbb{Z}}
\newcommand{\Q}{\mathbb{Q}}
\newcommand{\N}{\mathbb{N}}
\newcommand{\C}{\mathbb{C}}
\newcommand{\End}{\mathop{\mathrm{End}}\nolimits}
\newcommand{\Hom}{\mathop{\mathrm{Hom}}\nolimits}
\newcommand{\Stab}{\mathop{\mathrm{Stab}}\nolimits}
\newcommand{\codim}{\mathop{\mathrm{codim}}\nolimits}
\newcommand{\Gr}{\mathrm{Gr}}
\newcommand{\OO}{\mathbb{O}}
\newcommand{\Oo}{\mathcal{O}}
\newcommand{\Kk}{\mathcal{K}}
\newcommand{\Cc}{\mathcal{C}}
\newcommand{\bfa}{\mathbf{a}}
\newcommand{\diag}{\mathrm{diag}}
\newcommand{\Dom}{\mathbf{D}}
\newcommand{\SG}{\mathfrak{S}}
\newcommand{\Sp}{\mathtt{Sp}}
\newcommand{\Nn}{\mathcal{N}}
\newcommand{\Mm}{\mathcal{M}}
\newcommand{\T}{{}^{\mathsf{T}}\!}
\newcommand{\Irr}{\mathop{\mathsf{Irr}}\nolimits}
\newcommand{\IC}{\mathrm{IC}}
\newcommand{\sHom}{\mathscr{H}\text{\kern -3pt {\calligra\large om}}\,}
\newcommand{\Pp}{\mathcal{P}}
\newcommand{\KP}{\mathrm{KP}}
\newcommand{\Ee}{\mathcal{E}}
\newcommand{\LV}{\mathtt{LV}}
\newcommand{\Perv}{P}
\newcommand{\CC}{\mathfrak{C}}
\newcommand{\nc}{\newcommand}
\newcommand{\on}{\operatorname}
\nc{\ul}{\underline}
\nc{\wt}{\widetilde}
\nc{\Spec}{\mathop{\mathrm{Spec}}\nolimits}
\nc{\st}{{\mathsf{t}}}
\begin{document}

\title[Coherent IC-sheaves and dual canonical basis]
    {Coherent IC-sheaves on type $A_{n}$ affine Grassmannians and dual canonical basis
  of affine type $A_{1}$ 
    }

\subjclass[2010]{Primary: 17B37, 22E67, Secondary: 13F60.}
    
    \author{Michael Finkelberg}
 \address{M.F.:
  National Research University Higher School of Economics, Russian Federation,
  Department of Mathematics, 6 Usacheva st., Moscow 119048;
  Skolkovo Institute of Science and Technology;
  Institute for Information Transmission Problems}
 \email{fnklberg@gmail.com}
 
 \author{Ryo Fujita}
 \address{R.F.:
 Institut de Math\'{e}matiques de Jussieu-Paris Rive Gauche, IMJ-PRG,
 Universit\'{e} de Paris, B\^{a}timent Sophie Germain, F-75013, Paris, France}
 \email{ryo.fujita@imj-prg.fr}
   
    \begin{abstract}
      The convolution ring $K^{GL_n(\Oo)\rtimes\C^\times}(\Gr_{GL_n})$ was identified with
      a quantum unipotent cell of the loop group $LSL_2$ in~\cite{CW18}. We identify
      the basis formed by the classes of irreducible equivariant perverse coherent sheaves with
      the dual canonical basis of the quantum unipotent cell.
    \end{abstract}

    \maketitle

    \section{Introduction}

    \subsection{}
    The affine Grassmannian $\Gr_{GL_n}=GL_n(\Kk)/GL_n(\Oo)$ (where
    $\Kk=\C(\!(t)\!),\ \Oo=\C[\![t]\!]$),
    is a basic object of the geometric Langlands program.
    The convolution ring $K^{GL_n(\Oo)\rtimes\C^\times}(\Gr_{GL_n})$ (where $\C^\times$ acts by loop
    rotations) is a simplest example of the
    quantized $K$-theoretic Coulomb branch of a quiver gauge theory (for $\mathsf{A}_1$-quiver).
    The corresponding non-quantized $K$-theoretic Coulomb branch
    $K^{GL_n(\Oo)}(\Gr_{GL_n})$ is a commutative ring, whose spectrum is the trigonometric
    zastava space $^\dagger\!\overset{\circ}{Z}{}^n_{\mathfrak{sl}_2}$ of type $\mathsf{A}_1$ and degree $n$
    (alias moduli space of periodic $SU(2)$-monopoles of topological charge $n$).
    This space was thoroughly studied in yet another disguise in~\cite{GSV11},
    where its coordinate ring was equipped with a cluster structure.

    It is expected by physicists that all the $K$-theoretic Coulomb branches of gauge
    theories should carry a (generalized) cluster structure (for trigonometric
    zastava see~\cite{FKR18}). Moreover, it is expected that the quantized
    $K$-theoretic Coulomb branches should carry a quantum cluster structure.
    In the simplest example of $K^{GL_n(\Oo)\rtimes\C^\times}(\Gr_{GL_n})$ such a
    structure was exhibited in~\cite{CW18}. It was identified with a well known
    cluster structure on a quantum unipotent cell of the loop group $LSL_2$
    (in the non-quantized case, general trigonometric zastava spaces are identified with
    appropriate affine Richardson varieties in~\cite{FKR18}).

    Furthermore, all the cluster monomials of this cluster structure have a nice
    geometric meaning as classes in $K^{GL_n(\Oo)\rtimes\C^\times}(\Gr_{GL_n})$ of certain
    irreducible $GL_n(\Oo)\rtimes\C^\times$-equivariant perverse coherent sheaves
    on the affine Grassmannian $\Gr_{GL_n}$. The abelian monoidal category 
	$\Perv_{coh}^{GL_{n}(\Oo) \rtimes \C^{\times}}(\Gr_{GL_{n}})$	
	of perverse coherent sheaves was introduced in~\cite{BFM05}.
    Its $K$-ring coincides with $K^{GL_n(\Oo)\rtimes\C^\times}(\Gr_{GL_n})$, and it is equipped
    with a distinguished basis formed by the classes of IC-sheaves: irreducible equivariant
    perverse coherent sheaves. The problem of algebraic computation of this distinguished
    basis was standing ever since the appearance of~\cite{BFM05}, and the cluster monomials
    description of certain IC-classes given in~\cite{CW18} was a breakthrough in this direction.
    However, the IC-classes representable as cluster monomials only form a tip of the iceberg
    of all the IC-classes; namely, they are IC-extensions of certain equivariant {\em line} bundles
    on $GL_n(\Oo)$-orbits in $\Gr_{GL_n}$ (so this is similar to the lowest KL-cell in an
    affine Hecke algebra).

    Now the cluster monomials in a quantum unipotent cell of $LSL_2$ form a part of the
    dual canonical basis of the quantum group $U_q^+(\widehat{\mathfrak{sl}}_2)$ 
    (more precisely, of a certain localization of a subalgebra of its restricted dual) thanks to \cite{KKKO18}. So it is natural to
    expect that this dual canonical basis corresponds to the above distinguished basis formed
    by the IC-classes. This is indeed proved in the present paper.

    The dual canonical basis is characterized by two properties: (1) invariance with respect
    to a certain bar-involution; (2) the fact that the transformation matrix to the dual
    canonical basis from the dual PBW basis is identity modulo $q^{-1}$ (where
    $\Z[q^{\pm1}]=K_{\C^\times}(\on{pt})$). The corresponding bar-involution on
    $K^{GL_n(\Oo)\rtimes\C^\times}(\Gr_{GL_n})$ (fixing IC-classes) was introduced in~\cite{CW18}.
    The dual PBW basis corresponds to certain convolutions of line bundles on the first
    minuscule orbit in $\Gr_{GL_n}$. The analogue of property (2) for the usual {\em constructible}
    IC-sheaves is very deep (it boils down to the Riemann-Weil conjecture proved by Deligne).
    In the {\em coherent} setting of equivariant sheaves on nilpotent cone
    $\Perv_{coh}^{G\times\C^\times}(\Nn_{\mathfrak g})$ a similar property was proved by
    Bezrukavnikov
    in~\cite{Bez06} by making use of his coherent-constructible correspondence and reducing
    to the above result of Deligne. In this setting the role of dual PBW basis is played by
    the classes of Andersen-Jantzen sheaves (pushforwards of the dominant line bundles from
    the Springer resolution of the nilpotent cone).

    We are able to check the property (2) by reducing it to Bezrukavnikov's theory for
    $\Perv_{coh}^{GL_d\times\C^\times}(\Nn_{\mathfrak{gl}_d})$. Namely, we consider a closed
    subvariety $\Gr_n^d\subset\Gr_{GL_n}$ formed by all the sublattices of codimension $d$
    in the standard lattice $\Oo^n\subset\Kk^n$. Then we have a natural smooth morphism
    of stacks \begin{equation*}\psi\colon [(GL_n(\Oo)\rtimes\C^\times)\backslash\Gr_n^d]\to
    [(GL_d\times\C^\times)\backslash\Nn_{\mathfrak{gl}_d}],\end{equation*} and $\psi^*$ takes coherent
    IC-sheaves to IC-sheaves, and the Andersen-Jantzen sheaves to the appropriate $d$-fold
    convolutions of line bundles on $\Gr_n^1$.

    The appearance of the {\em dual} canonical basis is natural from yet another point of view.
    According to~\cite{FT18}, $K^{GL_n(\Oo)\rtimes\C^\times}(\Gr_{GL_n})$ is the homomorphic image
    of a certain integral form (i.e.\ a $\Q[q^{\pm1}]$-subalgebra) of a shifted quantum affine
    algebra of type $\mathsf{A}_1$. This integral form is spanned by the {\em dual}
    Poincar\'e-Birkhoff-Witt-Drinfeld basis.

    Finally, we should note that the problem of algebraic characterization of the IC-basis
    of $K^{G(\Oo)\rtimes\C^\times}(\Gr_G)$ makes sense for arbitrary reductive group $G$, and we
    have no clue how to approach it for $G\ne GL_n$.

    \subsection{Acknowledgments} We are grateful to R.~Bezrukavnikov, S.~Cautis, S.~Kato,
    A.~Tsymbaliuk and H.~Williams for the useful discussions. Moreover, the key idea to apply
    the relation between affine Grassmannians of type $A$ and nilpotent cones of type $A$
    (going back to~\cite{L81}) to computation of classes of coherent IC-sheaves
    is due to R.~Bezrukavnikov.

    M.F.\ was partially funded within the framework of the HSE University Basic Research Program
    and the Russian Academic Excellence Project `5-100'.
	R.F.\ was supported by Grant-in-Aid for JSPS Research Fellow (No.~18J10669) 
	and in part by Kyoto Top Global University program.  

	\subsection{Overall convention} 
	A variety always means a complex algebraic variety. 
	Let $X$ be a variety equipped with an action of a complex algebraic group $G$. 
	For a (closed) point $x \in X$, we denote by $\Stab_{G} x$ the stabilizer of $x$ in $G$.
	We denote by $D^{G}_{coh}(X)$ (resp.~$D^{G}_{qcoh}(X)$) 
	the derived category of bounded (resp.~unbounded) $G$-equivariant complexes of sheaves on $X$ 
	whose cohomologies are coherent (resp.~quasi-coherent).
	We denote by $\mathbb{D} := \mathbb{R} \sHom(-, \omega_{X})$ the 
	Grothendieck-Serre duality functor on $D^{G}_{coh}(X)$,
	where $\omega_{X}$ is a $G$-equivariant dualizing complex on $X$.   
	For a group automorphism 
	$\rho$ of $G$, we denote by $X^{\rho}$ the same variety $X$ with 
	a new $G$-action obtained by twisting the original $G$-action by $\rho$.
	For an object $\mathcal{F} \in D^{G}_{coh}(X)$, we denote by $\mathcal{F}^{\rho}$
	the sheaf obtained by twisting the $G$-equivariant structure of $\mathcal{F}$ by $\rho$.
	Then $\mathcal{F}^{\rho}$ is an object of $D^{G}_{coh}(X^{\rho})$.

	For an abelian category $\mathcal{A}$, we denote by $\Irr \mathcal{A}$
	the set of isomorphism classes of simple objects of $\mathcal{A}$. 
	We abbreviate $\Irr G := \Irr \mathrm{Rep}(G)$, where $\mathrm{Rep}(G)$
	is the category of finite-dimensional algebraic representations of $G$ over $\C$.  

\section{Quantum unipotent cell of $LSL_{2}$}


\subsection{Quantum algebras of type $\mathsf{A}_{1}^{(1)}$}

Let $
\left( \begin{array}{cc}
a_{00} & a_{01} \\
a_{10} & a_{11}
\end{array}
\right)
=
\left( \begin{array}{cc}
2 & -2 \\
-2 & 2
\end{array}
\right)
$
be the generalized Cartan matrix of type $\mathsf{A}^{(1)}_{1}$ and
$\mathsf{Q} := \Z \alpha_{0} \oplus \Z \alpha_{1}$ be the root lattice
($\alpha_{0}, \alpha_{1}$ are the simple roots).
We define a symmetric bilinear form $(-,-)$ on $\mathsf{Q}$ by 
$(\alpha_{i}, \alpha_{j}) = a_{ij}$ for $i, j \in \{0,1\}$.
We set $\mathsf{Q}^{+} := \N \alpha_{0} + \N \alpha_{1} \subset \mathsf{Q}$. 

Let $U^{+} \equiv U_{q}^{+}(\widehat{\mathfrak{sl}}_{2})$ be the $\Q(q)$-algebra generated by the
two generators $\{ e_{0}, e_{1}\}$ satisfying the quantum Serre relation
$$
e_{i}^{3}e_{j} - (q^{2}+q^{-2})e_{i}^{2}e_{j}e_{i} 
+ (q^{2}+q^{-2}) e_{i}e_{j}e^{2}_{i} - e_{j}e_{i}^{3} = 0
$$
for $ \{ i,j \} = \{ 0,1\}.$
The algebra $U^{+}$ is the positive (or the upper triangular) part 
of the quantized enveloping algebra $U_{q}(\widehat{\mathfrak{sl}}_{2})$. 
We define the weight grading 
$U^{+} = \bigoplus_{\beta \in \mathsf{Q}^{+}} U_{\beta}^{+}$
by setting $\deg e_{i} := \alpha_{i}$.

We equip the tensor square $U^{+} \otimes_{\Q(q)} U^{+}$ with 
a $\Q(q)$-algebra structure by
$$
(x_{1} \otimes x_{2} ) \cdot (y_{1} \otimes y_{2})
:= q^{(\beta_{2}, \gamma_{1})} x_{1}y_{1} \otimes x_{2} y_{2},
$$ where $
x_{i} \in U_{\beta_{i}}^{+}, y_{i} \in U_{\gamma_{i}}^{+}
$.
Let $r\colon  U^{+} \to U^{+} \otimes_{\Q(q)} U^{+}$ be a $\Q(q)$-algebra
homomorphism given by
$$
r(e_{i}) := e_{i} \otimes 1 + 1 \otimes e_{i}
$$
for $i=0,1$. 
Let $A =  
\bigoplus_{\beta \in \mathsf{Q}^{+}} A_{\beta}
=
\bigoplus_{\beta \in \mathsf{Q}^{+}}
\Hom_{\Q(q)}(U^{+}_{\beta}, 
\Q(q))$ be the restricted dual 
of $U^{+}$ with a $\Q(q)$-algebra structure
given by the dual of $r$. 



Following Lusztig~\cite[Proposition 1.2.3]{Lus93},
we define a
nondegenerate symmetric $\Q(q)$-bilinear form 
$(-,-)_{L}$ on $U^{+}$ by 
$$
(1,1)_{L} = 1, \quad (e_{i}, e_{j})_{L} = (1-q^{-2})^{-1} \delta_{ij}, \quad
(x, yz)_{L} = (r(x), y \otimes z)_{L},
$$
where $(x \otimes y , z \otimes w)_{L} := (x,z)_{L} \cdot 
(y, w)_{L}.$
The bilinear form $(-,-)_{L}$ induces 
a $\Q(q)$-algebra isomorphism
$\psi_{L}\colon U^{+} \cong A$  
defined by
$
\langle \psi_{L}(x), y \rangle = (x, y)_{L},
$
where 
$\langle -, - \rangle\colon A \times U^{+} \to \Q(q)$
is the natural pairing.



We define an algebra involution $\mathtt{b}$ of $U^{+}$
by
$$
\mathtt{b}(q) = q^{-1}, \quad
\mathtt{b}(e_{i}) = e_{i}.
$$
Let $\mathtt{b}^{*}$
denote the $\Q$-linear involution of $A$
defined as the dual of $\mathtt{b}$, i.e.~for $\theta \in A, x \in U^{+}$, we define
$$
\langle \mathtt{b}^{*}(\theta), x \rangle :=
\overline{\langle \theta, \mathtt{b}(x) \rangle}
$$
where $\overline{f(q)} := f(q^{-1})$ for $f(q) \in \Q(q)$.
Then for $\theta_{i} \in A_{\beta_i} \, (i = 1,2)$, we have 
$
\mathtt{b}^{*}(\theta_1 \theta_2) = q^{-(\beta_1, \beta_2)}
\mathtt{b}^{*}(\theta_2) \mathtt{b}^{*}(\theta_1).
$



\subsection{Quantum unipotent subgroup}

We fix $n \in \N$ throughout this paper. 
Let 
$w_{n} = 
s_{i_1} \cdots s_{i_{2n}}
:= (s_{0}s_{1})^{n}$
be an element of the Weyl group of type
$\mathsf{A}^{(1)}_{1}$ of length $2n$, 
where $s_{i}$ is the simple reflection associated with the index $i \in \{ 0,1\}$. 
For each $1 \le k \le 2n$, we define the positive root $\beta_{k}$ by 
$$
\beta_{k} := s_{i_1} \cdots s_{i_{k-1}} (\alpha_{i_k})
= k \alpha_{0} + (k-1)\alpha_{1}. 
$$
The roots $\beta_{1}, \ldots, \beta_{2n}$ are all the positive roots $\alpha$ 
such that $w_{n}^{-1} (\alpha) < 0$. 
Define the corresponding root vectors by
$$
E(\beta_{k}) := T_{i_1} \circ \cdots \circ T_{i_{k-1}} (e_{i_k})
$$
for $1 \le k \le 2n$, where
$T_{i}$ denotes Lusztig's symmetry 
($=T^{\prime}_{i, -1}$ in Lusztig's notation, see~\cite[37.1]{Lus93} for the definition).
Let $U^{+}_{n} \subset U^{+}$ be the $\Q(q)$-subalgebra
generated by $\{ E(\beta_{k}) \mid 1 \le k \le 2n\}$.
The subalgebra 
$A_{n} := \psi_{L}(U^{+}_{n})$
of $A$ is
called the {\em quantum unipotent subgroup} associated with 
$w_{n}$. 
Both algebras $U^{+}_{n}$ and $A_{n}$ inherit the $\mathsf{Q}^{+}$-gradings 
from $U^{+}$ and $A$:
$$
U^{+}_{n} = \bigoplus_{\beta \in \mathsf{Q}^{+}} (U^{+}_{n})_{\beta},
\quad 
A_{n} = \bigoplus_{\beta \in \mathsf{Q}^{+}} (A_{n})_{\beta}. 
$$


\subsection{PBW and dual PBW bases}

For an element $\beta \in \mathsf{Q}^{+}$, 
we set  
$$
\KP_{n}(\beta) := \{ 
\bfa = (a_{1}, \ldots, a_{2n}) \in (\Z_{\ge 0})^{2n}
\mid
a_{1} \beta_{1} + \cdots + a_{2n} \beta_{2n} = \beta
\}. 
$$
For each $\bfa =(a_1, a_2, \ldots, a_{2n}) \in \KP_{n}(\beta)$,
we define the corresponding PBW element
as the product of $q$-divided powers by 
$$
E(\bfa) := 
E(\beta_{1})^{(a_{1})}
E(\beta_{2})^{(a_2)}
\cdots
E(\beta_{2n})^{(a_{2n})},
$$
where $x^{(\ell)} := x^{\ell} / (\prod_{i=1}^{\ell} [i]_{q}), \, [i]_{q} := \frac{q^{i} - q^{-i}}{q - q^{-1}}$ as usual.     
Then the set $\{ E(\bfa) \mid \bfa \in \KP_{n}(\beta) \}$
forms a $\Q(q)$-basis of $(U^{+}_{n})_{\beta}$. 
It is known (cf.~\cite[Proposition 38.2.3]{Lus93}) that we have
$(E(\bfa), E(\mathbf{b}))_{L} 
= 0$ if
$\bfa \neq \mathbf{b}$ and 
$$
(E(\bfa), E(\bfa))_{L} = \prod_{k=1}^{2n}
\prod_{j=1}^{a_k} (1-q^{-2j})^{-1}.
$$
For each $\bfa \in \KP_{n}(\beta)$,
we define the dual PBW element in $A_{n}$ by
$$
E^{*}(\bfa) := \frac{\psi_{L}(E(\bfa))}{
(E(\bfa), E(\bfa))_{L}
}. 
$$
By construction, the set $\{E^{*}(\bfa) \mid \bfa \in \KP_{n}(\beta) \}$ forms 
a $\Q(q)$-basis of $(A_{n})_{\beta}$ dual to the basis $\{ E(\bfa) \mid \bfa \in \KP_{n}(\beta) \}$ of $(U_{n}^{+})_{\beta}$.

The dual PBW element $E^{*}(\bfa)$ can be written simply as a product of the dual root vectors 
$E^{*}(\beta_{k}) = (1-q^{-2})\psi_{L}(E(\beta_{k}))$:
\begin{equation} \label{Eq:dual PBW}
E^{*}(\bfa) = q^{- \sum_{k=1}^{2n} a_{k}(a_{k}-1) /2} E^{*}(\beta_{1})^{a_{1}} E^{*}(\beta_{2})^{a_{2}} \cdots E^{*}(\beta_{2n})^{a_{2n}}.
\end{equation}


\subsection{Dual canonical basis} 

Let 
$\mathbf{B} \subset U^{+}$ be the canonical basis of $U^{+}$ 
constructed in \cite[Part II]{Lus93}.
It is characterized up to sign as the set of elements $b \in U^{+}$ 
satisfying $\mathtt{b}(b) = b$ and $(b, b)_{L} \in 1 + q^{-1} \Z[\![ q^{-1} ]\!]$.
The dual canonical basis $\mathbf{B}^{*}$ is defined 
as the basis of $A$ dual to the canonical basis $\mathbf{B}$.
By definition, each element of $\mathbf{B}^{*}$ is fixed by the bar involution $\mathtt{b}^{*}$.

The following theorem due to Kimura~\cite{Kim12}
claims that the dual canonical basis 
is compatible with the quantum unipotent subgroup $A_{n}$
and characterized by using the dual PBW basis.  
\begin{Thm}[{\cite[Theorem 4.29]{Kim12}}] \label{Thm:Kimura}
For each $\beta \in \mathsf{Q}^{+}$,
there exists a unique $\Q(q)$-basis
$\mathscr{B}_{n}(\beta) = \{ B^{*}(\bfa) \mid \bfa \in \KP_{n}(\beta) \}$
of $(A_{n})_{\beta}$ characterized by the following properties:
\begin{itemize}
\item[(1)] $\mathtt{b}^{*}(
B^{*}(\bfa)) = B^{*}(\bfa)$;
\item[(2)] $B^{*}(\bfa) 
\in  E^{*}(\bfa) + \sum_{\bfa^{\prime}
\in \KP_{n}(\beta)} q^{-1} \Z[q^{-1}] E^{*}(\bfa^{\prime}).$
\end{itemize}
Moreover we have
$\mathscr{B}_{n}(\beta) = \mathbf{B}^{*} \cap (A_{n})_{\beta}.$ 
\end{Thm}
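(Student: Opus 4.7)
The plan is to deduce the existence and uniqueness of $\mathscr{B}_n(\beta)$ from the standard triangularization argument (Lusztig's Lemma) applied to the dual PBW basis of $(A_n)_\beta$, and then to identify the resulting basis with $\mathbf{B}^* \cap (A_n)_\beta$ by comparing with the standard characterization of the dual canonical basis.

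First, I would establish bar-invariance of the ambient subspace, i.e.\ $\mathtt{b}^*(A_n) = A_n$, by transferring through $\psi_L$ and checking that $\mathtt{b}(U^+_n) = U^+_n$. Although $\mathtt{b}$ does not commute with Lusztig's symmetries on the nose, the relation between $\mathtt{b}$ and the two conventions $T^{\prime}_{i,\pm 1}, T^{\prime\prime}_{i,\pm 1}$ for the braid operators interchanges these conventions in a controlled way, which preserves the span of the root vectors $E(\beta_k)$. Second, I would prove the triangularity
$$ \mathtt{b}^*(E^*(\bfa)) \in E^*(\bfa) + \sum_{\bfa' \prec \bfa} \Z[q^{\pm 1}]\, E^*(\bfa') $$
for a suitable partial order $\prec$ on $\KP_n(\beta)$ (the bilexicographic order works). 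The key input is the Levendorskii--Soibelman formula, which rewrites each reordered product $E(\beta_j) E(\beta_k)$ with $j > k$ as a linear combination of ordered monomials supported on indices strictly between $k$ and $j$. Combining this with \eqref{Eq:dual PBW} and the dual bar formula $\mathtt{b}^*(\theta_1 \theta_2) = q^{-(\beta_1, \beta_2)} \mathtt{b}^*(\theta_2) \mathtt{b}^*(\theta_1)$ produces the reversed product, and reordering via Levendorskii--Soibelman yields the claimed upper-unitriangularity with integral Laurent coefficients.

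With these inputs, Lusztig's Lemma applied to the $\Z[q^{-1}]$-lattice $\bigoplus_{\bfa} \Z[q^{-1}]\, E^*(\bfa) \subset (A_n)_\beta$ together with the involution $\mathtt{b}^*$ produces a unique bar-invariant family $\{ B^*(\bfa) \}$ satisfying property~(2); this settles the existence and uniqueness of $\mathscr{B}_n(\beta)$. For the final identification $\mathscr{B}_n(\beta) = \mathbf{B}^* \cap (A_n)_\beta$, I would invoke the compatibility of the global canonical basis $\mathbf{B}$ with the subalgebra $U^+_n$ in affine type (a theorem of Beck--Nakajima in type $\mathsf{A}^{(1)}_1$): the set $\mathbf{B} \cap U^+_n$ is a $\Q(q)$-basis of $U^+_n$. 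Dualizing via $\psi_L$, every element of $\mathbf{B}^* \cap (A_n)_\beta$ is bar-invariant under $\mathtt{b}^*$ and expands in the dual PBW basis with leading coefficient $1$ and remaining coefficients in $q^{-1} \Z[q^{-1}]$; by the uniqueness output of Lusztig's Lemma it must coincide with some $B^*(\bfa)$, and conversely each $B^*(\bfa)$ is in turn forced to lie in $\mathbf{B}^*$.

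The principal obstacle is the compatibility of the canonical basis with the subalgebra $U^+_n$: in affine type this is nontrivial and ultimately rests on the Beck--Nakajima identification of PBW and canonical bases for quantum affine algebras. Without this deep input, the Lusztig's Lemma step produces a basis satisfying (1) and (2), but its identification with the restriction of $\mathbf{B}^*$ is not automatic. The triangularity step is combinatorially delicate but reduces, by induction along the root order, to bookkeeping with the Levendorskii--Soibelman relations once the bar-invariance of $A_n$ has been secured.
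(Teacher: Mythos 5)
The paper does not prove this theorem; it simply imports it as \cite{Kim12}, Theorem~4.29, so there is no internal argument to compare against. Your outline is a reasonable reconstruction of Kimura's actual proof strategy: establishing bar-invariance of $U^+_n$ and hence of $A_n$, deducing upper-unitriangularity of $\mathtt{b}^*$ on the dual PBW basis via a Levendorskii--Soibelman straightening argument, invoking Lusztig's Lemma to manufacture the bar-invariant basis, and identifying it with $\mathbf{B}^* \cap (A_n)_\beta$ using a compatibility theorem for the canonical basis with the quantum unipotent subgroup. One correction on attribution: the compatibility result you need --- that $\mathbf{B} \cap U^+_n$ is a $\Q(q)$-basis of $U^+_n$ --- is not a Beck--Nakajima theorem. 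It is proved by Kimura himself in \cite{Kim12} (his Theorem~4.25 and surrounding material) via the crystal-theoretic identification of the crystal of $U^+(w)$ with the Demazure crystal $B_w(\infty) \subset B(\infty)$, valid for arbitrary symmetrizable Kac--Moody type; Beck--Nakajima's work concerns the full affine PBW (loop) basis including imaginary root vectors, which is a different statement. Apart from this misattribution of the key deep input, the structure of your argument agrees with the cited source, and the Lusztig's-Lemma portion is standard.
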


We refer to the basis $\bigsqcup_{\beta \in \mathsf{Q}^{+}} \mathscr{B}_{n}(\beta)$
as the dual canonical basis of $A_{n}$. 
We denote by
$
A_{n, \Z}
$
the integral form of $A_{n}$, i.e.~the $\Z[q^{\pm 1}]$-subalgebra of $A_{n}$ spanned by the dual canonical basis (or the dual PBW basis).

Let $\mathsf{P} = \Z \varpi_{0} \oplus \Z \varpi_{1}$
be the weight lattice of type $\mathsf{A}^{(1)}_{1}$ with 
$\varpi_{i}$ being the $i$-th fundamental weight ($i \in \{0,1\}$).
For a dominant weight $\lambda \in \mathsf{P}^{+} := \N \varpi_{0} + \N \varpi_{1}$ and 
Weyl group elements $u$ and $v$ satisfying $u(\lambda) - v(\lambda) \in \mathsf{Q}^{+}$, 
we denote by 
$D_{u(\lambda), v(\lambda)}$ 
the corresponding {\em quantum unipotent minor} (see~\cite[Section~5.2]{GLS13} for the definition).
This is an element of $\mathscr{B}_{n}(u(\lambda) - v(\lambda))$ (see \cite[Proposition~6.3]{GLS13}).  
For each $1 \le k \le 2n$, we set 
$w_{\le k} := s_{i_1} s_{i_2} \cdots s_{i_k}$
and 
$w_{< k} := s_{i_1} s_{i_2} \cdots s_{i_{k-1}}$.
For each $1 \le b \le d \le 2n$ with $d - b \in 2 \Z$, we define
$
D[b, d] := D_{w_{< b} \varpi_{i_b}, 
w_{\le d} \varpi_{i_d}}.
$
By \cite[Proposition~7.4]{GLS13}, we have
$
E^{*}(\beta_{k}) =
D[k, k]
$
for each $1 \le k \le 2n$.


\subsection{Quantum cluster structure}

Let $\mathscr{A}_{n}$ be the quantum cluster algebra (over $\Z[q^{\pm 1/2}]$)
associated with the initial quantum seed $\mathscr{S} = ((X_{1}, \ldots, X_{2n}), \widetilde{B}, \Lambda)$
defined as follows (see~\cite{BZ05} for the generalities of quantum cluster algebras).  
The exchange matrix $\widetilde{B}$ is the $2n \times (2n-2)$-matrix 
given by 
$$
\widetilde{B} :=
\left(
\begin{array}{cccccccc}
0 & -2 & 1 & & & & &\\
2 & 0 & -2 & 1 & & & &\\
-1 &  2 & 0 & -2 & 1 & & &\\
& -1 & 2 & 0 & -2 & 1 & &\\  
& & \ddots & \ddots & \ddots & \ddots & \ddots &\\
& & & -1 & 2 & 0 & -2 & 1\\
& & & & -1 & 2 & 0 & -2\\  
&&&&&-1 & 2 & 0 \\ \hline
&&&&&&-1 & 2  \\
&&&&&&& -1
\end{array}
\right),
$$
where all blank entries are $0$. 
The skew-symmetric 
$2n \times 2n$-matrix $\Lambda=(\Lambda_{k\ell})$ is given by
$
\Lambda_{k \ell} := 2\lceil k/2 \rceil (\lfloor k/2 \rfloor - \lfloor \ell/2 \rfloor)   
$
for any $1 \le k < \ell \le 2n$ (cf.~\cite[Lemma~6.4]{CW18}). 
The quantum cluster algebra $\mathscr{A}_{n}$ is a $\Z[q^{\pm 1/2}]$-subalgebra of the based quantum torus 
$\Z[q^{\pm 1/2}] \langle X_{k}^{\pm 1} \, (1 \le k \le 2n) \mid X_{k} X_{\ell} = q^{\Lambda_{k \ell}} X_{\ell} X_{k} \rangle$.   

For each $1 \le b \le d \le 2n$ with $d - b \in 2\Z$,
we consider the following
normalized element in $\Q(q^{1/2}) \otimes_{\Q(q)} A_{n}$: 
$$
\widetilde{D}[b,d]
:= q^{(\beta, \beta)/4}D[b, d],
$$
where $\beta = 
w_{< b} \varpi_{i_b} 
- w_{\le d} \varpi_{i_d}
$
is the weight of $D[b, d]$.
More generally, for any quantum unipotent minor $D_{u(\lambda), v(\lambda)}$, we define
$\widetilde{D}_{u(\lambda), v(\lambda)} := q^{(\beta, \beta)/4}D_{u(\lambda), v(\lambda)}$ with 
$\beta = u(\lambda) - v(\lambda) \in \mathsf{Q}^{+}$.

\begin{Thm}[{\cite[Theorem 12.3]{GLS13}, \cite[Corollary 11.2.8]{KKKO18}}] \label{Thm:GLS}
There is a unique isomorphism 
$$
\mathscr{A}_{n}
\xrightarrow{\sim}
\Z[q^{\pm 1/2}] \otimes_{\Z[q^{\pm 1}]} A_{n, \Z}
$$
under which the initial cluster variable $X_{k}$ corresponds to
$$
\begin{cases}
\widetilde{D}[1, k] & \text{if $k$ is odd;} \\
\widetilde{D}[2, k] & \text{if $k$ is even,} 
\end{cases}
$$
for each $1 \le k \le 2n$.
Moreover the quantum unipotent minor $\widetilde{D}[b, d]$ is the image of a cluster variable
for any $1 \le b \le d \le 2n$ with $d-b \in 2 \Z$. 
\end{Thm}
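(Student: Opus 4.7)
The plan is to deduce this theorem as a direct specialization of the general GLS quantum cluster structure on quantum unipotent coordinate rings, combined with the KKKO compatibility of cluster monomials with the dual canonical basis. Concretely, \cite{GLS13} Theorem 12.3 associates to each Weyl group element $w$ of a symmetrizable Kac-Moody algebra and a reduced expression $w = s_{i_1}\cdots s_{i_\ell}$ a quantum cluster algebra structure on an integral form of the quantum unipotent subgroup $A_w$; its initial seed is built from the normalized quantum unipotent minors attached to consecutive same-colored prefixes of the word. I would apply this with $w = w_n$ and the reduced expression $(s_0 s_1)^n$, so that $i_k = 0$ for $k$ odd and $i_k = 1$ for $k$ even. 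The initial cluster variable indexed by $k$ is then the minor attached to the sub-word starting at the first occurrence of the color $i_k$ and ending at position $k$, which is precisely $\widetilde{D}[1,k]$ for $k$ odd and $\widetilde{D}[2,k]$ for $k$ even.

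Next, I would check that the two combinatorial data of the GLS seed coincide with the explicit $\widetilde{B}$ and $\Lambda$ displayed in the statement. The exchange matrix entries depend only on the Cartan integers and on the adjacency pattern of equal-colored letters in the word; for the alternating word in type $\mathsf{A}_1^{(1)}$ the only nonzero $\widetilde{B}_{j,k}$ occur when $|j - k| \le 2$, and a direct match against the GLS recipe yields the band matrix above. For the quantization matrix, GLS give $\Lambda_{k\ell}$ as a pairing between weights of the two minors; substituting the explicit expressions for $w_{\le k}\varpi_{i_k}$ in terms of simple roots (iterating the reflection formulas for $s_0$ and $s_1$) collapses this pairing to the closed form $2\lceil k/2\rceil(\lfloor k/2\rfloor - \lfloor \ell/2\rfloor)$. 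This verification is exactly the content of \cite[Lemma~6.4]{CW18}, which I would cite rather than reproduce.

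Once these matches are verified, existence and uniqueness of the stated isomorphism follow from GLS, and \cite{KKKO18} Corollary 11.2.8 ensures that this cluster algebra realizes exactly $\Z[q^{\pm 1/2}]\otimes_{\Z[q^{\pm 1}]} A_{n,\Z}$, since the cluster monomials are shown there to be elements of the dual canonical basis of $A_n$. The final claim that every $\widetilde{D}[b,d]$ with $d - b \in 2\Z$ arises as a cluster variable is likewise part of GLS: it comes from iterating the quantum determinantal identities along the two "horizontal" chains of mutations in the seed, one for each choice of parity, producing all pairs of same-colored prefix/suffix positions.

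The main technical obstacle is the quantization step: GLS's abstract formula for $\Lambda$ must be unwound in affine type $\mathsf{A}_1^{(1)}$, where the fundamental weights $\varpi_0$ and $\varpi_1$ satisfy the affine relations rather than the finite-type ones, and verifying that the resulting bilinear pairings telescope to the displayed closed form is a bookkeeping exercise rather than a conceptual argument. Every other step is formal given the two cited theorems.
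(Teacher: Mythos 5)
Your proposal is essentially the same as the paper's: the theorem is stated with the citations $[\mathrm{GLS13}]$ Theorem 12.3 and $[\mathrm{KKKO18}]$ Corollary 11.2.8 and no further proof, and your argument is the natural unwinding of exactly those two references (together with \cite[Lemma~6.4]{CW18} for the quantization matrix, which the paper also uses). Specializing GLS's general seed construction to the word $(s_0 s_1)^n$ does produce the interval minors $D[1,k]$ (odd $k$) and $D[2,k]$ (even $k$) as initial cluster variables, the band structure of the exchange matrix and the closed form of $\Lambda$ are the bookkeeping you describe, and the remaining interval minors arise as cluster variables along the standard mutation chains, so the outline is sound. The one place where the division of labor between the two references is slightly blurred is the sentence attributing "realizes exactly $\Z[q^{\pm 1/2}]\otimes_{\Z[q^{\pm 1}]} A_{n,\Z}$" to KKKO: GLS13 Theorem 12.3 already asserts the equality of the quantum cluster algebra with the integral form of the quantum unipotent subgroup, while the contribution of \cite[Cor.\ 11.2.8]{KKKO18} is the quantum cluster monomial property (cluster monomials lie in the dual canonical basis), which removes the remaining conjectural hypotheses in GLS's treatment and is what the paper genuinely needs from that reference for the later comparison with $\mathscr{P}_n$. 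This is a minor misattribution, not a gap.
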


Henceforth, we will identify the quantum unipotent subgroup 
$\Z[q^{\pm 1/2}] \otimes_{\Z[q^{\pm 1}]} A_{n, \Z}$
with the quantum cluster algebra $\mathscr{A}_{n}$
via the isomorphism in Theorem~\ref{Thm:GLS}. 


\subsection{Berenstein-Zelevinsky's bar involution}
 \label{Ssec:BZ involution}

When $x, y$ are $q$-commuting with each other, say
$xy = q^{m}yx$ for some $m \in \Z$, we
write $x \odot y := q^{- m/2} xy$.
Note that we have $x \odot y = y \odot x$.  
Following Berenstein-Zelevinsky~\cite{BZ05}, let us consider
the algebra anti-involution $\iota$ of 
the quantum cluster algebra $\mathscr{A}_{n}$
defined by
$$\iota(q)=q^{-1}, \quad
\iota (X_{k}) = X_{k},$$
for all $
1 \le k \le 2n$. 
If $x, y \in \mathscr{A}_{n}$ are
$q$-commuting with each other and both are fixed by $\iota$,
the element $x \odot y = y \odot x$ 
is also fixed by $\iota$. 
In particular, any cluster variables and hence any 
cluster monomials are fixed by $\iota$.  

\begin{Lem}[cf.~{\cite[Remark 7.23]{KO17}}]
If $x \in \mathscr{A}_{n}$ 
is of weight $\beta$, we have
$\iota(x) = q^{(\beta, \beta)/2} \mathtt{b}^{*}(x)$.
In particular, $x$ is fixed by $ \mathtt{b}^{*}$
if and only if the rescaled element $q^{(\beta, \beta)/4} x$
is fixed by $\iota$.
\end{Lem}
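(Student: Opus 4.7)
The plan is to identify $\iota$ with the $\Q$-linear map $f\colon\mathscr{A}_n\to\mathscr{A}_n$ defined on a homogeneous element $x$ of weight $\beta$ by $f(x):=q^{(\beta,\beta)/2}\mathtt{b}^*(x)$. I would show that both $\iota$ and $f$ are $\Q$-algebra anti-involutions sending $q\mapsto q^{-1}$, and that they agree on a generating set of $A_n$; uniqueness of an anti-involution determined by its values on $q$ and on algebra generators then forces $\iota=f$.

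Verifying that $f$ is an anti-involution is a direct computation from the twisted formula $\mathtt{b}^*(\theta_1\theta_2)=q^{-(\beta_1,\beta_2)}\mathtt{b}^*(\theta_2)\mathtt{b}^*(\theta_1)$: expanding $q^{(\beta_1+\beta_2,\beta_1+\beta_2)/2}$ via polarization, the cross term $q^{(\beta_1,\beta_2)}$ exactly cancels the twist, yielding $f(xy)=f(y)f(x)$. Involutivity follows since $\mathtt{b}^{*2}=\mathrm{id}$ and $\mathtt{b}^*$ preserves weights. To reduce to a check on generators, note that $A_n$ is generated over $\Q(q)$ by the dual root vectors $\{E^*(\beta_k)\}_{k=1}^{2n}$: by construction $U_n^+$ is generated by $\{E(\beta_k)\}$, and $\psi_L$ carries each $E(\beta_k)$ to a nonzero scalar multiple of $E^*(\beta_k)$.

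It then suffices to verify $\iota(E^*(\beta_k))=f(E^*(\beta_k))$, and both sides turn out to equal $q^{(\beta_k,\beta_k)/2}E^*(\beta_k)$: on the $f$-side, $E^*(\beta_k)=D[k,k]\in\mathbf{B}^*$ (by the identification recalled from \cite{GLS13} together with Theorem~\ref{Thm:Kimura}), so $\mathtt{b}^*$ fixes $E^*(\beta_k)$; on the $\iota$-side, Theorem~\ref{Thm:GLS} shows that $\widetilde{D}[k,k]=q^{(\beta_k,\beta_k)/4}E^*(\beta_k)$ is a cluster variable, hence $\iota$-fixed by the Berenstein--Zelevinsky observation recalled in Section~\ref{Ssec:BZ involution}, and rearranging gives the desired identity. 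The ``in particular'' statement is then immediate: $\iota(q^{(\beta,\beta)/4}x)=q^{-(\beta,\beta)/4}\iota(x)=q^{(\beta,\beta)/4}\mathtt{b}^*(x)$, which equals $q^{(\beta,\beta)/4}x$ precisely when $\mathtt{b}^*(x)=x$.

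I do not anticipate a serious obstacle: this is essentially a uniqueness statement for anti-involutions, and the nontrivial inputs (Theorem~\ref{Thm:GLS}, the identity $E^*(\beta_k)=D[k,k]$, the inclusion $\mathscr{B}_n\subset\mathbf{B}^*$, and the cluster-variable invariance under $\iota$) are all available in the excerpt. The only care needed is bookkeeping of $q$-prefactors; it helps that $(\beta,\beta)/2\in\Z$ for every $\beta\in\mathsf{Q}^+$ (since $(\beta,\beta)=2(a-b)^2$ for $\beta=a\alpha_0+b\alpha_1$), so the definition of $f$ introduces only integer powers of $q$ and the comparison stays inside $\mathscr{A}_n$.
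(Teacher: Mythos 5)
The paper does not actually prove this lemma; it delegates to Remark~7.23 of \cite{KO17}. Your self-contained argument is correct and natural, and all the ingredients you invoke are indeed available in the text. The computation showing $f(x):=q^{(\beta,\beta)/2}\mathtt{b}^*(x)$ is an anti-homomorphism is exactly the polarization/cancellation you describe; the reduction to generators is sound because $\psi_L$ is a $\Q(q)$-algebra isomorphism, so $A_n=\psi_L(U_n^+)$ is $\Q(q)$-generated by $\psi_L(E(\beta_k))$, which are nonzero scalar multiples of $E^*(\beta_k)$; and both $\iota$ and $f$ scale $E^*(\beta_k)$ by $q^{(\beta_k,\beta_k)/2}=q$ (every $\beta_k$ is a real root, so $(\beta_k,\beta_k)=2$), using on one side that $D[k,k]\in\mathscr{B}_n(\beta_k)\subset\mathbf{B}^*$ is $\mathtt{b}^*$-fixed (Theorem~\ref{Thm:Kimura}) and on the other that $\widetilde{D}[k,k]$ is a cluster variable (Theorem~\ref{Thm:GLS}) hence $\iota$-fixed. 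The one point worth making explicit when writing this up: the identification $\iota=f$ should be carried out after extending both anti-involutions $\Q$-semilinearly to $\Q(q^{1/2})\otimes_{\Q(q)}A_n$ (where the $E^*(\beta_k)$ generate over $\Q(q^{1/2})$), and then restricted to $\mathscr{A}_n$; as you observe, $(\beta,\beta)/2\in\Z$ so $f$ does preserve $\mathscr{A}_n$. Also, strictly the ``uniqueness'' you want is for anti-homomorphisms, not anti-involutions — involutivity of $f$ is true but not needed for the comparison; what matters is that $\iota\circ f^{-1}$ is a $\Q(q^{1/2})$-linear algebra automorphism fixing generators, hence the identity.
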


For any $\beta \in \mathsf{Q}^{+}$ and $\bfa \in \KP_{n}(\beta)$, we consider the rescaled elements
$$
\widetilde{B}(\bfa) := 
q^{(\beta, \beta)/4} B^{*}(\bfa), 
\quad
\widetilde{E}(\bfa) := 
q^{(\beta, \beta)/4} E^{*}(\bfa).
$$
Theorem~\ref{Thm:Kimura} yields the following
characterization of the rescaled dual canonical basis.
\begin{Cor}
 \label{Cor:characterization}
The basis $\{ \widetilde{B}(\bfa) \mid
\bfa \in \KP_{n}(\beta) \}$ of $(\mathscr{A}_{n})_{\beta}$ is characterized by the following 
properties:
\begin{enumerate}
\item 
$\iota(\widetilde{B}(\bfa)) = 
\widetilde{B}(\bfa)$;
\item
$\widetilde{B}(\bfa) 
\in  \widetilde{E}(\bfa) + \sum_{\bfa^{\prime} \in \KP_{n}(\beta)} q^{-1} \Z[q^{-1}] \widetilde{E}(\bfa^{\prime}).$
\end{enumerate}
\end{Cor}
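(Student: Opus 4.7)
The plan is to transport Kimura's characterization in Theorem~\ref{Thm:Kimura} across the uniform rescaling $x \mapsto q^{(\beta,\beta)/4} x$ on the weight space $(A_{n})_{\beta}$. The decisive observation is that every element appearing in either condition of Theorem~\ref{Thm:Kimura} lives in $(A_{n})_{\beta}$: both $B^{*}(\bfa)$ and all the $E^{*}(\bfa')$ indexed by $\bfa' \in \KP_{n}(\beta)$ have the same weight $\beta$, so multiplication by $q^{(\beta,\beta)/4}$ is a bijection of $(A_n)_\beta$ onto $(\mathscr A_n)_\beta$ that respects both conditions.

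For property (1), I would apply the preceding Lemma (\S\ref{Ssec:BZ involution}) to $x = B^{*}(\bfa)$, which has weight $\beta$: this Lemma says exactly that $\mathtt{b}^{*}(B^{*}(\bfa)) = B^{*}(\bfa)$ if and only if $\iota(\widetilde{B}(\bfa)) = \widetilde{B}(\bfa)$. So condition (1) of Theorem~\ref{Thm:Kimura} is equivalent to condition (1) of the corollary. For property (2), I would simply multiply the relation
$$
B^{*}(\bfa) \in E^{*}(\bfa) + \sum_{\bfa' \in \KP_{n}(\beta)} q^{-1}\Z[q^{-1}]\, E^{*}(\bfa')
$$
by the scalar $q^{(\beta,\beta)/4}$; since this scalar is uniform in $\bfa'$, it can be absorbed into each term to produce
$$
\widetilde{B}(\bfa) \in \widetilde{E}(\bfa) + \sum_{\bfa' \in \KP_{n}(\beta)} q^{-1}\Z[q^{-1}]\, \widetilde{E}(\bfa'),
$$
which is condition (2) of the corollary. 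Conversely, any element of $(\mathscr A_n)_\beta$ satisfying (1) and (2) of the corollary is the rescaling of an element of $(A_n)_\beta$ satisfying (1) and (2) of Theorem~\ref{Thm:Kimura}, hence unique.

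There is no real obstacle here; the corollary is a direct repackaging of Theorem~\ref{Thm:Kimura} through the Lemma of \S\ref{Ssec:BZ involution}. The only point that requires a moment's attention is the observation that all summands sit in the single weight component $(A_n)_\beta$, which is why one common rescaling factor $q^{(\beta,\beta)/4}$ suffices to convert $\mathtt{b}^{*}$-invariance into $\iota$-invariance while preserving the triangular relation with the dual PBW basis verbatim.
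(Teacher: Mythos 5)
Your proposal is correct and follows exactly the route the paper intends: the Corollary is stated immediately after the Lemma of \S\ref{Ssec:BZ involution} with the phrase ``Theorem~\ref{Thm:Kimura} yields \dots'', which is precisely your argument that the uniform rescaling by $q^{(\beta,\beta)/4}$ on the weight-$\beta$ component translates $\mathtt{b}^*$-invariance into $\iota$-invariance and carries the dual-PBW triangularity over verbatim. Nothing to add.
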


Note that in particular the rescaled dual root vector is 
$$\widetilde{E}(\beta_k) = q^{1/2}E^{*}(\beta_k) = \widetilde{D}[k,k]$$
for each $1 \le k \le 2n$.
In terms of the rescaled elements,
the expression~(\ref{Eq:dual PBW}) is rewritten as 
\begin{equation} \label{Eq:rescaled dual PBW}
\widetilde{E}(\bfa) = q^{\sum_{k < \ell} a_{k}a_{\ell}} \widetilde{E}(\beta_{1})^{a_{1}} 
\widetilde{E}(\beta_{2})^{a_2}
\cdots \widetilde{E}(\beta_{2n})^{a_{2n}}
\end{equation}
for each $\bfa \in \KP_{n}(\beta)$.


\subsection{Localization}

Note that the frozen variables of the quantum cluster algebra $\mathscr{A}_{n}$ 
are the following two elements:
$$
D_{0} := X_{2n-1} = \widetilde{D}_{\varpi_{0}, w_{n} \varpi_{0}},
\quad
D_{1} := X_{2n} = \widetilde{D}_{\varpi_{1}, w_{n} \varpi_{1}}.
$$

\begin{Prop}[cf.~{\cite[Proposition 4.2]{KO17}}]
\label{Prop:frozen}
For each $\lambda \in \mathsf{P}^{+}$, the unipotent quantum minor 
$D_{\lambda, w_{n} \lambda}$ is $q$-central in $A_{n}$.
More precisely, for any homogeneous element $x \in (\mathscr{A}_{n})_{\beta}$ of weight $\beta \in \mathsf{Q}^{+}$,
we have 
$
D_{\lambda, w_{n} \lambda} x = q^{-(\lambda + w_{n} \lambda, \beta)} x D_{\lambda, w_{n} \lambda}.
$
Moreover, for $\lambda, \lambda^{\prime} \in \mathsf{P}^{+}$, we have
$
\widetilde{D}_{\lambda, w_{n} \lambda} \odot \widetilde{D}_{\lambda^{\prime}, w_{n} \lambda^{\prime}} 
= \widetilde{D}_{\lambda + \lambda^{\prime}, w_{n}(\lambda + \lambda^{\prime})}
$
in $\mathscr{A}_{n}$. In particular, we have
$
\widetilde{D}_{\lambda, w_{n}\lambda} = D_{0}^{\odot \ell_{0}} \odot D_{1}^{\odot \ell_{1}}
$
for $\lambda = \ell_{0} \varpi_{0} + \ell_{1} \varpi_{1} \in \mathsf{P}^{+}$.
\end{Prop}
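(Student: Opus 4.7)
For part~(1), the strategy is first to reduce to the fundamental weights $\lambda = \varpi_0, \varpi_1$, where $D_0 = X_{2n-1}$ and $D_1 = X_{2n}$ are the frozen variables of $\mathscr{A}_n$. Being frozen, they $q$-commute with every cluster variable in every seed of $\mathscr{A}_n$; since $\mathscr{A}_n$ is generated as an algebra by the initial seed $(X_1, \ldots, X_{2n})$, this forces $D_i$ to $q$-commute with \emph{every} homogeneous element $x \in \mathscr{A}_n$, with a commutation scalar depending only on the weight $\beta$ of $x$. To identify this scalar as $q^{-(\varpi_i + w_n \varpi_i, \beta)}$, it suffices to check it on the generators $E^*(\beta_k) = D[k,k]$, which reduces to comparing the matrix entries $\Lambda_{k, 2n-1}$ and $\Lambda_{k, 2n}$ with the explicit pairings $-(\varpi_i + w_n \varpi_i, \beta_k)$ via $\beta_k = k\alpha_0 + (k-1)\alpha_1$; this is a routine bookkeeping (cf.~\cite[Lemma~6.4]{CW18}). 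The case of general $\lambda = \ell_0 \varpi_0 + \ell_1 \varpi_1$ then follows once part~(3) is in hand, because the commutation scalars multiply additively in the weight.

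For part~(2), the key observation is that the characterization of the rescaled dual canonical basis in Corollary~\ref{Cor:characterization} reduces everything to checking $\iota$-invariance together with agreement of the leading rescaled dual PBW coefficient. By part~(1) (fundamental case), $D_{\lambda, w_n \lambda}$ and $D_{\lambda', w_n \lambda'}$ are mutually $q$-commuting, so $\widetilde{D}_{\lambda, w_n \lambda} \odot \widetilde{D}_{\lambda', w_n \lambda'}$ is a well-defined $\odot$-product of two $\iota$-fixed cluster monomials, hence is itself $\iota$-invariant (by the discussion opening Section~\ref{Ssec:BZ involution}). On the other hand, $\widetilde{D}_{\lambda+\lambda', w_n(\lambda+\lambda')}$ is a cluster monomial by Theorem~\ref{Thm:GLS}, hence $\iota$-invariant as well. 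Both sides lie in $(\mathscr{A}_n)_{(\lambda+\lambda') - w_n(\lambda+\lambda')}$. To match their leading coefficients, one may pass to the initial quantum torus where the relevant frozen cluster variables are manifest, or equivalently invoke the quantum Pl\"ucker identity $D_{\lambda, w_n \lambda} D_{\lambda', w_n \lambda'} = q^{m(\lambda, \lambda')} D_{\lambda+\lambda', w_n(\lambda+\lambda')}$ with an explicit $m(\lambda, \lambda')$ and verify that the $q^{(\beta, \beta)/4}$ rescalings absorb $m(\lambda, \lambda')$ exactly into the $\odot$ normalization.

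Part~(3) is then an immediate induction on $\ell_0 + \ell_1$: starting from $\widetilde{D}_{\varpi_i, w_n \varpi_i} = D_i$ and iterating part~(2) yields the factorization $\widetilde{D}_{\lambda, w_n \lambda} = D_0^{\odot \ell_0} \odot D_1^{\odot \ell_1}$. The main technical obstacle in the whole proposition is the verification of part~(1) for fundamental weights, namely the consistency between the cluster quantization matrix $\Lambda$ and the intrinsic weight pairing $-(\varpi_i + w_n \varpi_i, -)$; once this compatibility is established, parts~(2) and~(3) follow as essentially formal consequences of Corollary~\ref{Cor:characterization} and the general quantum cluster theory.
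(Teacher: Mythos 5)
The paper itself offers no proof of this proposition --- it simply cites~\cite[Proposition~4.2]{KO17}, where the $q$-centrality and multiplicativity of the minors $D_{\lambda,w\lambda}$ are established directly from the theory of quantum unipotent minors (following~\cite{GLS13} and Kimura's work). So your proof cannot be compared against the paper's; it has to stand on its own.

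As written it has a structural circularity. You derive part~(1) for general $\lambda$ ``once part~(3) is in hand,'' derive part~(3) by iterating part~(2), and in part~(2) you invoke part~(1) to conclude that $D_{\lambda,w_n\lambda}$ and $D_{\lambda',w_n\lambda'}$ are mutually $q$-commuting --- but for $\lambda,\lambda'$ not fundamental this is exactly what has yet to be shown. This can be repaired by organizing everything as a single induction on $\ell_0+\ell_1$ (at each step multiplying by one fundamental minor, so part~(1) is only ever needed for the inductive hypothesis and for $\varpi_0,\varpi_1$), but you should say so explicitly; in the current form the logical order of (1)--(3) is ill-founded.

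The more substantive gap is in part~(2). After observing that both sides are $\iota$-fixed and of the same weight, you appeal to ``the quantum Pl\"ucker identity $D_{\lambda,w_n\lambda}D_{\lambda',w_n\lambda'}=q^{m}D_{\lambda+\lambda',w_n(\lambda+\lambda')}$'' to match leading coefficients. But that identity \emph{is} the multiplicativity you are trying to prove; invoking it as an available fact makes the argument circular. Corollary~\ref{Cor:characterization} by itself does not conclude, because $\iota$-invariance plus correct weight does not single out a basis element unless you also establish the unitriangular expansion in the $\widetilde{E}(\bfa)$'s (or unless the weight space is one-dimensional). Two honest routes exist: either cite the multiplicativity of quantum unipotent minors directly from \cite{GLS13} or \cite{Kim12} as the external input, as the paper implicitly does via \cite{KO17}; or observe that $\lambda-w_n\lambda$ is an extremal weight so that $(A_n)_{\lambda-w_n\lambda}$ is one-dimensional, whence any two $\iota$-fixed, suitably normalized elements of that weight space agree. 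The sketch for part~(1) (matching the column of $\Lambda$ with $-(\varpi_i+w_n\varpi_i,\beta_k)$) is fine in principle, but you should justify why the resulting $q$-commutation exponent on the quantum torus depends only on the weight $\beta$; this is a compatibility between $\Lambda$ and the $\mathsf{Q}^+$-grading that does hold here but is not automatic from the general cluster axioms.
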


By Proposition~\ref{Prop:frozen},
the set $\mathcal{D}_{n} := \{ q^{m/2} D_{0}^{\ell_{0}} D_{1}^{\ell_{1}} \mid m \in \Z,
\ell_{i} \in \N \}$ is an Ore set of the algebra $\mathscr{A}_{n}$.
The localized algebra
$\mathscr{A}_{n}^{loc} := \mathscr{A}_{n}[\mathcal{D}_{n}^{-1}]$ is (isomorphic to) the 
{\em quantum unipotent cell} associated with $w_{n}$ (cf.~\cite[Section 4]{KO17}).

\begin{Prop}[cf.~{\cite[Proposition 4.5]{KO17}}]
The set 
$$
\widetilde{\mathscr{B}}_{n}^{loc} :=
\{ \widetilde{B}(\bfa) \odot D_{0}^{\odot (-\ell_{0})} \odot D_{1}^{\odot (-\ell_{1})} \mid
\bfa \in \N^{2n}, \ell_{i} \in \N \}
$$
forms a $\Z[q^{\pm 1/2}]$-basis of the quantum unipotent cell $\mathscr{A}_{n}^{loc}$.
\end{Prop}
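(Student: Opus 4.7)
The proof follows closely that of \cite[Proposition 4.5]{KO17}, adapted to the present $\mathsf{A}_{1}^{(1)}$-setup. I would split the verification into the two defining properties of a $\Z[q^{\pm 1/2}]$-basis: spanning and linear independence.

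Spanning is routine. Since $D_{0}, D_{1}$ are $q$-central in $\mathscr{A}_{n}$ (Proposition~\ref{Prop:frozen}), any element of $\mathscr{A}_{n}^{loc}$ admits a presentation $x \odot D_{0}^{\odot(-\ell_{0})} \odot D_{1}^{\odot(-\ell_{1})}$ with $x \in \mathscr{A}_{n}$ and $\ell_{0}, \ell_{1} \in \N$; expanding $x = \sum c_{\bfa} \widetilde{B}(\bfa)$ in the rescaled dual canonical basis of $\mathscr{A}_{n}$ (Theorem~\ref{Thm:Kimura} together with Corollary~\ref{Cor:characterization}) exhibits the element as a $\Z[q^{\pm 1/2}]$-combination of elements of $\widetilde{\mathscr{B}}_{n}^{loc}$.

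The real content is the following multiplicative-compatibility lemma, which I would prove as a separate statement: for each $\bfa \in \N^{2n}$ and each $i \in \{0,1\}$, the product $\widetilde{B}(\bfa) \odot D_{i}$ is again of the form $\widetilde{B}(\bfa^{(i)})$ for a unique $\bfa^{(i)} \in \N^{2n}$ (with \emph{no} $q^{1/2}$-correction), and the assignment $\bfa \mapsto \bfa^{(i)}$ is injective. The fact that the product is a $q^{1/2}$-scalar multiple of a single element of $\{\widetilde{B}(\bfa')\}$ would be drawn from the monoidal categorification of $A_{n}$ by cyclotomic quiver Hecke algebras in \cite{KKKO18}: the classes $D_{0}$ and $D_{1}$ correspond to $q$-central real simple modules, and convolution with such a simple preserves simplicity, hence preserves the dual canonical basis up to a power of $q^{1/2}$. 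To upgrade this to an exact equality, I would note that both sides are fixed by $\iota$ (by Corollary~\ref{Cor:characterization} for $\widetilde{B}(\bfa)$ and by realness of the frozen cluster monomials $D_{i}$, and because $\odot$ of two $\iota$-invariant $q$-commuting factors is $\iota$-invariant by a direct check using that $\iota$ is an anti-involution); since $q^{m/2} \widetilde{B}(\bfa^{(i)})$ is $\iota$-fixed only when $m = 0$, the $q$-shift vanishes. Injectivity of $\bfa \mapsto \bfa^{(i)}$ is then immediate because right multiplication by $D_{i}$ is injective.

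Given the lemma, linear independence is a denominator-clearing argument. If $\sum_{j} c_{j} E_{j} = 0$ in $\mathscr{A}_{n}^{loc}$ is a finite relation with pairwise distinct $E_{j} = \widetilde{B}(\bfa_{j}) \odot D_{0}^{\odot(-\ell_{0,j})} \odot D_{1}^{\odot(-\ell_{1,j})}$, I would set $L_{i} := \max_{j} \ell_{i, j}$ and multiply the relation on the right by $D_{0}^{\odot L_{0}} \odot D_{1}^{\odot L_{1}}$. Iterating the lemma rewrites each term as $c_{j} \widetilde{B}(\bfa_{j}^{*})$ for some $\bfa_{j}^{*} \in \N^{2n}$, and the $\bfa_{j}^{*}$'s must be pairwise distinct (else cancellation by $D_{0}^{\odot L_{0}} \odot D_{1}^{\odot L_{1}}$ would contradict distinctness of the $E_{j}$'s). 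The resulting relation $\sum_{j} c_{j} \widetilde{B}(\bfa_{j}^{*}) = 0$ in $\mathscr{A}_{n}$ then forces all $c_{j} = 0$ by Theorem~\ref{Thm:Kimura}.

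The main obstacle is the multiplicative-compatibility lemma: specifically, showing that convolution with the $q$-central frozens $D_{0}, D_{1}$ sends the rescaled dual canonical basis to itself as a \emph{set}, not merely to its $\Z[q^{\pm 1/2}]$-span. This is the substantive categorical input imported from \cite{KKKO18}; the $\iota$-invariance refinement that removes the $q^{1/2}$-factor is elementary, and the rest of the argument is formal Ore-localization bookkeeping.
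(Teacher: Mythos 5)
Your proposal is correct, and the overall architecture (spanning plus a denominator-clearing argument for independence, both resting on a multiplicative-compatibility lemma for the frozens) is exactly what the cited reference is built around. The paper itself gives no proof of this proposition --- it is stated with the citation ``cf.\ [KO17] Proposition 4.5'' and nothing more --- so the comparison is really to that reference rather than to an in-text argument. The one genuine point of divergence is how you source the key lemma: you derive the stability of the dual canonical basis under $\odot D_{i}$ from the monoidal categorification of \cite{KKKO18} (frozens as $q$-central real simple modules, convolution with which preserves simplicity), whereas Kimura--Oya's route goes through Kimura's purely algebraic compatibility theorem (\cite{Kim12}, in the neighbourhood of Theorems~6.25--6.27) that multiplication by $D_{\lambda, w\lambda}$ permutes the dual canonical basis up to an integral power of $q$; the remaining $\iota$-invariance normalization step is the same in both treatments. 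Your categorical route buys a more conceptual explanation of \emph{why} simplicity is preserved, at the cost of invoking the full KLR categorification of $A_{n}$ and of having to argue separately that $D_{0}, D_{1}$ really do correspond to $q$-central modules in $\mathcal{C}_{w_{n}}$; Kimura's algebraic route is more self-contained and needs only the PBW/bar characterization already recalled in Theorem~\ref{Thm:Kimura} and Corollary~\ref{Cor:characterization}. One small caveat worth flagging: when you assert that $\widetilde{B}(\bfa)\odot D_{i}$ equals $q^{m/2}\widetilde{B}(\bfa^{(i)})$ for some $m\in\Z$ before applying $\iota$-invariance, you implicitly use that the half-integer $q$-powers introduced by the $q^{(\beta,\beta)/4}$ rescalings and the $\odot$-normalization recombine to an integer power --- this is a short weight computation (using the $q$-commutation exponent from Proposition~\ref{Prop:frozen}) that should be spelled out, since otherwise the conclusion ``$m=0$'' could in principle hide a residual $q^{1/2}$.
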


Note that each element in $\widetilde{\mathscr{B}}_{n}^{loc}$ is fixed by the bar involution $\iota$.

\section{Perverse coherent sheaves on type $A$ affine Grassmannian}


\subsection{Affine Grassmannian}

Let $T_{n} \subset GL_{n}(\C)$ be the maximal torus 
consisting of diagonal matrices and
$P^{\vee} := \Hom(\C^{\times}, T_{n})$
be the coweight lattice. 
We make the standard identification $P^{\vee} = \Z^{n}$ under which  
the element $\nu = (\nu_{1}, \ldots, \nu_{n}) \in \Z^{n}$ 
corresponds to the $1$-parameter group $a \mapsto \diag(a^{\nu_1}, \ldots, a^{\nu_n})$.
The weight lattice $P = \Hom_{\Z}(P^{\vee}, \Z)$ is also identified with $\Z^{n}$ 
via the standard pairing $\langle - , - \rangle : \Z^{n} \times \Z^{n} \to \Z$ given by
$\langle \nu, \mu \rangle = \nu_{1} \mu_{1} + \cdots + \nu_{n} \mu_{n}$.
We say that an element $\nu = (\nu_{1}, \ldots, \nu_{n}) \in \Z^{n}$ 
is dominant if $\nu_{1} \ge \cdots \ge \nu_{n}$. Write $P^{\vee}_{+}$ for 
the set of dominant coweights.
For each $1 \le k \le n$, we define 
$$
\omega_{k} := (\overbrace{1, \ldots, 1}^{k}, 0, \ldots, 0) \in \Z^{n},
$$  
which is regarded as the $k$-th fundamental  weight or coweight.

Let 
$\Kk := \C(\!( t )\!) \supset \Oo := \C[\![ t ]\!]$
be the field of formal Laurent series and 
its subring of formal power series. 
We consider the affine Grassmannian of $GL_{n}$:
$$
\Gr_{GL_{n}} = GL_{n}(\Kk) / GL_{n}(\Oo) = (GL_{n}(\Kk)\rtimes \C^{\times}) / (GL_{n}(\Oo)\rtimes \C^{\times}),
$$
where $\C^{\times}$ denotes the $4$-fold cover of the standard loop rotation.
More precisely, we have $(1, a) \cdot (g(t), 1) = (g(a^{4}t), a)$ in $GL_{n}(\Oo) \rtimes \C^{\times}$ 
for $g(t) \in GL_{n}(\Oo),\ a \in \C^{\times}$. 
The affine Grassmannian $\Gr_{GL_{n}}$ decomposes into the union of 
$GL_{n}(\Oo)$-orbits ($= GL_{n}(\Oo) \rtimes \C^{\times}$-orbits):
$$
\Gr_{GL_{n}} = \bigsqcup_{\nu \in P^{\vee}_{+}} \Gr_{GL_{n}}^{\nu},
$$
where $\Gr_{GL_{n}}^{\nu}$ denotes the orbit of 
$[t^{\nu}] \in \Gr_{GL_{n}}$,
$t^{\nu} := \mathrm{diag}(t^{\nu_{1}}, \ldots, t^{\nu_{n}}) \in GL_{n}(\Kk)$.
For each $\nu = (\nu_{1}, \ldots, \nu_{n}) \in P^{\vee}_{+}$,
we have $\dim \Gr_{GL_{n}}^{\nu} = \sum_{k=1}^{n} (n+1 -2k)\nu_{k}$.

The closure $\overline{\Gr}{}_{GL_{n}}^{\nu}$ of the orbit $\Gr_{GL_{n}}^{\nu}$ is called the {\it Schubert variety}. 
Let us consider the derived category 
$D_{coh}^{GL_{n}(\Oo) \rtimes \C^{\times}}(\overline{\Gr}{}_{GL_{n}}^{\nu})$ 
of bounded $GL_{n}(\Oo)\rtimes \C^{\times}$-equivariant complexes of sheaves on the reduced scheme 
$(\overline{\Gr}{}_{GL_{n}}^{\nu})^{red}$ with coherent cohomologies, 
formally supported in cohomological degrees $\frac{1}{2} \dim \Gr_{GL_{n}}^{\nu} + \Z$ by convention.

The connected components of $\Gr_{GL_{n}}$ are labeled by $\Z$.
For each $d \in \Z$, the $d$-th connected component $\Gr_{GL_{n}}^{(d)}$ is
the union of $GL_{n}(\Oo)$-orbits $\Gr_{GL_n}^{\nu}$ with 
$d = \nu_{1}+ \cdots + \nu_{n}$.
Note that the parity of $\dim \Gr_{GL_n}^{\nu}$ is constant
on each connected component.    
Therefore we can define 
$$D_{coh}^{GL_{n}(\Oo)\rtimes \C^{\times}}(\Gr_{GL_{n}}) 
:= \varinjlim_{\nu} D_{coh}^{GL_{n}(\Oo) \rtimes \C^{\times}}(\overline{\Gr}{}_{GL_{n}}^{\nu}).$$


\subsection{Convolution product}

For any objects $\mathcal{F}, \mathcal{G} \in D_{coh}^{GL_{n}(\Oo)\rtimes \C^{\times}}(\Gr_{GL_{n}})$,
we can define 
their {\em convolution product} $\mathcal{F} * \mathcal{G} \in D_{coh}^{GL_{n}(\Oo)\rtimes \C^{\times}}(\Gr_{GL_{n}})$ by
$$
\mathcal{F} * \mathcal{G} := \overline{m}_{*}( \mathcal{F} \, \wt{\boxtimes} \, \mathcal{G}), 
$$ 
where $\overline{m} : (GL_{n}(\Kk) \rtimes \C^{\times}) \times^{(GL_{n}(\Oo) \rtimes \C^{\times})} \Gr_{GL_{n}} \to \Gr_{GL_{n}}$ is the multiplication map.
Here the sheaf $\mathcal{F} \, \wt{\boxtimes} \, \mathcal{G}$ is defined by the property
$\overline{q}{}^{*} (\mathcal{F} \, \wt{\boxtimes} \, \mathcal{G})  \cong
\overline{p}{}^{*} (\mathcal{F} \, {\boxtimes} \, \mathcal{G})$,
where $\overline{p}$ and $\overline{q}$ are the natural projections: 
$$
\Gr_{GL_{n}} \times \Gr_{GL_{n}} \xleftarrow{\overline{p}} 
(GL_{n}(\Kk) \rtimes \C^{\times} ) \times \Gr_{GL_{n}} \xrightarrow{\overline{q}} 
(GL_{n}(\Kk) \rtimes \C^{\times}) \times^{GL_{n}(\Oo) \rtimes \C^{\times}} \Gr_{GL_{n}}.   
$$ 
If $\mathcal{F}$ (resp.~$\mathcal{G}$) is supported on $\overline{\Gr}{}_{GL_n}^{\nu}$ 
(resp.~$\overline{\Gr}{}_{GL_n}^{\nu^{\prime}}$), the sheaf $\mathcal{F} \, \wt{\boxtimes} \, \mathcal{G}$
is supported on the finite-dimensional {\em convolution variety} 
$$\overline{\Gr}{}_{GL_n}^{\nu}
\wt{\times} \overline{\Gr}{}_{GL_n}^{\nu^{\prime}} \subset GL_{n}(\Kk) \times^{GL_{n}(\Oo)} \Gr_{GL_{n}}$$
and the convolution product $\mathcal{F} * \mathcal{G}$ is supported on 
$\overline{\Gr}{}_{GL_n}^{\nu + \nu^{\prime}}$.

The convolution product $*$ equips the equivariant $K$-group 
$K^{GL_{n}(\Oo) \rtimes \C^{\times}}(\Gr_{GL_{n}})$ with a structure of an
associative $\Z[q^{\pm 1/2}]$-algebra,   
where $q^{m/2} \in K^{GL_{n}(\Oo) \rtimes \C^{\times}}(\mathrm{pt})$
denotes the class of the pull-back of the $1$-dimensional $\C^{\times}$-module $\C_m$
of weight $m$ along the natural projection $GL_{n}(\Oo) \rtimes \C^{\times} \to \C^{\times}$.  

We use the notation $\{ m/2 \}$ to denote the $\C^{\times}$-equivariant twist $-\otimes \C_{-m}$. 
Thus, for an object $\mathcal{F} \in D_{coh}^{GL_{n}(\Oo)\rtimes \C^{\times}}(\Gr_{GL_{n}})$, 
we have $[\mathcal{F} \{ m/2\}] = q^{-m/2} [\mathcal{F}]$.
On the other hand, we denote by 
$[m/2]$ the cohomological degree shift by $m/2 \in\frac12\Z$. 
It will be convenient to use the notation 
$\langle m/2 \rangle := [m/2]\{-m/2\}$
for the simultaneous shift and twist by $m/2 \in\frac12\Z$.
This is the same notation as in~\cite{CW18}.



\subsection{Perverse coherent sheaves}

\begin{Def}
A $GL_{n}(\Oo) \rtimes \C^{\times}$-equivariant {\em perverse coherent sheaf} on $\Gr_{GL_n}$ is 
an object $\mathcal{F} \in D_{coh}^{GL_{n}(\Oo)\rtimes \C^{\times}}(\Gr_{GL_{n}})$ such that
for every orbit $i_{\nu} \colon  \Gr_{GL_n}^{\nu} \hookrightarrow \Gr_{GL_n}$
\begin{itemize}
\item[(1)] $i_{\nu}^{*}\mathcal{F} \in D_{qcoh}^{GL_{n}(\Oo)\rtimes \C^{\times}}(\Gr_{GL_{n}}^{\nu})$
is supported in degrees $\le -\frac{1}{2} \dim \Gr_{GL_{n}}^{\nu}$;
\item[(2)] $i_{\nu}^{!}\mathcal{F} \in D_{qcoh}^{GL_{n}(\Oo)\rtimes \C^{\times}}(\Gr_{GL_{n}}^{\nu})$
is supported in degrees $\ge -\frac{1}{2} \dim \Gr_{GL_{n}}^{\nu}$.
\end{itemize}
We denote by $\Perv_{coh}^{GL_{n}(\Oo) \rtimes \C^{\times}}(\Gr_{GL_{n}}) \subset D_{coh}^{GL_{n}(\Oo)\rtimes \C^{\times}}(\Gr_{GL_{n}})$ 
the full subcategory
of perverse coherent sheaves.
\end{Def}

The category $\Perv_{coh}^{GL_{n}(\Oo) \rtimes \C^{\times}}(\Gr_{GL_{n}})$ can be
obtained as the core of a finite-length $t$-structure (called the {\em perverse $t$-structure})
of the category $D_{coh}^{GL_{n}(\Oo)\rtimes \C^{\times}}(\Gr_{GL_{n}})$
(cf.~\cite{AB10}).
The convolution product $*$ preserves the category $\Perv_{coh}^{GL_{n}(\Oo) \rtimes \C^{\times}}(\Gr_{GL_{n}})$
and
the operation $(\mathcal{F}, \mathcal{G}) \mapsto \mathcal{F} * \mathcal{G}$ is bi-exact
(cf.~\cite{BFM05}).
Thus the equivariant $K$-group 
$
K^{GL_{n}(\Oo)\rtimes \C^{\times}}(\Gr_{GL_{n}}) = 
K(\Perv_{coh}^{GL_{n}(\Oo) \rtimes \C^{\times}}(\Gr_{GL_{n}}))
$
becomes an algebra with a canonical $\Z$-basis 
formed by the classes of simple perverse coherent sheaves.

We say that $(\nu, \mu) \in 
P^{\vee} \times P$ is a dominant pair  
if $\nu \in P^{\vee}$ is a 
dominant coweight
and $\mu \in P$ is dominant 
with respect to 
the Levi quotient of
the stabilizer subgroup
$\Stab_{GL_{n}(\Oo)}[t^{\nu}]$. 
More explicitly, the set $\Dom_{n}$ of dominant pairs is given by
$$
\Dom_{n} = \left \{
(\nu, \mu) \in P^{\vee}_{+} \times P \; \middle|
\begin{array}{l}
\nu = (\nu_1, \ldots, \nu_n), \mu = (\mu_1, \ldots, \mu_{n}), \\
\text{$\mu_{k} \ge \mu_{k+1}$ whenever $\nu_{k} = \nu_{k+1}$}
\end{array}
\right\}.
$$
 
To each dominant pair $(\nu, \mu) \in \Dom_{n}$, 
we associate a simple perverse coherent sheaf $\Pp_{\nu, \mu}$
in the following way.
Note that the group
$$
\Stab_{GL_{n}(\Oo)}^{red}[t^{\nu}] := \{ g \in GL_{n}(\C) \mid g \cdot t^{\nu} = t^{\nu} \cdot g \} 
\cong \textstyle \prod_{k} GL_{m_k}(\C)
$$
is a Levi subgroup of $\Stab_{GL_{n}(\Oo)}[t^{\nu}]$,
where $m_{k}$ is the multiplicity of $k$ in the sequence $\nu \in \Z^{n}$.
Then the group
$$
\Stab_{GL_{n}(\Oo)}^{red}[t^{\nu}] \rtimes \C^{\times} =
\Stab_{GL_{n}(\Oo)}^{red}[t^{\nu}] \times \C^{\times}
$$
is a Levi subgroup of $\Stab_{GL_{n}(\Oo) \rtimes \C^{\times}} [t^{\nu}]$.
Thus we can identify
$$
\Irr \Stab_{GL_{n}(\Oo) \rtimes \C^{\times}} [t^{\nu}]
= 
\Irr (\Stab_{GL_{n}(\Oo)}^{red} [t^{\nu}] \times \C^{\times}) 
\supset 
\Irr \Stab_{GL_{n}(\Oo)}^{red} [t^{\nu}],
$$
where the set $\Irr  \Stab_{GL_{n}(\Oo)}^{red} [t^{\nu}]$ is regarded as a 
subset of $\Irr (\Stab_{GL_{n}(\Oo)}^{red} [t^{\nu}] \times \C^{\times})$
consisting of representations with the trivial $\C^{\times}$-actions.
Let $\mathcal{V}_{\mu}$ denote the simple 
$GL_{n}(\Oo) \rtimes \C^{\times}$-equivariant vector bundle on $\Gr_{GL_n}^{\nu}$
whose fiber at $[t^{\nu}]$ is isomorphic to $V_{\mu}$ 
as a representation of $\Stab_{GL_{n}(\Oo) \rtimes \C^{\times}}[t^{\nu}]$. 
We define the simple perverse coherent sheaf $\Pp_{\nu, \mu}$ as
the following (coherent) $\IC$-extension (cf.~\cite[Theorem 4.2]{AB10})
$$
\Pp_{\nu, \mu} := (i_{\nu})_{!*} \mathcal{V}_{\mu} \langle \dim \Gr_{GL_{n}}^{\nu} /2 \rangle \{ - \langle \nu, \mu \rangle \},   
$$  
where $i_{\nu}\colon \Gr_{GL_{n}}^{\nu} \hookrightarrow \overline{\Gr}{}_{GL_n}^{\nu}$ is the inclusion. 

Since each simple perverse coherent sheaf is isomorphic to an $\IC$-extension of a simple vector bundle 
on some $GL_{n}(\Oo) \rtimes \C^{\times}$-orbit (cf.~\cite[Proposition 4.11]{AB10}),
we have a bijection
$$
\Dom_{n} \times \Z 
\overset{1:1}{\longleftrightarrow}
\Irr \Perv_{coh}^{GL_{n}(\Oo) \rtimes \C^{\times}}(\Gr_{GL_n});
\quad
((\nu, \mu), m)
\longleftrightarrow
\Pp_{\nu, \mu}\{m/2\}.
$$ 
In particular, the set 
$$\mathscr{P}_{n} :=  \{ [\Pp_{\nu, \mu}] \mid (\nu, \mu) \in \Dom_{n}\}$$
forms a $\Z[q^{\pm 1/2}]$-basis of the convolution ring $K^{GL_{n}(\Oo)\rtimes \C^{\times}}(\Gr_{GL_{n}})$.

\subsection{Lattice description}

Recall that the affine Grassmannian $\Gr_{GL_n}$ can be interpreted as 
the moduli space of $\Oo$-lattices $L$ in $\Kk^{n}$.
Let $L_{0} := \Oo^{n} \subset \Kk^{n}$ be the standard $\Oo$-lattice.
A coset $[g(t)] \in \Gr_{GL_n} = GL_{n}(\Kk) / GL_{n}(\Oo)$ corresponds to a lattice $L = g(t)L_{0}$.
Then for each $\nu \in P^{\vee}_{+}$ with $\nu_{n} \ge 0$, we have 
$$
\Gr_{GL_n}^{\nu} = \{ L \subset L_{0} \mid \text{$t|_{L_{0}/L}$ is nilpotent of type $\nu$}\}.
$$
In particular, when $\nu = \omega_{k}$, we get
$$
\Gr_{GL_n}^{k} := \Gr_{GL_n}^{\omega_{k}} = \{ L \overset{k}{\subset} L_{0} \mid tL_{0} \subset L \},
$$ 
where $L \overset{k}{\subset} L_{0}$ indicates that $\dim (L_{0}/L) = k$.
In particular, $\Gr_{GL_n}^{k} = \overline{\Gr}{}_{GL_n}^{k} $ 
is isomorphic to the usual Grassmannian $\Gr(k,n)$ of $k$-dimensional subspaces in $\C^{n}$.
 
For each $1 \le k \le n$ and $\ell \in \Z$, we put
$\Pp_{k, \ell} := \Pp_{\omega_{k}, \ell \omega_{k}}$ for simplicity. 
Using the above description of $\Gr_{GL_{n}}^{k}$,
we see
$$
\Pp_{k, \ell} = i_{\omega_k *}\left( 
\Oo_{\Gr_{GL_n}^{k}} \otimes \mathrm{det} (L_{0}/L)^{\ell} 
\right)
 \langle k(n-k)/2 \rangle \{-k \ell \},
$$
where we denote by $L_{0}/L$ the vector bundle on $\Gr_{GL_{n}}^{k}$
whose fiber at $L$ is 
equal to $L_{0}/L$ by an abuse of notation.


\subsection{Cautis-Williams' monoidal categorification theorem}
\label{Ssec:CW}

In \cite{CW18}, Cautis and Williams
proved that, for a general complex reductive group $G$,
the category of 
$G(\Oo) \rtimes \C^{\times}$-equivariant perverse coherent sheaves 
is a {\em rigid} monoidal category, i.e.~every object $\mathcal{F}$ 
has its left and right duals.
Moreover, they also proved the existence of a 
{\em system of renormalized $r$-matrices} 
(originated in the settings of the quiver Hecke algebras and the quantum affine algebras,
see~\cite{KKKO18} for instance), 
which informally encodes some information about how the category fails to be 
a braided tensor category.
Using these facts, it was successfully proved in the case $G=GL_{n}$
that the monoidal category $\Perv_{coh}^{GL_{n}(\Oo) \rtimes \C^{\times}}(\Gr_{GL_{n}})$ 
categorifies the quantum unipotent cell $\mathscr{A}^{loc}_{n}$.
More precisely, we have:     
 
\begin{Thm}[Cautis-Williams \cite{CW18}]
\label{Thm:CW}
There exists an isomorphism
of $\Z[q^{\pm 1/2}]$-algebras
$$
\Phi \colon  \mathscr{A}^{loc}_{n} \xrightarrow{\sim} K^{GL_{n}(\Oo)\rtimes \C^{\times}}(\Gr_{GL_{n}}),
$$
which sends
each cluster monomial to the class of a simple perverse coherent sheaf. 
Moreover,
for each $1 \le b \le d \le 2n$ with $d-b \in 2\Z$, we have
$
\Phi(\widetilde{D}[b,d]) = [\Pp_{k, \ell}] 
$
with $$
k = 1+ \frac{1}{2}(d-b), \quad \ell = n+1 - \frac{1}{2}(b+d). 
$$
In particular, we have 
\begin{equation} \label{Eq:E=P}
\Phi(\widetilde{E}(\beta_{k})) = \Phi(\wt{D}[k,k]) = [\Pp_{1, n+1 - k}]
\end{equation}
for each $1 \le k \le 2n$.
\end{Thm}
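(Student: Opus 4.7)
The plan is to construct $\Phi$ first as a homomorphism of $\Z[q^{\pm 1/2}]$-algebras on the non-localized quantum cluster algebra $\mathscr{A}_n$ and then extend to $\mathscr{A}_n^{loc}$. By Theorem~\ref{Thm:GLS} the algebra $\mathscr{A}_n$ is generated by the initial cluster variables $X_1, \ldots, X_{2n}$, so it suffices to: (a)~specify $\Phi(X_k)$ as an IC-class, (b)~verify the quantum-torus relations $X_k X_\ell = q^{\Lambda_{k\ell}} X_\ell X_k$ in terms of these classes, and (c)~realize every cluster exchange relation as an identity in $K^{GL_n(\Oo) \rtimes \C^\times}(\Gr_{GL_n})$ coming from a short exact sequence of perverse coherent sheaves. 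The assertion (\ref{Eq:E=P}) forces $\Phi(X_k) = \Phi(\widetilde{D}[1,k]) = [\Pp_{(k+1)/2,\, n-(k-1)/2}]$ for odd $k$ and an analogous expression for even $k$, which pins down the definition on generators.

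For (b), the $q$-commutations follow from explicit computations of the convolutions $[\Pp_{1,m}] * [\Pp_{1,m'}]$ together with the two-step convolutions $[\Pp_{k, \ell}] * [\Pp_{k', \ell'}]$ for the initial sheaves; here the relevant support is contained in a proper convolution variety $\overline{\Gr}{}_{GL_n}^{\nu} \wt{\times} \overline{\Gr}{}_{GL_n}^{\nu'}$, and the powers of $q$ arising from the loop-rotation twist $\{-\langle \nu, \mu \rangle\}$ in the definition of $\Pp_{\nu, \mu}$ are engineered to match the exponents $\Lambda_{k\ell} = 2 \lceil k/2 \rceil (\lfloor k/2 \rfloor - \lfloor \ell/2 \rfloor)$. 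For (c), the essential input is the system of renormalized $r$-matrices in the rigid monoidal category $\Perv_{coh}^{GL_n(\Oo) \rtimes \C^\times}(\Gr_{GL_n})$ discussed in Section~\ref{Ssec:CW}: each $r$-matrix produces a short exact sequence $0 \to \mathcal{H}_1 \to \Pp * \Pp' \to \mathcal{H}_2 \to 0$ with $\mathcal{H}_i$ simple, which categorifies a cluster exchange relation at the level of $K$-theory.

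With (a)--(c) in hand, induction on $d - b$ identifies $\Phi(\widetilde{D}[b,d])$ with the predicted $[\Pp_{k, \ell}]$: the base $d = b$ is exactly (\ref{Eq:E=P}), and the inductive step matches a quantum $T$-system identity among the minors $\widetilde{D}[b,d]$ with the geometric exchange relation constructed in (c). Finally, localization by $D_0$ and $D_1$ on the algebraic side corresponds to inverting the classes of IC-sheaves supported on the zero-dimensional top stratum $\Gr_{GL_n}^{\omega_n} = \{[t^{\omega_n}]\}$, which are evidently invertible in the convolution ring, so $\Phi$ extends to the desired isomorphism $\mathscr{A}_n^{loc} \xrightarrow{\sim} K^{GL_n(\Oo) \rtimes \C^\times}(\Gr_{GL_n})$.

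The hardest step is (c). In the constructible geometric Satake setting, analogous $r$-matrices arise naturally from the commutativity constraint on the Beilinson-Drinfeld Grassmannian; in the coherent world no such fusion construction is available, and both the rigidity of $\Perv_{coh}^{GL_n(\Oo) \rtimes \C^\times}(\Gr_{GL_n})$ and the construction of renormalized $r$-matrices require substantial new input relying on explicit analysis of the singularities of the convolution morphism $\overline{m}\colon \overline{\Gr}{}_{GL_n}^{\nu} \wt{\times} \overline{\Gr}{}_{GL_n}^{\nu'} \to \overline{\Gr}{}_{GL_n}^{\nu + \nu'}$. Once these $r$-matrices and the associated short exact sequences are in place, the categorification of cluster monomials by simple perverse coherent sheaves is a formal consequence of general monoidal-categorification principles.
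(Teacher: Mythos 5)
This theorem is cited verbatim from Cautis--Williams~\cite{CW18}; the paper you are reading does not prove it, it invokes it as external input. So there is no ``paper's own proof'' to compare against, and your task is really to assess whether your sketch matches what Cautis and Williams actually do.

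At a high level your outline is consistent with~\cite{CW18}: they establish rigidity of $\Perv_{coh}^{GL_n(\Oo)\rtimes\C^\times}(\Gr_{GL_n})$, construct a system of renormalized $r$-matrices, verify the axioms of a \emph{monoidal seed} in the sense of~\cite{KKKO18}, and then invoke the KKKO monoidal categorification theorem to conclude that cluster monomials map to classes of simple objects. You correctly identify the hardest step as the construction of $r$-matrices and the rigidity of the category --- in the coherent setting there is no Beilinson--Drinfeld fusion, so these require genuinely new arguments involving the geometry of the convolution morphism. You also correctly observe that the frozen variables $D_0, D_1$ map to invertible objects supported on the point $\Gr_{GL_n}^{\omega_n}=\{[t^{\omega_n}]\}$, which handles the localization step.

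However, two points in your sketch are either backwards or elided. First, you say ``the assertion~(\ref{Eq:E=P}) forces $\Phi(X_k)=[\Pp_{(k+1)/2,\,n-(k-1)/2}]$'', but~(\ref{Eq:E=P}) is a \emph{consequence} of the theorem, not an input; only the base case $k=1$ ($X_1=\widetilde{D}[1,1]=\widetilde{E}(\beta_1)$) is pinned down by the dual PBW generator, and for $k>1$ the value $\Phi(X_k)$ has to be established independently (Cautis--Williams do this by a direct geometric computation of the convolution products of the minuscule sheaves and their intersections, not by appealing to the desired conclusion). Second, your steps (b) and (c) are listed as if one could simply ``verify the quantum-torus relations'' and ``realize each exchange relation'' on a generator-by-generator basis and be done; in the actual proof, what one verifies is the compatibility of a \emph{single} initial monoidal seed with the KKKO axioms (existence of $r$-matrices, real simple modules, $\Lambda$-matrix conditions, etc.), after which the propagation to all cluster monomials and all exchange relations is automatic from the general machinery --- one does not check each exchange relation by hand. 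Your sketch therefore has the right ingredients but scrambles the logical order and understates how much of the work is absorbed into verifying the KKKO hypotheses for the initial seed.

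In short: your reconstruction is a reasonable high-level account of~\cite{CW18}, with the key ideas (rigidity, renormalized $r$-matrices, KKKO monoidal categorification, invertibility of the frozen classes) correctly identified. But the statement in the paper is a citation, not a proved result, so no comparison with a ``paper proof'' is possible, and your argument as written has a circularity around~(\ref{Eq:E=P}) and conflates checking a single monoidal-seed axiom set with checking all exchange relations.
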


The bar involution $\iota$ of $\mathscr{A}^{loc}_{n}$ was also categorified in \cite[Section 6.2]{CW18}. 
Let $\sigma_{1}$ be the group involution of $GL_{n}(\Oo) \rtimes \C^{\times}$ given by 
$(g(t), a) \mapsto (\T g(t)^{-1}, a)$, where $\T (-)$ denotes the transpose of matrices. 
Then the morphism 
$$\eta : \Gr_{GL_n} \to GL_{n}(\Kk) \times^{GL_{n}(\Oo)} (\Gr_{GL_n})^{\sigma_{1}} = 
(GL_{n}(\Kk) \rtimes \C^{\times}) \times^{(GL_{n}(\Oo) \rtimes \C^{\times})} (\Gr_{GL_n})^{\sigma_{1}}$$
defined by $\eta([g(t)]) := [g(t), [\T g(t)]]$ becomes $GL_{n}(\Oo) \rtimes \C^{\times}$-equivariant. 
We define an involutive auto-equivalence $\iota$ on $D^{GL_{n}(\Oo) \rtimes \C^{\times}}_{coh}(\Gr_{GL_n})$ by 
$$
\iota(\mathcal{F}) := \mathbb{D} \circ \mathbb{L} \circ \eta^{*} (\Oo_{\Gr_{GL_n}} \, \wt{\boxtimes} \, \mathcal{F}^{\sigma_{1}}), 
$$  
where $\mathbb{D}$ is the Grothendieck-Serre duality functor and $\mathbb{L}$ is an auto-equivalence 
on $D^{GL_{n}(\Oo) \rtimes \C^{\times}}_{coh}(\Gr_{GL_n})$ which, on the $d$-th component 
$\Gr_{GL_{n}}^{(d)}$, acts by tensoring with 
$$
\Oo_{\Gr_{GL_n}}(-n) \otimes \mathrm{det}(L_0 / t L_0)^{d}\{d(n-d)\}.
$$   

\begin{Thm}[{\cite[Corollary 6.24 \& 6.25]{CW18}}]
The involution $\iota$ is contravariant
with respect to both convolution product $*$ and $\mathrm{Hom}$, 
preserves the category of perverse coherent sheaves and satisfies
$\iota(\Pp_{\nu, \mu} \{m/2\}) \cong \Pp_{\nu, \mu} \{ -m/2 \}$
for any $(\nu, \mu) \in \Dom_{n}$ and $m/2 \in \frac12\Z$.
Therefore, for any $\xi \in \mathscr{A}^{loc}_{n}$, we have
$
\Phi(\iota \xi) = \iota \Phi(\xi).
$
\end{Thm}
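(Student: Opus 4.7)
The statement packages three structural assertions about the auto-equivalence $\iota$ defined by $\iota(\mathcal{F})=\mathbb{D}\circ\mathbb{L}\circ\eta^{*}(\Oo_{\Gr_{GL_n}}\,\wt{\boxtimes}\,\mathcal{F}^{\sigma_{1}})$---contravariance for $*$ and $\Hom$, preservation of $\Perv_{coh}^{GL_{n}(\Oo)\rtimes\C^{\times}}(\Gr_{GL_n})$, and the identity $\iota(\Pp_{\nu,\mu}\{m/2\})\cong\Pp_{\nu,\mu}\{-m/2\}$---together with the corollary $\Phi\circ\iota=\iota\circ\Phi$. My plan is to verify the three structural assertions by unpacking the composite one functor at a time, and then obtain the corollary by a characterization argument on the basis $\mathscr{P}_{n}$.

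For contravariance three inputs suffice. First, $\sigma_{1}$ is a group \emph{anti}-automorphism of $GL_{n}(\Oo)\rtimes\C^{\times}$ (since $\T(ab)^{-1}=\T b^{-1}\,\T a^{-1}$), so $\mathcal{F}\mapsto\mathcal{F}^{\sigma_{1}}$ intertwines $*$ with the opposite convolution. Second, Grothendieck-Serre duality $\mathbb{D}$ is itself contravariant and commutes with the proper pushforward $\overline{m}_{*}$ defining $*$, via the standard duality identity. Third, the line-bundle twist $\mathbb{L}$ has a controlled interaction with convolution, verified by a determinantal computation on the generating classes $\Oo(-n)$ and $\det(L_{0}/tL_{0})^{d}$ that track the connected component. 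Composing these three contributions yields $\iota(\mathcal{F}*\mathcal{G})\cong\iota(\mathcal{G})*\iota(\mathcal{F})$. Preservation of the perverse coherent heart is then checked by noting that $\mathbb{D}$ is $t$-exact for the middle perverse coherent $t$-structure (a defining feature in \cite{AB10}), and that the remaining ingredients $(-)^{\sigma_{1}}$, $-\otimes\mathbb{L}$, and $\eta^{*}$ are $t$-exact after absorbing the cohomological shifts built into $\mathbb{L}$.

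The principal obstacle is verifying $\iota(\Pp_{\nu,\mu})\cong\Pp_{\nu,\mu}$; the replacement $\{m/2\}\mapsto\{-m/2\}$ is automatic from $\mathbb{D}(\C_{m})=\C_{-m}$. Since $\iota$ is involutive and preserves the abelian category, it permutes the isomorphism classes of simples. The transpose-inverse fixes every $t^{\nu}$ (these are diagonal), so $\iota$ preserves the orbit stratification, and it must send $\Pp_{\nu,\mu}$ to $\Pp_{\nu,\mu'}$ for some $\mu'$ depending on $\mu$. The identification $\mu'=\mu$ would be pinned down by restricting to the open orbit $\Gr_{GL_n}^{\nu}$ and tracking the induced action on the irreducible representation $V_{\mu}$ of $\Stab_{GL_{n}(\Oo)\rtimes\C^{\times}}[t^{\nu}]\cong\prod_{k}GL_{m_{k}}\times\C^{\times}$: Grothendieck-Serre duality dualizes $V_{\mu}$ and twists by the canonical bundle of the stratum; the $\sigma_{1}$-twist reinterprets this dualization as transpose-inverse, contributing another $-w_{0}$ on each $GL_{m_{k}}$-factor; and the calibrated twist $\mathbb{L}$ (whose $\det(L_{0}/tL_{0})^{d}$-factor is tuned to the connected component) supplies the remaining character shift, producing $V_{\mu}$ back on the nose.

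The corollary $\Phi(\iota\xi)=\iota\Phi(\xi)$ now follows by characterization: on the geometric side $\iota$ induces a $\Z$-linear anti-involution on $K^{GL_{n}(\Oo)\rtimes\C^{\times}}(\Gr_{GL_n})$ sending $q^{1/2}\mapsto q^{-1/2}$ and fixing each element of $\mathscr{P}_{n}$, while on the algebraic side the Berenstein-Zelevinsky involution on $\mathscr{A}_{n}^{loc}$ is the unique $\Z$-linear anti-involution sending $q^{1/2}\mapsto q^{-1/2}$ and fixing every cluster variable. Since by Theorem~\ref{Thm:CW} every cluster variable lies in $\Phi^{-1}(\mathscr{P}_{n})$, the two anti-involutions agree on a generating set of $\mathscr{A}_{n}^{loc}$, hence everywhere.
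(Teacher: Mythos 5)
This theorem is quoted in the paper with the attribution ``[CW18] Corollary 6.24 + 6.25'' and is not re-proved there; the only part that is original to the present paper is the final ``Therefore'' clause, which is left implicit as an immediate consequence. So there is no proof in the paper to match your argument against for the main structural assertions; you are effectively re-deriving Cautis--Williams' results, which is a reasonable thing to do but goes beyond the text.

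On the substance of your sketch: your argument for the ``Therefore'' clause is essentially the intended one --- the induced map on $K$-groups is an anti-involution sending $q^{1/2}\mapsto q^{-1/2}$ and fixing the classes $[\Pp_{\nu,\mu}]$, and it agrees with $\Phi\circ\iota\circ\Phi^{-1}$ on cluster variables (which generate $\mathscr{A}_n^{loc}$ by definition of a quantum cluster algebra together with the frozen localization), hence everywhere. Two corrections are in order, though. First, $\sigma_1\colon(g(t),a)\mapsto(\T g(t)^{-1},a)$ is a group \emph{automorphism}, not an anti-automorphism: $\T(ab)^{-1}=\T(b^{-1}a^{-1})=\T a^{-1}\T b^{-1}$, so $\sigma_1(ab)=\sigma_1(a)\sigma_1(b)$. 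Consequently $\mathcal{F}\mapsto\mathcal{F}^{\sigma_1}$ is a covariant monoidal functor, and the contravariance of $\iota$ with respect to $*$ cannot be blamed on $\sigma_1$; it comes instead from the morphism $\eta$ (which builds in the ``inverse'' of the group element, as in Cautis--Williams' duality construction) together with Grothendieck--Serre duality $\mathbb{D}$. Second, your verification that $\iota(\Pp_{\nu,\mu})\cong\Pp_{\nu,\mu}$ is only a program: the claim that the three twists ($\mathbb{D}$, $\sigma_1$, $\mathbb{L}$) exactly cancel on the fiber representation $V_\mu$ is the content of \cite[Cor.\ 6.25]{CW18} and requires an actual calculation with the canonical bundle of the stratum and the normalization of $\mathbb{L}$; as written it is an assertion, not an argument.
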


Note that the basis
$\mathscr{P}_{n}$ of $K^{GL_{n}(\Oo) \rtimes \C^{\times}}(\Gr_{GL_n})$ is nothing but the subset 
formed by the classes of $\iota$-selfdual simple perverse coherent sheaves.

\section{Comparison with nilpotent cones of type $A$}

\subsection{Main result}

The main theorem of this paper is the following.
\begin{Thm}
\label{Thm:0}
Under Cautis-Williams' isomorphism $\Phi \colon  \mathscr{A}^{loc}_{n} \cong K^{GL_{n}(\Oo)\rtimes \C^{\times}}(\Gr_{GL_{n}})$ in
Theorem~\ref{Thm:CW},
the dual canonical basis $\widetilde{\mathscr{B}}^{loc}_{n}$ of $\mathscr{A}^{loc}_{n}$
bijectively corresponds to the basis $\mathscr{P}_{n}$ of $K^{GL_{n}(\Oo)\rtimes \C^{\times}}(\Gr_{GL_{n}})$ 
formed by the classes of $\iota$-selfdual simple perverse coherent sheaves.
\end{Thm}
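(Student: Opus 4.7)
The plan is to verify the two axiomatic properties listed in Corollary~\ref{Cor:characterization}. For each weight $\beta\in\mathsf{Q}^+$ and each $\bfa\in\KP_n(\beta)$, the support and equivariant structure of a candidate simple perverse coherent sheaf $\Pp(\bfa)=\Pp_{\nu(\bfa),\mu(\bfa)}$ are dictated by $\bfa$ itself; it then suffices to show that $[\Pp(\bfa)]$ is $\iota$-selfdual and that the classes $\{\Phi^{-1}[\Pp(\bfa)]\}_{\bfa}$ form a unitriangular change of basis from $\{\widetilde{E}(\bfa)\}_{\bfa}$ modulo $q^{-1}$. The passage from $\mathscr{A}_n$ to the quantum unipotent cell $\mathscr{A}^{loc}_n$ is handled by Proposition~\ref{Prop:frozen} together with the $*$-invertibility of the classes $[\Pp_{n,1}]$ and $[\Pp_{n,0}]$ realizing the frozen variables $D_0, D_1$ (these are equivariant line bundles on the single point $\Gr_{GL_n}^{\omega_n}=\{tL_0\}$, hence invertible in the convolution ring).

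Property~(1) is essentially free: by the Cautis-Williams categorification of the bar involution (Corollary~6.24 and~6.25 of~\cite{CW18}), the categorified $\iota$ fixes each $\Pp_{\nu,\mu}$ up to $\{m/2\}$-twists and is intertwined with the algebra involution $\iota$ by $\Phi$. The heart of the argument is property~(2). By equation~\eqref{Eq:rescaled dual PBW} and Theorem~\ref{Thm:CW}, $\Phi(\widetilde{E}(\bfa))$ is a $q$-normalized convolution of the minuscule line bundle classes $[\Pp_{1,n+1-k}]$ supported on $\Gr_n^1$. What must be proved is the geometric statement that the IC-expansion of this convolution has leading term $[\Pp(\bfa)]$ with coefficient~$1$ and every other coefficient in $q^{-1}\Z[q^{-1}]$.

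For this I follow Bezrukavnikov's suggestion and reduce to perverse coherent sheaves on the nilpotent cone. For each $d$, introduce the closed subvariety $\Gr_n^d\subset \Gr_{GL_n}$ of sublattices $L\subset L_0$ of codimension $d$ with $tL_0\subset L$, and the natural smooth morphism of quotient stacks
$$
\psi\colon [(GL_n(\Oo)\rtimes\C^\times)\backslash \Gr_n^d] \longrightarrow [(GL_d\times\C^\times)\backslash \Nn_{\mathfrak{gl}_d}],
$$
sending $L$ to the nilpotent endomorphism of $L_0/L$ induced by multiplication by $t$. Because $\psi$ is smooth, a suitable shift of $\psi^*$ is $t$-exact for the perverse coherent $t$-structure and carries the coherent IC-sheaf on the $GL_d\times\C^\times$-orbit of Jordan type $\nu^{\mathsf{t}}$ to $\Pp_{\nu,\mu}$ (for the matching character $\mu$). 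A parallel analysis, iterating the natural projections $\Gr_n^d\to\Gr_n^{d-1}\to\cdots\to\Gr_n^1$ that track filtrations $L=L_d\subset L_{d-1}\subset\cdots\subset L_0$ of sublattices, identifies the $\psi^*$-pullbacks of the Andersen-Jantzen sheaves on $\Nn_{\mathfrak{gl}_d}$ (pushforwards of dominant line bundles from $T^*\mathcal{B}_d$) with the desired convolutions of line bundles on $\Gr_n^1$. Granting these two identifications, Bezrukavnikov's theorem~\cite{Bez06}---which gives precisely the required unitriangularity between Andersen-Jantzen and IC-classes on $\Nn_{\mathfrak{gl}_d}$---pulls back through $\psi^*$ to yield property~(2).

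The main obstacle, I expect, is the careful bookkeeping needed to make the two identifications precise: matching the cohomological shifts $\langle\cdot\rangle$ and the $\C^\times$-twists $\{\cdot\}$ used in defining $\Pp_{\nu,\mu}$ with their counterparts on the nilpotent side, reconciling the $4$-fold loop-rotation convention with the scaling action of $\C^\times$ on $\Nn_{\mathfrak{gl}_d}$ (which is responsible for the appearance of half-integer powers of $q$), and identifying the dominant $GL_d$-weight that assembles from each iterated minuscule convolution. Once these normalizations are pinned down, Bezrukavnikov's theorem combined with Theorem~\ref{Thm:CW} and Corollary~\ref{Cor:characterization} yields the desired bijection between $\widetilde{\mathscr{B}}^{loc}_n$ and $\mathscr{P}_n$.
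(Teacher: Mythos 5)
Your proposal follows essentially the same route as the paper's: reduce to the un-localized statement (Theorem~\ref{Thm:1}) via the invertibility of the frozen-variable classes $[\Pp_{n,0}],[\Pp_{n,1}]$, then verify the two characterizing properties of Corollary~\ref{Cor:characterization} by pulling back Bezrukavnikov's characterization on $\Nn^{d}$ through a shift of $\psi^{*}$, using that this functor carries coherent IC-sheaves to coherent IC-sheaves and Andersen--Jantzen sheaves to $d$-fold minuscule convolutions. Two slips in your description should be corrected: the closed subvariety in question is $\Gr_n^d = \overline{\Gr}{}_{GL_n}^{d\omega_1} = \{L\subset L_0 \mid \dim(L_0/L)=d\}$ \emph{without} the extra condition $tL_0\subset L$ (that condition cuts down to the minuscule orbit $\Gr(d,n)$, on which $\psi$ would collapse everything to $0\in\Nn^d$); and with the paper's conventions $\psi$ sends $\Gr_{GL_n}^{\nu}$ to the nilpotent orbit $\OO_{\nu}$ of Jordan type $\nu$, not $\nu^{\mathsf{t}}$.
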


By Theorem~\ref{Thm:CW}, we have $\Phi(D_{0}) = [\Pp_{n, 1}]$ and 
$\Phi(D_{1}) = [\Pp_{n, 0}]$. 
Since both $\Pp_{n, 0}$ and $\Pp_{n, 1}$ are invertible objects 
of the monoidal category $\Perv_{coh}^{GL_{n}(\Oo) \rtimes \C^{\times}}(\Gr_{GL_{n}})$,
the operations $- \odot [\Pp_{n, 0}]^{\pm 1}$ and $- \odot [\Pp_{n, 1}]^{\pm 1}$
induce the self-bijections of the set $\mathscr{P}_{n}$.
Therefore, to verify Theorem~\ref{Thm:0}, it suffices to 
prove the following simpler assertion:

\begin{Thm}
\label{Thm:1}
We have
$
\Phi( \widetilde{B}(\bfa)) \in \mathscr{P}_{n}
$
for any $\bfa \in \N^{2n}$.
\end{Thm}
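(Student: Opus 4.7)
My plan is to establish Theorem~\ref{Thm:1} by verifying the two characterizing properties of Corollary~\ref{Cor:characterization} for the images $\Phi^{-1}[\Pp_{\nu,\mu}]$, and then concluding by the uniqueness statement there. Property~(1), bar-invariance, comes essentially for free: the last theorem of~\cite{CW18} quoted above says that every class in $\mathscr{P}_{n}$ is $\iota$-selfdual, and $\Phi$ intertwines the two bar involutions $\iota$. So the real content is to exhibit, for each $\bfa \in \KP_n(\beta)$, a dominant pair $(\nu(\bfa),\mu(\bfa))\in\Dom_n$ with a unitriangular expansion
\[
\Phi^{-1}\bigl([\Pp_{\nu(\bfa),\mu(\bfa)}]\bigr)\ \in\ \widetilde{E}(\bfa)+\sum_{\bfa'}q^{-1}\Z[q^{-1}]\,\widetilde{E}(\bfa').
\]

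First I would render the dual PBW element geometric. Combining the product formula~\eqref{Eq:rescaled dual PBW} with the identification~\eqref{Eq:E=P}, the class $\Phi(\widetilde{E}(\bfa))$ is, up to an explicit half-integer $q$-shift, the convolution product of $a_k$ copies of the line-bundle IC-sheaf $\Pp_{1,n+1-k}$ on the minuscule orbit $\Gr_n^1$, taken in the order $k=1,\dots,2n$. Because $*$ is $t$-exact on perverse coherent sheaves, this convolution is itself perverse coherent, and an inspection of its support shows that it lives on the closed subvariety $\Gr_n^d\subset\Gr_{GL_n}$ of codimension-$d$ sublattices of $L_0$, where $d=\sum_k a_k$. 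Thus the required expansion is an $\IC$-decomposition of a specific iterated convolution of minuscule line bundles on $\Gr_n^d$.

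The main step is to transport this decomposition to the nilpotent cone. I would use the smooth morphism of quotient stacks
\[
\psi\colon\bigl[(GL_n(\Oo)\rtimes\C^\times)\backslash\Gr_n^d\bigr]\longrightarrow\bigl[(GL_d\times\C^\times)\backslash\Nn_{\mathfrak{gl}_d}\bigr]
\]
from the introduction, sending a sublattice $L\subset L_0$ of codimension $d$ to the nilpotent endomorphism $t$ of $L_0/L$. Two compatibility statements need checking: (i) up to the appropriate cohomological and equivariant shift, $\psi^*$ sends the IC-sheaves on $\Nn_{\mathfrak{gl}_d}$ (indexed by a Jordan type and an irreducible character of its centralizer) to the IC-sheaves $\Pp_{\nu,\mu}$ with $\sum\nu_i=d$; (ii) $\psi^*$ sends the Andersen-Jantzen sheaves on $\Nn_{\mathfrak{gl}_d}$ to the iterated convolutions of the $\Pp_{1,\ell_k}$ isolated above, under a bijection $\bfa\leftrightarrow\lambda(\bfa)$ between $\bigsqcup_\beta\KP_n(\beta)$ and the dominant $GL_d$-weights labelling Andersen-Jantzen sheaves. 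Granting (i) and (ii), Bezrukavnikov's theorem in~\cite{Bez06} — that the transition matrix from Andersen-Jantzen classes to IC-classes in $\Perv_{coh}^{GL_d\times\C^\times}(\Nn_{\mathfrak{gl}_d})$ is upper unitriangular with off-diagonal entries in $q^{-1}\Z[q^{-1}]$ — descends through the faithfully flat pullback $\psi^*$ to the desired expansion on $\Gr_n^d$, and Corollary~\ref{Cor:characterization} forces $\Phi^{-1}([\Pp_{\nu(\bfa),\mu(\bfa)}])=\widetilde{B}(\bfa)$.

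The most substantial obstacle will be point (ii): matching an iterated convolution of minuscule line bundles $\Pp_{1,\ell_k}$ with the correct Andersen-Jantzen sheaf in the normalization that has $q$-diagonal coefficient exactly $1$. This demands a Bott-type analysis of global sections along the Springer resolution of $\Nn_{\mathfrak{gl}_d}$, together with a careful bookkeeping of the half-integer twists $\{m/2\}$ and shifts $[m/2]$ that enter the perverse coherent conventions of~\cite{AB10,CW18}. The combinatorial recipe $\bfa\mapsto(\nu(\bfa),\mu(\bfa))$ should then fall out of the classical dictionary between Kostant partitions for $U_q^+(\widehat{\mathfrak{sl}}_2)$ and dominant pairs on the $GL_d$-side, and compatibility with Bezrukavnikov's ordering will follow once the two parametrizations are aligned.
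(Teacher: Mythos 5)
Your plan coincides with the paper's strategy in all essentials: geometrize the dual PBW element as an iterated convolution of the $\Pp_{1,n+1-k}$, transport along the smooth morphism $\psi$ to the nilpotent cone, match coherent IC-sheaves with coherent IC-sheaves and Andersen--Jantzen sheaves with the convolutions, and invoke Bezrukavnikov's unitriangularity together with the characterization in Corollary~\ref{Cor:characterization}. That is exactly what the paper does (via the shifted functor $\Psi = \psi^*(-)\langle d(n-1)/2\rangle$ and the Achar--Hardesty graded Lusztig--Vogan bijection). The one place where your sketch diverges from the paper's execution is point (ii). You propose a ``Bott-type analysis of global sections along the Springer resolution''; the paper instead produces the identification $\Psi(AJ(\lambda)\{\cdot\}) \cong \Pp_{1,\ell_1}*\cdots*\Pp_{1,\ell_d}$ purely geometrically, by fitting the Springer resolution and the $d$-fold convolution morphism $\wt{\Gr}{}_n^d \to \Gr_n^d$ into a single Cartesian correspondence through an auxiliary space $\Mm_n^d$ (lattices equipped with a trivialization of $L_0/L$), and applying flat base change. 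No cohomology computation is required; the line bundles simply pull back to one another across the correspondence. Likewise, the ``careful bookkeeping of half-integer twists'' that you correctly flag as the main obstacle is resolved in the paper by a single device you did not name: a group automorphism $\omega\colon (h,a)\mapsto(ha^2,a)$ of $GL_d\times\C^\times$, twisting by which absorbs all the $\C^\times$-equivariant shifts at once ($\Psi(\Cc_{\nu,\mu}^\omega)\cong\Pp_{\nu,\mu}$, $\Psi(\nabla_{\lambda_\bfa}^\omega)\cong\Ee(\bfa)$, and $\iota\circ[\Psi]=[\Psi]\circ\iota\circ(-)^{\omega^{-2}}$), so that Bezrukavnikov's characterization transports cleanly. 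So: same approach, with the two places you flagged as needing work filled in by a Cartesian base-change diagram and the $\omega$-twist, rather than by a sections computation.
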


A proof will be given in the end of Section~\ref{Ssec:comparison}. 

\begin{Rem}
When $n=2$, Theorem~\ref{Thm:0} can be verified directly
by using an explicit computation of the dual canonical basis of the quantum unipotent group $A_{2}$
due to Lampe~\cite{Lam14}.
\end{Rem}


\subsection{Perverse coherent sheaves on the nilpotent cone}

Fix $d \in \N$. 
Let $$\mathcal{N}^{d} :=\{ x \in \End(\C^{d}) \mid x^{d} = 0 \}$$
be the nilpotent cone of $\mathfrak{gl}_{d}(\C)$. 
A left action of the group $GL_{d}(\C) \times \C^{\times}$ on $\mathcal{N}^{d}$
is given by $(g, a) \cdot x = a^{-4} \mathrm{Ad}(g) x$.
The equivariant $K$-group $K^{GL_{d}(\C) \times \C^{\times}}(\mathcal{N}^{d})$ 
is a module over $\Z[q^{\pm 1/2}]$, 
where $q^{m/2} \in K^{GL_{d}(\C) \times \C^{\times}}(\mathrm{pt})$
denotes the class of the pull-back of the $1$-dimensional $\C^{\times}$-module $\C_m$
of weight $m$ along the natural projection $GL_{d}(\C) \times \C^{\times} \to \C^{\times}$.   

Recall that the nilpotent cone $\mathcal{N}^{d}$ has a finite number of $GL_{d}(\C)$-orbits
($= GL_{d}(\C) \times \C^{\times}$-orbits)
which are parametrized by the set $P(d)$
of partitions of $d$. 
The orbit $\OO_{\nu}$ labelled by a partition
$\nu =(\nu_{1} \ge \nu_{2} \ge \cdots ) \in P(d)$
consists of nilpotent matrices of Jordan type $\nu$,
i.e.~whose Jordan normal form is 
$$J_{\nu} := J_{\nu_{1}} \oplus J_{\nu_{2}} \oplus \cdots, \quad
\text{where $
J_{m} := \left(
\begin{array}{ccccc}
0 & & & &  \\
1 & 0 &  & & \\
& 1 & 0 & & \\
&  & \ddots & \ddots & \\
&  & & 1 & 0 
\end{array}
\right) \in \mathfrak{gl}_{m}(\C).
$}
$$
We can easily compute 
$\dim \OO_{\nu} = d^{2} - \sum_{i \ge 1} (2i-1)\nu_{i}$
and 
$\codim \OO_{\nu} = 
\dim \mathcal{N}^{d} - \dim \OO_{\nu} 
= \sum_{i \ge 1} (2i-1)\nu_{i} -d$,
both of which are even numbers.

We can consider the $GL_{d}(\C) \times \C^{\times}$-equivariant 
perverse coherent sheaves on the nilpotent cone $\Nn^{d}$. 

\begin{Def}
A $GL_{d}(\C) \times \C^{\times}$-equivariant perverse coherent sheaf on $\Nn^{d}$ is 
an object $\mathcal{F} \in D_{coh}^{GL_{d}(\C) \times \C^{\times}}(\Nn^{d})$ such that
for every orbit $j_{\nu} \colon  \OO_{\nu} \hookrightarrow \Nn^{d}$
\begin{enumerate}
\item $j_{\nu}^{*}\mathcal{F} \in D_{qcoh}^{GL_{d}(\C) \times \C^{\times}}(\Nn^{d})$
is supported in degrees $\le \frac{1}{2} \codim \OO_{\nu}$;
\item $j_{\nu}^{!}\mathcal{F} \in D_{qcoh}^{GL_{d}(\C) \times \C^{\times}}(\Nn^{d})$
is supported in degrees $\ge \frac{1}{2} \codim \OO_{\nu}$.
\end{enumerate}
We denote by $\Perv_{coh}^{GL_{d}(\C) \times \C^{\times}}(\Nn^{d}) \subset D_{coh}^{GL_{d}(\C)\times \C^{\times}}(\Nn^{d})$ 
the full subcategory of perverse coherent sheaves.
\end{Def} 

The simple perverse coherent sheaves
are parametrized by the set 
$$
\mathbf{O}_{d} := \{ (\nu, V)
\mid
\nu \in P(d), \,V
\in \Irr \Stab_{GL_{d}(\C)}(J_{\nu})
\}
$$
up to isomorphism and $\C^{\times}$-equivariant twist in the following way
(cf.~\cite[Section 3]{AH18}).
For each partition $\nu = (\nu_{1}, \nu_{2}, \ldots) \in P(d)$, we define
a homomorphism $\phi_{\nu}\colon \C^{\times} \to GL_{d}(\C)$ by
$$
\phi_{\nu}(a) := \phi_{\nu_{1}}(a) \oplus \phi_{\nu_{2}}(a) \oplus \cdots, 
$$
where 
$ \phi_{m}(a) := \mathrm{diag} (a^{2(-m+1)}, a^{2(-m+3)}, \ldots, a^{2(m-1)}) \in GL_{m}(\C)$.
The homomorphism $\phi_{\nu}$ is a cocharacter associated to the nilpotent element $J_{\nu}$
in the sense of \cite[Section 5.3]{Jan04}.
In particular, we have $\mathrm{Ad}(\phi_{\nu}(a)) J_{\nu} = a^{4} J_{\nu}$,
and the group 
$$
\Stab_{GL_{d}(\C)}^{red}(J_{\nu}) := 
\{
g \in \Stab_{GL_{d}(\C)}(J_{\nu}) \mid
g \phi_{\nu}(a) = \phi_{\nu}(a) g, \, \forall a \in \C^{\times}
\}
$$
is a Levi subgroup of $\Stab_{GL_{d}(\C)}(J_{\nu})$. 
Then the image of the group embedding
$$
\Stab_{GL_{d}(\C)}^{red}(J_{\nu}) \times \C^{\times} 
\hookrightarrow
\Stab_{GL_{d}(\C) \times \C^{\times}}(J_{\nu}) ;
\quad
(g, a) \mapsto (g \phi_{\nu}(a), a)
$$
is a Levi subgroup.
Via this embedding, we make an identification
$$
\Irr (\Stab_{GL_{d}(\C) \times \C^{\times}}(J_{\nu}))
=
\Irr (\Stab_{GL_{d}(\C)}^{red}(J_{\nu}) \times \C^{\times} )
\supset 
\Irr \Stab_{GL_{d}(\C)}^{red}(J_\nu),
$$
where the set $\Irr (\Stab_{GL_{d}(\C)}^{red}(J_{\nu}))$ is regarded 
as a subset of $\Irr (\Stab_{GL_{d}(\C)}^{red}(J_{\nu}) \times \C^{\times})$
consisting of representations with the trivial $\C^{\times}$-actions.
For a pair $(\nu, V) \in \mathbf{O}_{d}$,  
let $\mathcal{V}$ be the simple 
$GL_{d}(\C) \times \C^{\times}$-equivariant vector bundle on $\OO_{\nu}$
whose fiber at $J_\nu$ is isomorphic to $V$ 
as a representation of $\Stab_{GL_{d}(\C) \rtimes \C^{\times}}(J_\nu)$. 
We define the simple perverse coherent sheaf $\Cc_{\nu,V}$ as
the following (coherent) $\IC$-extension 
$$
\Cc_{\nu, V} := (j_{\nu})_{!*} \mathcal{V} \langle \codim \OO_\nu /2 \rangle,   
$$  
where $j_{\nu} : \OO_{\nu} \hookrightarrow \overline{\OO}_{\nu}$ is the inclusion. 

Under the above notation, we have a bijection
$$
\mathbf{O}_{d} \times \Z 
\overset{1:1}{\longleftrightarrow}
\Irr \Perv_{coh}^{GL_{d}(\C) \times \C^{\times}}(\mathcal{N}^{d});
\quad
((\nu, V), m)
\longleftrightarrow
\Cc_{\nu, V}\{m/2\}.
$$ 
Thus the set $\{ [\Cc_{\nu, V}] \mid (\nu, V) \in \mathbf{O}_{d}\}$
forms a $\Z[q^{\pm 1/2}]$-basis of $K^{GL_{d}(\C) \times \C^{\times}}(\Nn^{d})$. 

Next we introduce another basis of $K^{GL_{d}(\C) \times \C^{\times}}(\Nn^{d})$. 
Let $B_{d} \subset GL_{d}(\C)$ be the Borel subgroup consisting of 
invertible {\em lower} triangular matrices and 
$\mathcal{B}_{d} := GL_{d}(\C)/B_{d}$ be the flag variety.
The cotangent bundle $T^{*} \mathcal{B}_{d}$ is naturally identified with
the space $GL_{d}(\C) \times^{B_{d}} \mathfrak{n}_{d}$, where 
$\mathfrak{n}_{d} \subset \mathfrak{gl}_{d}(\C)$ is the Lie algebra of 
strictly lower triangular matrices (= the nilpotent radical of $\mathrm{Lie}( B_{d})$).
Let $\pi \colon  T^{*} \mathcal{B}_{d} \to \mathcal{B}_{d}$ denote the natural projection  $[g, x] \mapsto [g]$ 
and $\Sp \colon  T^{*} \mathcal{B}_{d} \to \mathcal{N}^{d}$ denote
the Springer resolution $[g, x] \mapsto \mathrm{Ad}(g)x$.
A natural left $GL_{d}(\C) \times \C^{\times}$-action on 
$T^{*} \mathcal{B}_{d}$ is given by
$(h, a) \cdot [g, x] := [hg, a^{-4}x]$.
Both morphisms $\pi$ and $\Sp$ are $GL_{d}(\C) \times \C^{\times}$-equivariant.    

Let $T_{d} \subset B_{d}$ be the maximal torus 
consisting of diagonal matrices.
As before, 
the weight lattice $X := \Hom(T_{d}, \C^{\times}) = \Hom(B_{d}, \C^{\times})$ is 
identified with $\Z^{d}$.
For any $\lambda \in X$, we denote by $\Oo_{\mathcal{B}_{d}}(\lambda)$
the corresponding line bundle $GL_{d}(\C) \times^{B_{d}} \lambda$ on $\mathcal{B}_{d}$, which is regarded 
as a $GL_{d}(\C) \times \C^{\times}$-equivariant bundle with the trivial $\C^{\times}$-action.
We define the corresponding {\em Andersen-Jantzen sheaf} $AJ(\lambda)$ by
$$
AJ(\lambda) := \Sp_{*} \pi^{*} \Oo_{\mathcal{B}_{d}}(\lambda).
$$
More precisely, $\Sp_{*}$ denotes the derived push forward and
the Andersen-Jantzen sheaf $AJ(\lambda)$ is an object of $D_{coh}^{GL_{d}(\C) \times \C^{\times}}(\Nn^{d})$
(which may or may not be a genuine sheaf).

As a convention, we regard the weights of $\mathfrak{n}_{d}$ as the negative roots.
Then the set of dominant weights is 
$X_{+} = \{ \lambda = (\ell_{1}, \ldots, \ell_{d}) \in X \mid \ell_{1} \ge \cdots \ge \ell_{d} \}$.
 
For each dominant weight $\lambda \in X_{+}$, we define
$$
\Delta_{\lambda} := AJ(w_{0}\lambda)\{\delta_{\lambda}\}, \qquad
\nabla_{\lambda} := AJ(\lambda) \{ -\delta_{\lambda}\}
$$
where 
$w_{0}$ is the longest element of the Weyl group $\SG_{d}$ of $GL_{d}(\C)$ and
$\delta_{\lambda} := \min\{ \ell(w) \mid w \in \SG_{d},\ w \lambda  \in  -X_{+} \}$. 
Explicitly we have 
\begin{equation} \label{Eq:delta}
\delta_{\lambda} = \frac{1}{2} ( d(d-1)  - \sum_{k} m_{k}(m_{k}-1)),
\end{equation}
where $m_{k}$ is the multiplicity of $k \in \Z$ in the sequence $\lambda \in \Z^{d}$. 

It is known that both objects $\Delta_{\lambda}$ and $\nabla_{\lambda}$ are perverse coherent sheaves.
Indeed the family $\{ \nabla_{\lambda} \mid \lambda \in X_{+}\}$ forms a quasi-exceptional set 
of the category $D^{GL_{d}(\C) \times \C^{\times}}_{coh}(\Nn^{d})$ 
with $\{ \Delta_{\lambda} \mid \lambda \in X_{+}\}$ being its dual, 
which yields the above perverse $t$-structure (cf.~\cite{Bez03}).
In particular, there is a canonical morphism $\Delta_{\lambda} \to \nabla_{\lambda}$ for each $\lambda \in X_{+}$. 
We denote the image of this canonical morphism by $\CC_\lambda$,
which is a simple perverse coherent sheaf. 
The following result due to Achar-Hardesty~\cite{AH18} is the graded (or $\C^{\times}$-equivariant) version of the {\em Lusztig-Vogan bijection}.
The non-graded version was originally established by \cite{Ach01} (for $GL_{d}$) and 
\cite{Bez03} (for a general reductive group instead of $GL_{d}$).

\begin{Thm}[{\cite[Theorem 4.5]{AH18}}]
 \label{Thm:LV bijection}
There is a bijection 
$$\LV\colon X_{+} \xrightarrow{\sim} \mathbf{O}_{d}$$ 
such that
we have 
$
\CC_\lambda \cong 
\Cc_{\LV(\lambda)}
$
for any $\lambda \in X_{+}$.
\end{Thm}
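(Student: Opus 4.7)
My plan is to establish Theorem~\ref{Thm:LV bijection} via Bezrukavnikov's theory of graded quasi-exceptional sets applied to $D^{GL_d(\C) \times \C^\times}_{coh}(\Nn^d)$. The key structural claim is that $\{\Delta_\lambda\}_{\lambda \in X_+}$ and $\{\nabla_\lambda\}_{\lambda \in X_+}$ form a dual pair of graded quasi-exceptional sets: for a suitable partial order $\preceq$ on $X_+$ refining the closure order on nilpotent orbits (via an auxiliary map $X_+ \to P(d)$ sending $\lambda$ to the open orbit in $\mathrm{supp}\,AJ(\lambda)$), one should have
\[
\Hom^\bullet(\Delta_\lambda, \nabla_\mu) = 0 \quad \text{unless } \lambda \preceq \mu,
\]
with $\Hom^\bullet(\Delta_\lambda, \nabla_\lambda)$ a free rank-one module over $K^{\C^\times}(\mathrm{pt}) = \Z[q^{\pm 1/2}]$ concentrated in cohomological degree zero and $\C^\times$-weight zero. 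By adjunction (using properness of $\Sp$) and the projection formula, these computations reduce to Borel-Weil-Bott on $\mathcal{B}_d$ together with the rational singularities property $R\Sp_*\Oo_{T^*\mathcal{B}_d} \cong \Oo_{\Nn^d}$, with the shifts $\delta_\lambda$ and the $\C^\times$-weights carefully tracked.

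Given this quasi-exceptional structure, the general formalism produces a bounded $t$-structure on $D^{GL_d(\C) \times \C^\times}_{coh}(\Nn^d)$ whose heart contains the image $\CC_\lambda$ of each canonical morphism $\Delta_\lambda \to \nabla_\lambda$ as a simple object, and these simples exhaust all isomorphism classes of simple objects in the heart as $\lambda$ varies. I would then identify this heart with $\Perv_{coh}^{GL_d(\C) \times \C^\times}(\Nn^d)$ by matching the perversity bounds on orbit stalks and costalks: base change of $\nabla_\lambda$ along the Springer fibration places $j_\nu^*\nabla_\lambda$ in cohomological degrees $\le \tfrac12\codim\OO_\nu$, and the formula~(\ref{Eq:delta}) for $\delta_\lambda$ is forced by this requirement. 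Consequently every simple perverse coherent sheaf is of the form $\CC_\lambda\{m/2\}$ for a unique $(\lambda, m) \in X_+ \times \Z$, and one defines $\LV(\lambda) = (\nu, V)$ with $\OO_\nu$ the open orbit in $\mathrm{supp}\,\CC_\lambda$ and $V$ the simple $\Stab_{GL_d(\C)\times \C^\times}(J_\nu)$-representation on the locally free restriction $\CC_\lambda|_{\OO_\nu}$ after removing the shift $\langle\codim\OO_\nu/2\rangle$. Bijectivity then follows from the classification of simples of $\Perv_{coh}$ by $\mathbf{O}_d$.

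The main obstacle is the graded refinement. The non-graded statement is Bezrukavnikov's original bijection, but locating the precise $\C^\times$-weight that makes $\CC_\lambda$ equal (rather than merely isomorphic up to $\{m/2\}$) to $\Cc_{\LV(\lambda)}$ requires controlling the Springer resolution as a $\C^\times$-equivariant morphism and analyzing how the cocharacter $\phi_\nu$ entering the Levi decomposition $\Stab^{red}_{GL_d(\C)}(J_\nu) \times \C^\times \hookrightarrow \Stab_{GL_d(\C)\times \C^\times}(J_\nu)$ interacts with the weights on fibers of Andersen-Jantzen sheaves. This weight-tracking is the technical heart of Achar-Hardesty's refinement and I would expect it to be the most delicate step of the argument.
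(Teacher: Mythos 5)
The paper does not prove this theorem; it is cited verbatim from Achar--Hardesty~\cite{AH18} (Theorem 4.5), which in turn is the $\C^\times$-graded refinement of Bezrukavnikov's Lusztig--Vogan bijection in~\cite{Bez03}. So there is no internal proof to compare against.

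That said, your sketch is a faithful outline of the strategy that~\cite{Bez03} and~\cite{AH18} actually follow: set up $\{\Delta_\lambda\}$, $\{\nabla_\lambda\}$ as a dual graded quasi-exceptional pair, verify orthogonality and normalization via cohomology of line bundles on $T^*\mathcal{B}_d$ (Broer/Panyushev-type vanishing plus rational singularities), show the induced $t$-structure coincides with the perverse coherent one by matching perversity bounds, and then read off the bijection from the classification of simples. Your identification of the graded refinement (tracking how $\phi_\nu$ and the cocharacter action on Springer fibers interact with the $\C^\times$-twists on fibers of $AJ(\lambda)$) as the delicate new content of~\cite{AH18} over~\cite{Bez03} is exactly right. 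Two small imprecisions worth flagging: (i) $\Hom$-spaces in $D^{GL_d(\C)\times\C^\times}_{coh}$ are $\C$-vector spaces, not $\Z[q^{\pm1/2}]$-modules; the ``rank-one over $K^{\C^\times}(\mathrm{pt})$'' normalization is really the statement that $\Hom^i(\Delta_\lambda,\nabla_\lambda\{m/2\})$ vanishes for $(i,m)\ne(0,0)$ and is one-dimensional at $(0,0)$. (ii) $\delta_\lambda$ is defined combinatorially (as $\min\{\ell(w):w\lambda\in -X_+\}$), and~(\ref{Eq:delta}) is a direct computation; the perversity constraints are consistent with this shift rather than ``forcing'' it without further argument. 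Also, the dominance-type partial order on $X_+$ used in the quasi-exceptional axioms is finer than the pullback of the orbit-closure order along $\lambda\mapsto\mathrm{supp}\,AJ(\lambda)$; proving it is a genuine partial order adapted to the support filtration is itself part of the content of~\cite{Bez03}. These are points where a complete writeup would have to do real work, but your plan correctly locates where that work lives.
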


We define the modified Grothendieck-Serre duality functor $\mathbb{D}_{\Nn^{d}}$ 
on $\Nn^{d}$ by
$$
\mathbb{D}_{\Nn^{d}} := \mathbb{R} \sHom(-, \mathcal{O}_{\Nn^d}).
$$ 

\begin{Rem}
 \label{Rem:canonical sheaf Nn}
The usual Grothendieck-Serre duality is defined by using
the dualizing complex. 
The dualizing complex $\omega_{\Nn^{d}}$ of the nilpotent cone $\Nn^{d}$ is
$\Oo_{\Nn^{d}} \langle d(d-1) \rangle$ (see \cite[Proposition 2.4]{AH18}). 
\end{Rem}

Let $\sigma$ be an involution of the group $GL_{d}(\C) \times \C^{\times}$ given by 
$(h, a) \mapsto (\T h^{-1}, a)$. Then the transpose map 
$x \mapsto \T x$ induces a 
$GL_{d}(\C) \times \C^{\times}$-equivariant isomorphism
$\tau\colon \Nn^{d} \xrightarrow{\sim} (\Nn^{d})^{\sigma}$.
We define an involutive auto-equivalence $\iota$ of $\Perv^{GL_{d}(\C) \times \C^{\times}}_{coh}(\Nn^{d})$
by 
$$\iota(\mathcal{F}) := \mathbb{D}_{\Nn^{d}} \circ \tau^{*}(\mathcal{F}^{\sigma}).
$$
Then we have $\iota(q^{1/2}) = q^{-1/2}$ at the level of Grothendieck group.
The following theorem was originally conjectured by Ostrik~\cite{Ost00}
and proved by Bezrukavnikov (see \cite[Introduction]{Bez03}).

\begin{Thm}[Bezrukavnikov]
 \label{Thm:Bezrukavnikov}
The $\Z[q^{\pm 1/2}]$-basis $\{ [\CC_\lambda] \mid \lambda \in X_{+} \}$ of
$K^{GL_{d}(\C) \times \C^{\times}}(\Nn^d)$
is characterized by the following properties:
\begin{itemize}
\item[(1)] $\iota[\CC_\lambda] = [\CC_\lambda]$;
\item[(2)] $[\CC_\lambda] \in [\nabla_{\lambda}] + \sum_{\lambda^{\prime} \in X_{+}} q^{-1} \Z[q^{-1}] [\nabla_{\lambda^{\prime}}]$.
\end{itemize}
\end{Thm}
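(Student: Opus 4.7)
The plan is to establish Theorem~\ref{Thm:Bezrukavnikov} by first proving the uniqueness of any basis satisfying (1) and (2), then verifying that the family $\{[\CC_\lambda]\}$ has both properties. For uniqueness, the quasi-exceptional collection structure yields a unitriangular change of basis between $\{[\nabla_\mu]\}$ and $\{[\Delta_\mu]\}$ over $\Z[q^{\pm 1/2}]$, and once bar-invariance is known for $\CC_\lambda$ one deduces that $\iota[\nabla_\mu]$ equals $[\Delta_\mu]$ plus strictly lower-order terms in a suitable order on $X_+$. Combined with $\iota(q^{1/2}) = q^{-1/2}$, the standard Kazhdan-Lusztig bar-involution argument forces any two $\iota$-invariant bases with leading term $[\nabla_\lambda]$ and off-diagonal coefficients in $q^{-1}\Z[q^{-1}]$ to coincide.

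For bar-invariance of $[\CC_\lambda]$, I would compute the action of $\iota$ on Andersen-Jantzen sheaves directly. Using $\omega_{\Nn^d} \cong \Oo_{\Nn^d}\langle d(d-1)\rangle$ (Remark~\ref{Rem:canonical sheaf Nn}), properness of the Springer resolution $\Sp$, and the observation that the transpose involution $\sigma$ sends a line bundle $\Oo_{\mathcal{B}_d}(\lambda)$ to $\Oo_{\mathcal{B}_d}(w_0 \lambda)$ (since transpose conjugates $B_d$ to the opposite Borel and implements a Cartan-type involution), one obtains $\iota(\Delta_\lambda) \cong \nabla_\lambda$ with the grading shifts $\delta_\lambda$ matching by~(\ref{Eq:delta}). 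Since $\CC_\lambda$ is defined as the image of the canonical morphism $\Delta_\lambda \to \nabla_\lambda$ and $\iota$ is a contravariant auto-equivalence, $\iota(\CC_\lambda)$ is realized as the image of $\iota(\nabla_\lambda) \to \iota(\Delta_\lambda)$, which agrees with the canonical morphism $\Delta_\lambda \to \nabla_\lambda$ by uniqueness (up to scalar) and the simplicity of $\CC_\lambda$, hence $\iota(\CC_\lambda) \cong \CC_\lambda$.

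The upper-triangularity property (2) is the deep content of the theorem and the expected main obstacle. Following Bezrukavnikov's coherent-constructible correspondence strategy~\cite{Bez06}, one identifies a mixed version of $\Perv_{coh}^{GL_d(\C)\times\C^\times}(\Nn^d)$ with a suitable category of mixed constructible perverse sheaves (on an affine flag variety of the Langlands dual group, for instance), in such a way that $\CC_\lambda$ corresponds to a constructible IC-sheaf and $\nabla_\lambda$ corresponds to a costandard object compatible with the perverse $t$-structure. Deligne's Riemann-Weil theorem, i.e.~the purity of constructible IC-sheaves, then yields on the constructible side the weight estimate asserting that the transition matrix from simples to costandards is unitriangular with off-diagonal entries in $q^{-1}\Z[q^{-1}]$; this translates directly into property (2) on the coherent side. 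The technical heart is precisely the construction of this correspondence and the verification that it carries the expected standard/simple objects to each other; once done the positivity statement reduces to Deligne's theorem as in the classical constructible setting, and the uniqueness argument above then completes the proof.
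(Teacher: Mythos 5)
The paper does not prove this theorem: it is attributed to Bezrukavnikov and cited from \cite{Bez03} (see also \cite{Bez06}, \cite{AH18}), with the introduction explaining that the deep $q^{-1}$-positivity estimate in (2) is obtained via the coherent-constructible correspondence and Deligne's purity theorem. Your sketch correctly reconstructs the architecture of Bezrukavnikov's argument, and your identification of the key input --- the correspondence between $\Perv_{coh}^{G\times\C^{\times}}(\Nn_{\mathfrak{g}})$ and a mixed constructible category on an affine flag variety of the Langlands dual group, reducing (2) to purity of constructible IC-sheaves --- matches what the paper's introduction says about the provenance of this result. You also correctly identify that the uniqueness is a formal Kazhdan--Lusztig-type triangularity argument and that $\iota$-invariance of $[\CC_{\lambda}]$ follows from $\iota(\Delta_{\lambda})\cong\nabla_{\lambda}$ together with $\CC_{\lambda}$ being the image of the canonical map $\Delta_{\lambda}\to\nabla_{\lambda}$.

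Two small imprecisions in your write-up, neither fatal. First, in your uniqueness paragraph you write that ``once bar-invariance is known for $\CC_{\lambda}$ one deduces that $\iota[\nabla_{\mu}]$ equals $[\Delta_{\mu}]$ plus strictly lower-order terms'' --- the logical order here is inverted: the identity $\iota(\nabla_{\mu})\cong\Delta_{\mu}$ is computed directly from the definition of $\iota$, the properness of the Springer map, and the effect of the transpose on the Borel (exactly as you do in your second paragraph), and it is this direct computation together with the known unitriangularity of $[\CC]\leftrightarrow[\nabla]$ that feeds the uniqueness argument; bar-invariance of $[\CC_{\lambda}]$ is an output, not an input, of this step. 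Second, your phrase ``plus strictly lower-order terms'' for $\iota[\nabla_{\mu}]$ is superfluous and slightly misleading, since your own computation gives $\iota(\Delta_{\lambda})\cong\nabla_{\lambda}$ on the nose (including the $\C^{\times}$-twists, by the $\delta_{\lambda}$-matching you invoke). With these corrected, your sketch is a fair account of the proof that the paper is delegating to \cite{Bez03} and \cite{Bez06}; the genuinely hard, non-formal content --- constructing the coherent-constructible equivalence and matching standard, costandard, and simple objects across it --- is appropriately deferred to Bezrukavnikov's work rather than reproduced.
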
 


\subsection{Comparison with the nilpotent cone}  \label{Ssec:comparison}

Towards a proof of Theorem~\ref{Thm:1}, let us compare 
$\Perv_{coh}^{GL_{n}(\Oo) \rtimes \C^{\times}}(\Gr_{GL_{n}})$ with 
$\Perv_{coh}^{GL_{d}(\C) \times \C^{\times}}(\mathcal{N}^{d})$. 
Fix two positive integers $n, d \in \N$ and  
consider the Schubert variety
$$
\Gr_{n}^{d} := \overline{\Gr}{}_{GL_{n}}^{d \omega_{1}} = \{ 
L \subset L_{0} \mid \dim(L_{0} / L) = d
\}.
$$
This is
a finite union of $GL_{n}(\Oo)$-orbits $\Gr_{GL_{n}}^{\nu}$
where $\nu$ runs over the set  
$$
P_{n}(d)
:=
\{ \nu =
(\nu_{1}, \ldots, \nu_{n}) \in \Z^{n} \mid
\nu_{1} \ge \cdots \ge \nu_{n} \ge 0, \,  
\nu_{1} + \cdots + \nu_{n} = d
\}
$$
of partitions of $d$ of length $\le n$, 
regarded as dominant coweights of $GL_{n}$ in the same way as before.
Let $\Dom_{n, d}$ be the set of dominant pairs $(\nu, \mu) \in \Dom_{n}$ with $\nu \in P_{n}(d)$.
Then the set $\{ [\Pp_{\nu, \mu}] \mid (\nu, \mu) \in \Dom_{n,d} \}$ forms 
a $\Z[q^{\pm 1/2}]$-basis of $K^{GL_{n}(\Oo) \rtimes \C^{\times}}(\Gr_{n}^{d})$.

We define the modified Grothendieck-Serre duality functor $\mathbb{D}_{\Gr_{n}^{d}}$ 
on $\Gr_{n}^{d}$ by
$$
\mathbb{D}_{\Gr_{n}^{d}} := \mathbb{R} \sHom(-, \mathcal{O}_{\Gr_{n}^{d}}) \langle d(n-1) \rangle.
$$ 

\begin{Rem}
 \label{Rem:canonical sheaf Gr}
The dualizing complex of the Schubert variety 
$\Gr_{n}^{d}$ is isomorphic to 
$$
\Oo_{\Gr_{GL_n}}(-n) \otimes \mathrm{det}(L_0 / t L_0)^{d}\{d(n-d)\} \langle d(n-1) \rangle
$$
(see \cite[Lemma 6.20]{CW18}).
Therefore we have 
\begin{equation} 
 \label{Eq:duality Gr}
\mathbb{D}_{\Gr_{n}^{d}}(\mathcal{F}) =  \mathbb{D} \circ \mathbb{L}(\mathcal{F})
\end{equation}
for any $\mathcal{F} \in D^{GL_{n}(\Oo) \rtimes \C^{\times}}_{coh}(\Gr_{n}^{d})$
(see Section~\ref{Ssec:CW} for the definition of $\mathbb{L}$).
\end{Rem}

Under the above notation,
we have the following morphism of quotient stacks:
$$
\psi\colon  
[(GL_{n}(\Oo) \rtimes \C^{\times}) \backslash \Gr_{n}^{d}] \to 
[(GL_{d}(\C) \times \C^{\times}) \backslash \Nn^{d}]; 
\quad
(L \subset L_{0})
\mapsto t |_{L_{0} / L}.
$$

\begin{Lem}
  \label{formally smooth}
The morphism $\psi$ is formally smooth.
\end{Lem}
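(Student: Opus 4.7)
The plan is to reduce formal smoothness of the stack morphism $\psi$ to the smoothness of an explicit atlas-level map. I would start by introducing the framed variety
\[
\tilde{Y} := \{(L, \phi) : L \subset L_0 \text{ a sublattice of codimension } d, \; \phi: L_0/L \xrightarrow{\sim} \C^d\},
\]
which is a principal $GL_d$-bundle over $\Gr_n^d$ and carries commuting actions of $GL_n(\Oo) \rtimes \C^\times$ and $GL_d$. The assignment $(L,\phi) \mapsto (v_i := \phi(\overline{e_i}),\ x := \phi \circ t \circ \phi^{-1})$ identifies $\tilde{Y}$ with the open subscheme
\[
\{(v_1, \ldots, v_n, x) \in (\C^d)^n \times \Nn^d : v_1, \ldots, v_n \text{ generate } \C^d \text{ as a } \C[x]\text{-module}\}
\]
of the affine variety $(\C^d)^n \times \Nn^d$; the inverse sends $(v_1,\ldots,v_n, x)$ to $L = \ker \pi$, where $\pi: L_0 = \Oo^n \to \C^d$ is defined by $e_i \mapsto v_i$ with $t$ acting as $x$ (well-defined since $x$ is nilpotent).

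Under this identification, the map $\tilde{\psi}\colon \tilde Y \to \Nn^d$ obtained as the restriction of the second projection $(\C^d)^n \times \Nn^d \to \Nn^d$ is the $GL_d \times \C^\times$-equivariant lift of $\psi$. Its $GL_n(\Oo)$-invariance is automatic: any $g \in GL_n(\Oo)$ is $\Oo$-linear and hence commutes with $t$ on $L_0$, so in the framed picture the conjugation by $g$ cancels in the expression $\phi \circ t \circ \phi^{-1}$. The $\C^\times$-equivariance matches the $4$-fold-cover convention $(1,a) \cdot (g(t), 1) = (g(a^4 t), a)$ with the scaling $a \cdot x = a^{-4} x$ on $\Nn^d$, as a direct computation with the loop-rotation action on $L_0/L$ shows.

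Now the key observation is immediate: since the second projection $(\C^d)^n \times \Nn^d \to \Nn^d$ is a trivial vector bundle (hence smooth), its restriction $\tilde{\psi}$ to an open subscheme is smooth. Combining this with the fact that $\tilde{Y} \to \Gr_n^d$ is a principal $GL_d$-bundle, and that $\Gr_n^d \to [(GL_n(\Oo) \rtimes \C^\times) \backslash \Gr_n^d]$ is a (smooth) atlas, we obtain a $2$-commutative square in which $\tilde{Y}$ is a smooth cover of the source stack, $\Nn^d$ is a smooth cover of the target stack, and the induced map of covers is smooth. By standard stack-theoretic descent of smoothness along smooth covers, this yields the formal smoothness of $\psi$.

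The only subtlety concerns the pro-algebraic group $GL_n(\Oo)$, but this is harmless: the $GL_n(\Oo)$-action on $\Gr_n^d$ factors through the finite-dimensional quotient $GL_n(\Oo/t^{d+1}\Oo)$, since $t^{d+1}$ annihilates every $L_0/L$ with $L \in \Gr_n^d$, so the entire argument remains within classical finite-dimensional algebraic geometry. I expect this bookkeeping to be the only point requiring mild care; the smoothness itself drops out transparently from the explicit open-subscheme description of $\tilde{Y}$ inside $(\C^d)^n \times \Nn^d$.
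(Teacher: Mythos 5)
Your proof is correct but takes a genuinely different route from the paper's. You build an explicit smooth atlas of the source stack---the framed variety $\tilde{Y}$, which is precisely the auxiliary space $\Mm_n^d$ that the paper introduces later in Section 4.3 for other purposes---and identify it with the open locus in $(\C^d)^n\times\Nn^d$ where the vectors $\C[x]$-generate $\C^d$; the lifted map $\tilde\psi$ is then visibly the restriction of the second projection, hence smooth, and smoothness of $\psi$ follows because smoothness is local on the source in the smooth topology. The paper instead verifies the infinitesimal lifting criterion by hand: after reducing to the finite-level group $GL_n(\Oo/t^N\Oo)$ and discarding the $\C^\times$-factor, it recasts the lifting problem as lifting a pair (a $\st$-stable flag of free modules together with the nilpotent operator $\st$) over a square-zero extension of local test rings, lifts the flag using smoothness of $GL_{nN}\backslash\Gr(d,nN)\to GL_d\backslash\on{pt}$, and then lifts the endomorphism via middle-exactness of a short diagram of $\End$-modules, which holds because everything in sight is free. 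Your atlas argument is more transparent and dovetails nicely with the later parts of the paper, where $\Mm_n^d$ and the correspondence $\Gr_n^d \leftarrow \Mm_n^d \rightarrow \Nn_n^d$ reappear; the paper's argument is more self-contained at the level of commutative algebra. Two small remarks: $GL_n(\Oo/t^d\Oo)$ already suffices (since $t^d$ kills $L_0/L$ for every $L\in\Gr_n^d$), and the assertion that $\Gr_n^d\to[(GL_n(\Oo)\rtimes\C^\times)\backslash\Gr_n^d]$ is a smooth atlas needs, to be literally true, exactly the reduction to a finite-level quotient that both you and the paper invoke, since the kernel $\ker\bigl(GL_n(\Oo)\to GL_n(\Oo/t^N\Oo)\bigr)$ is a pro-unipotent (hence pro-smooth) group acting trivially.
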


\begin{proof}
  For a fixed $d$ and $N\gg0,\ \Gr_{n}^{d}=\{L : L_0\supset L\supset t^NL_0\}$.
  We have an evident morphism
  $[(GL_{n}(\Oo/t^N\Oo) \rtimes \C^{\times}) \backslash \Gr_{n}^{d}]\to
  [(GL_{n}(\Oo) \rtimes \C^{\times}) \backslash \Gr_{n}^{d}]$, and by an abuse
  of notation we will denote by
  $\psi\colon[(GL_{n}(\Oo/t^N\Oo) \rtimes \C^{\times}) \backslash \Gr_{n}^{d}] \to 
  [(GL_{d}(\C) \times \C^{\times}) \backslash \Nn^{d}]$ the composition of the former $\psi$
  with the above evident morphism. It suffices to prove that the new $\psi$ is smooth.
  Moreover, we will keep the same notation $\psi$ for the similar morphism
  $[GL_{n}(\Oo/t^N\Oo) \backslash \Gr_{n}^{d}] \to 
  [GL_{d}(\C) \backslash \Nn^{d}]$ (disregarding the extra $\C^\times$-equivariance).
  It suffices to prove that the latter $\psi$ is smooth.
  
  Given an affine test scheme $S=\Spec A$ along with its nilpotent extension 
$\wt{S}=\Spec\wt{A},\ A=\wt{A}/I$, and a morphism $\ul{\varphi}\colon S\to
[GL_{n}(\Oo/t^N\Oo)\backslash \Gr_{n}^{d}]$ along with
an extension
\begin{equation*}
  \wt{\varphi}\colon \wt{S}\to[GL_{d}(\C)\backslash \Nn^{d}]\
\on{of}\ \varphi:=\psi\circ\ul{\varphi}\colon S\to
[GL_{d}(\C)\backslash \Nn^{d}]
\end{equation*}
we have to find an extension $\ul{\wt\varphi}\colon\wt{S}\to
[GL_{n}(\Oo/t^N\Oo)\backslash \Gr_{n}^{d}]$.

We may and will assume that $A$ and $\wt{A}$ are local, hence the projective
modules are free. Then an $S$-point $\varphi$ is a free $A$-module $M$
of rank $d$ with a nilpotent endomorphism $\st\in\End_A(M)$. Similarly, an
$\wt{S}$-point $\wt\varphi$ is a free $\wt{A}$-module $\wt{M}$ of rank $d$
with a nilpotent endomorphism $\wt\st\in\End_{\wt{A}}(\wt{M})$.
An $S$-point $\ul{\varphi}$ is a free $A$-module $\ul{M}$ of rank
$nN$ with a nilpotent endomorphism $\ul\st$ ``of Jordan type $N^n$'' and
a $\ul\st$-invariant (locally) free $A$-submodule $\ul{M}'\subset\ul{M}$ such that
the quotient $\ul{M}/\ul{M}'$ is free of rank $d$.
Finally, an $\wt{S}$-point $\ul{\wt\varphi}$ is a free $\wt A$-module
$\ul{\wt M}$ of rank
$nN$ with a nilpotent endomorphism $\ul{\wt\st}$ ``of Jordan type $N^n$'' and
a $\ul{\wt\st}$-invariant (locally) free $\wt A$-submodule
$\ul{\wt M}{}'\subset\ul{\wt M}$ such that
the quotient $\ul{\wt M}/\ul{\wt M}{}'$ is free of rank $d$.

We have to prove that given $(\wt{M},\wt\st)$ and $(\ul{M}'\subset\ul{M},\ul\st)$
as above giving rise
to the same $(\wt{M}/I,\wt\st\pmod{I})=(M,\st)=(\ul{M}/\ul{M}',\ul\st\pmod{\ul{M}'})$,
there exists $(\ul{\wt M}{}'\subset\ul{\wt M},\ul{\wt\st})$ as above such that
$(\ul{\wt M}{}'\pmod{I}\subset\ul{\wt M}\pmod{I},\ul{\wt\st}\pmod{I})=
(\ul{M}'\subset\ul{M},\ul\st)$, while
$(\ul{\wt M}/\ul{\wt M}{}',\ul{\wt\st}\pmod{\ul{\wt M}{}'})=(\wt{M},\wt\st)$.
If we disregard the nilpotent operators, then the existence of the desired
extension $\ul{\wt M}{}'\subset\ul{\wt M}$ follows from the smoothness of
the evident morphism $GL_{nN}\backslash\Gr(d,nN)\to GL_{d}\backslash\on{pt}$
(and is evident by itself).

So it remains to prove that the sequence
\begin{equation*}
\End_{\wt A}(\ul{\wt M}{}'\subset\ul{\wt M})\to\End_{\wt A}(\wt{M})\oplus
\End_A(\ul{M}'\subset\ul{M})\to\End_A(M)
\end{equation*}
(see the diagram below) is exact in the middle term:

\begin{equation*}
  \begin{CD}
\End_{\wt A}(\wt{M}) @<<< \End_{\wt A}(\ul{\wt M}{}'\subset\ul{\wt M})\\
  @VVV @VVV \\
  \End_A(M) @<<< \End_A(\ul{M}'\subset\ul{M}).
  \end{CD}
\end{equation*}
This is clear since all our modules are free, and moreover, we can find a complementary
free submodule $\ul{\wt M}{}''\subset\ul{\wt M}$ such that
$\ul{\wt M}=\ul{\wt M}{}''\oplus\ul{\wt M}{}'$.
\end{proof}

\begin{Cor}
  \label{faithfully flat}
  The morphism $\psi\colon  
[(GL_{n}(\Oo) \rtimes \C^{\times}) \backslash \Gr_{n}^{d}] \to 
[(GL_{d}(\C) \times \C^{\times}) \backslash \Nn^{d}]$
is flat.
\end{Cor}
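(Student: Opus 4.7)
The plan is to deduce flatness of $\psi$ from the formal smoothness established in Lemma~\ref{formally smooth}, using the standard principle that formally smooth morphisms of finite presentation between noetherian Artin stacks are smooth, and that smooth morphisms are automatically flat.

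The one subtlety is that the source stack $[(GL_n(\Oo) \rtimes \C^\times) \backslash \Gr_n^d]$ involves the infinite-dimensional group $GL_n(\Oo)$, so ``finite presentation'' is not immediate. To handle this, as in the proof of Lemma~\ref{formally smooth}, I would first pass to the finite-type morphism
\[
\psi_N\colon [(GL_n(\Oo/t^N\Oo) \rtimes \C^\times) \backslash \Gr_n^d] \to [(GL_d(\C) \times \C^\times) \backslash \Nn^d]
\]
for $N$ large enough (say $N \ge d$, so that the kernel $K_N := \ker(GL_n(\Oo) \to GL_n(\Oo/t^N\Oo))$ acts trivially on $\Gr_n^d$). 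The evident morphism between the two source stacks is then a gerbe banded by the pro-smooth group $K_N$; in particular, it is faithfully flat, so flatness of $\psi$ is equivalent to flatness of $\psi_N$.

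Next, $\psi_N$ is of finite presentation between noetherian Artin stacks of finite type over $\C$, and it is formally smooth by Lemma~\ref{formally smooth}. By the standard characterization of smoothness for Artin stacks (formally smooth plus locally of finite presentation implies smooth), $\psi_N$ is smooth, and hence flat.

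I do not anticipate any significant obstacle: this corollary is a direct formal consequence of Lemma~\ref{formally smooth}, together with the standard identification of formal smoothness with smoothness under the finite-presentation hypothesis, and the trivial observation that smooth morphisms are flat.
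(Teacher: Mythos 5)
Your proposal is correct and coincides with the paper's argument: the paper's proof simply cites \cite[Th\'eor\`eme~17.5.1]{ega}, which is exactly the standard equivalence ``formally smooth $+$ locally of finite presentation $\Rightarrow$ smooth,'' after which flatness is immediate. Your extra remarks about first passing to the finite-type quotient $GL_n(\Oo/t^N\Oo)$ only make explicit a reduction that the proof of Lemma~\ref{formally smooth} already performs, so the approach is the same.
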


\begin{proof}
  This is~\cite[Th\'eor\`eme~17.5.1]{ega}.
\end{proof}
  
Therefore the pull-back along $\psi^{*}$ induces a triangulated functor
$$
\psi^{*}\colon D^{GL_{d}(\C) \times \C^{\times}}_{coh}(\Nn^{d}) \to 
D^{GL_{n}(\Oo) \rtimes \C^{\times}}_{coh}(\Gr_{n}^{d}).
$$  

In what follows, we restrict ourselves to the open subvariety 
$$\Nn_{n}^{d} := \bigsqcup_{\nu \in P_{n}(d)} \OO_{\nu} \subset \Nn^{d}.$$
Let $\mathbf{O}_{n, d}$ be the set of pairs $(\nu, V) \in \mathbf{O}_{d}$ with $\nu \in P_{n}(d)$. 
Then the set $\{ [\Cc_{\nu, V}] \mid (\nu, V) \in \mathbf{O}_{n,d}\}$ forms a $\Z[q^{\pm 1/2}]$-basis
of $K^{GL_{d}(\C) \rtimes \C^{\times}}(\Nn_{n}^{d})$.
We will keep the same notation $AJ(\lambda), \Delta_{\lambda}, \nabla_{\lambda}$
for their restrictions to $\Nn_{n}^{d}$.

By construction, the morphism $\psi$ has its image in the open substack $[ (GL_{d}(\C) \times \C^{\times}) \backslash \Nn_{n}^{d}]$.
More precisely, for each $\nu \in P_{n}(d)$, the morphism $\psi$ sends the $GL_{n}(\Oo)$-orbit
$\Gr_{GL_{n}}^{\nu}$ to the $GL_{d}(\C)$-orbit $\OO_{\nu}$.
Thus we have obtained the triangulated functor:
$$
\psi^{*}\colon D^{GL_{d}(\C) \times \C^{\times}}_{coh}(\Nn^{d}_{n}) \to 
D^{GL_{n}(\Oo) \rtimes \C^{\times}}_{coh}(\Gr_{n}^{d}).
$$  

\begin{Def}
We define the triangulated functor 
$$\Psi\colon D^{GL_{d}(\C) \times \C^{\times}}_{coh}(\Nn^{d}_{n}) \to 
D^{GL_{n}(\Oo) \rtimes \C^{\times}}_{coh}(\Gr_{n}^{d}) \quad
\text{by  
$
\Psi (-) := \psi^{*}(-) \langle d(n-1)/2 \rangle.  
$}
$$
\end{Def}

\begin{Prop}
 \label{Prop:Psi}
The functor $\Psi$ satisfies the following properties:
\begin{enumerate}
\item
 \label{Prop:Psi:t-exact}
$\Psi$ is
$t$-exact with respect to 
the perverse $t$-structures of both sides. Therefore it induces an exact functor 
between abelian categories:
$$
\Psi\colon 
\Perv_{coh}^{GL_{d}(\C) \times \C^{\times}}(\mathcal{N}_{n}^{d}) \to
\Perv_{coh}^{GL_{n}(\Oo) \rtimes \C^{\times}}(\Gr_{n}^{d});
$$
\item
 \label{Prop:Psi:minimal extension}
$\Psi$ is compatible with the $\IC$-extensions, i.e.~for any $\nu \in P_{n}(d)$, 
we have $$\Psi \circ (j_{\nu})_{!*} \simeq (i_{\nu})_{!*} \circ \Psi;$$
\item \label{Prop:Psi:duality}
$\Psi$ is compatible with the duality functors, i.e.~we have
$$
\Psi \circ \mathbb{D}_{\Nn^{d}_{n}} \simeq \mathbb{D}_{\Gr_{n}^{d}} \circ \Psi.  
$$
\end{enumerate} 
\end{Prop}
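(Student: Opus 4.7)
The plan is to exploit the smoothness of $\psi$ established in Lemma~\ref{formally smooth} (which gives flatness by Corollary~\ref{faithfully flat}) and to bookkeep all cohomological shifts through the arithmetic identity
\begin{equation*}
\tfrac{1}{2}\dim \Gr_{GL_{n}}^{\nu} + \tfrac{1}{2}\codim \OO_{\nu} = \tfrac{1}{2}d(n-1), \qquad \nu \in P_{n}(d),
\end{equation*}
which follows by direct summation from the explicit formulas $\dim \Gr_{GL_{n}}^{\nu} = \sum_{k}(n+1-2k)\nu_{k}$ and $\codim \OO_{\nu} = \sum_{k}(2k-1)\nu_{k}-d$ recalled earlier in the excerpt. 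This identity says that the two perversity functions $-\tfrac{1}{2}\dim \Gr_{GL_{n}}^{\nu}$ and $\tfrac{1}{2}\codim \OO_{\nu}$ differ by the constant $\tfrac{1}{2}d(n-1)$, and the twist $\langle d(n-1)/2\rangle$ appearing in the definition of $\Psi$ is designed precisely to compensate for this difference.

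For part \eqref{Prop:Psi:t-exact}, I would check the two support conditions of the perverse $t$-structure stratum by stratum. Let $\psi_{\nu}$ denote the base change of $\psi$ along $j_{\nu}\colon \OO_{\nu}\hookrightarrow \Nn_{n}^{d}$; it is again smooth, in particular flat. Flat base change gives $i_{\nu}^{*}\psi^{*}\cong \psi_{\nu}^{*}j_{\nu}^{*}$, and since $\psi_{\nu}^{*}$ is exact on coherent sheaves, the $*$-support assumption on $\mathcal{G}$ together with the shift $\langle d(n-1)/2\rangle$ gives the required bound for $\Psi(\mathcal{G})$ thanks to the identity above. The $!$-support condition is handled by a parallel argument using the base-change isomorphism $i_{\nu}^{!}\psi^{*}\cong \psi_{\nu}^{!}j_{\nu}^{!}$ obtained from the six-functor adjunctions for the Cartesian square, together with the fact that smoothness of $\psi_{\nu}$ makes $\psi_{\nu}^{!}$ and $\psi_{\nu}^{*}$ differ only by a line-bundle twist and cohomological shift that match those on the Grassmannian side.

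For part \eqref{Prop:Psi:minimal extension}, I would use the standard characterization of $(j_{\nu})_{!*}\mathcal{V}$ as the image, in the perverse heart, of the canonical morphism ${}^{p}\mathcal{H}^{0}(j_{\nu})_{!}\mathcal{V}\to {}^{p}\mathcal{H}^{0}(j_{\nu})_{*}\mathcal{V}$. Since $\Psi$ is $t$-exact by part \eqref{Prop:Psi:t-exact} and flat base change yields natural isomorphisms $\psi^{*}(j_{\nu})_{*}\cong (i_{\nu})_{*}\psi_{\nu}^{*}$ (together with the analogous identification for shriek-extension up to a matching twist coming from the smoothness of $\psi$), the functor $\Psi$ transports this image to the image of the analogous morphism on the Grassmannian side, which is by definition $(i_{\nu})_{!*}\Psi(\mathcal{V})$. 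For part \eqref{Prop:Psi:duality}, the key input is flat base change for internal Hom: since $\psi$ is flat and $\psi^{*}\mathcal{O}_{\Nn_{n}^{d}}\cong \mathcal{O}_{\Gr_{n}^{d}}$, we have $\psi^{*}\mathbb{R}\sHom(\mathcal{F},\mathcal{O}_{\Nn_{n}^{d}})\cong \mathbb{R}\sHom(\psi^{*}\mathcal{F},\mathcal{O}_{\Gr_{n}^{d}})$ for $\mathcal{F}\in D^{b}_{coh}$. Combined with the elementary rule $\mathbb{R}\sHom(\mathcal{H}\langle m\rangle,-)\cong \mathbb{R}\sHom(\mathcal{H},-)\langle -m\rangle$, both $\Psi\circ\mathbb{D}_{\Nn_{n}^{d}}(\mathcal{F})$ and $\mathbb{D}_{\Gr_{n}^{d}}\circ\Psi(\mathcal{F})$ reduce to $\mathbb{R}\sHom(\psi^{*}\mathcal{F},\mathcal{O}_{\Gr_{n}^{d}})\langle d(n-1)/2\rangle$, since the shifts combine as $-d(n-1)/2+d(n-1)=d(n-1)/2$.

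The main obstacle I anticipate is to rigorously justify the various derived base-change statements in the equivariant stacky setting, where the source stack is presented by the pro-algebraic group $GL_{n}(\Oo)\rtimes\C^{\times}$. The standard remedy, already employed in the proof of Lemma~\ref{formally smooth}, is to replace $GL_{n}(\Oo)$ by the finite-type quotient $GL_{n}(\Oo/t^{N}\Oo)$ for $N\gg 0$ depending on $d$, reducing all base-change statements to classical theorems for morphisms between Artin stacks of finite type.
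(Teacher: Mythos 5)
Your proposal is correct in substance and follows essentially the same route as the paper: establish the arithmetic identity $\dim\Gr_{GL_n}^{\nu}+\codim\OO_{\nu}=d(n-1)$ so that the shift $\langle d(n-1)/2\rangle$ realigns the two perversity functions, and then exploit flatness and the Gorenstein/smoothness of $\psi$ to carry the perversity conditions and the duality across. Where the paper simply invokes \cite[Lemma~3.4]{AB10} after verifying that $\psi$ is faithfully flat and Gorenstein (using the dualizing complex identifications of Remarks~\ref{Rem:canonical sheaf Nn} and~\ref{Rem:canonical sheaf Gr}), you unpack that lemma by checking the two support conditions stratum by stratum; and for part~(\ref{Prop:Psi:duality}), where the paper passes through $\psi^{!}(-)=(\mathbb{L}\circ\psi^{*})(-)\langle d(d-n)\rangle$, you instead use flat base change for $\mathbb{R}\sHom(-,\Oo)$ directly, together with the same shift arithmetic. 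Both routes are legitimate; the paper's is shorter because it delegates to a citation, yours is more self-contained.

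Two small imprecisions are worth flagging. First, the displayed base-change "$i_{\nu}^{!}\psi^{*}\cong\psi_{\nu}^{!}j_{\nu}^{!}$" is not literally an isomorphism of functors: the correct functorial identity is $i_{\nu}^{!}\psi^{!}\cong\psi_{\nu}^{!}j_{\nu}^{!}$, and one then converts $\psi^{!}$ to $\psi^{*}$ at the cost of tensoring by the relative dualizing line bundle and shifting; your next clause acknowledges this, so the conclusion is right, but the formula as written omits the twist. Second, your appeal to "flat base change for internal Hom" is fine here because $\mathcal{F}$ is a bounded complex with coherent cohomology and the second argument $\Oo$ is a bounded sheaf (so one can compute with a bounded-above locally free resolution and apply the flat, hence exact, $\psi^{*}$ degreewise), but it is worth stating that hypothesis: the isomorphism $\psi^{*}\mathbb{R}\sHom(\mathcal{F},\mathcal{G})\cong\mathbb{R}\sHom(\psi^{*}\mathcal{F},\psi^{*}\mathcal{G})$ is not automatic for arbitrary $\mathcal{F}$ and merely flat $\psi$. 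You also correctly anticipate the stacky issue and the remedy of passing to finite-type quotients $GL_{n}(\Oo/t^{N}\Oo)$, which is exactly what the paper does in the proof of Lemma~\ref{formally smooth}.
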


\begin{proof}
Note that the cohomological degree shift $[d(n-1)/2]$ in the definition of $\Psi$ arises from the fact that
$d(n-1) = \codim \OO_{\nu} + \dim \Gr_{GL_{n}}^{\nu}$ for any $\nu \in P_{n}(d)$.

For (\ref{Prop:Psi:t-exact}), we apply \cite[Lemma 3.4]{AB10}. 
To do so,
we have to check that the morphism $\psi$ is faithfully flat and Gorenstein. 
For the faithful flatness, thanks to Lemma~\ref{formally smooth}, it suffices to show that given a local $\C$-algebra $A$ 
the morphism $\psi$ yields a surjective map $[(GL_{n}(\Oo) \rtimes \C^{\times}) \backslash \Gr_{n}^{d}] (A) \twoheadrightarrow 
[(GL_{d}(\C) \times \C^{\times}) \backslash \Nn_{n}^{d}](A)$
of the sets of $A$-points. 
(Or instead, we may show that the morphism $\psi_2 \colon \Mm_{n}^{d} 
\to \Nn_{n}^{d}$ defined below is surjective as a morphism between the schemes associated with the varieties.)
This can be proved easily by the definition of $\Nn_{n}^{d}$.
For the Gorenstein property, it suffices to show that
$\psi^{!} \Oo_{\Nn^{d}}$ is a cohomological degree shift of an invertible sheaf
(see \cite[Exercise V.9.5]{Har66}).  
Since the dualizing complex of $\Nn^{d}$ is isomorphic to 
$\Oo_{\Nn^{d}} \langle d(d-1) \rangle$ (see Remark~\ref{Rem:canonical sheaf Nn}),
the sheaf $\psi^{!} (\Oo_{\Nn^{d}})$ is isomorphic to the dualizing complex of 
$\Gr_{n}^{d}$ up to cohomological degree shift and $\C^{\times}$-equivariant twist,
which is also known to be a cohomological degree shift of an invertible sheaf (see Remark~\ref{Rem:canonical sheaf Gr}).

Now (\ref{Prop:Psi:minimal extension}) can be proved in a similar way 
by the definition of the minimal extension functors.
See \cite[Theorem 4.2]{AB10}.     

The remaining assertion (\ref{Prop:Psi:duality}) follows from Remarks~\ref{Rem:canonical sheaf Nn} \& \ref{Rem:canonical sheaf Gr}
and the fact 
$$\psi^{!} (-) = \psi^{*} (-) \otimes \omega_{\Gr_{n}^{d} / \Nn_{n}^{d}} 
= (\mathbb{L} \circ \psi^{*}) (-) \langle d(d-n) \rangle.
$$ 
See \cite[Remark on pp.~143--144]{Har66} for the first equality.
\end{proof}

For a technical reason, we will introduce an auxiliary space.  
Let $\Mm_{n}^{d}$ be the variety of pairs $(L, \gamma)$ such that:
\begin{itemize}
\item[(i)] 
$L$ is a $\Oo$-lattice of $\Kk^{n}$ such that 
$\dim L_{0} /L = d$;
\item[(ii)] 
$\gamma$ is a $\C$-linear isomorphism $\C^{d} \xrightarrow{\sim} L_{0}/L$. 
\end{itemize}
We equip the space $\Mm_{n}^{d}$
with a left action of the group $GL_{d}(\C) \times GL_{n}(\Oo) \rtimes \C^{\times}$
by
$$
(h, g(t), a) \cdot (L, \gamma) := ( (g(t), a) L, (g(t), a) \circ \gamma \circ h^{-1}),
$$
where $h \in GL_{d}(\C),\ g(t) \in GL_{n}(\Oo),\ a \in \C^{\times}$ and
in the 2nd entry of the right hand side the element $(g(t), a) \in GL_{n}(\Oo) \rtimes \C^{\times}$
is regarded as a $\C$-linear isomorphism
$L_{0}/L \xrightarrow{\sim} L_{0} / (g(t), a) L$.
Then we can consider the following diagram:
$$
\Gr_{n}^{d} \xleftarrow{\psi_{1}} \Mm_{n}^{d} 
\xrightarrow{\psi_{2}} \Nn_{n}^{d}. 
$$ 
Here the morphism $\psi_{1}$ is the first projection 
$(L, \gamma) \mapsto L$ and the morphism $\psi_{2}$ is given by 
$$
\psi_{2}(L, \gamma) := \gamma^{-1} \circ t|_{L_{0}/L} \circ \gamma. 
$$   
The morphisms $\psi_{1}$ and $\psi_{2}$ are equivariant 
with respect to the actions of the group 
$GL_{d}(\C) \times GL_{n}(\Oo) \rtimes \C^{\times}$.
Here we understand that the group $GL_{d}(\C)$ (resp.~$GL_{n}(\Oo)$) acts trivially 
on $\Gr_{n}^{d}$ (resp.~$\Nn_{n}^{d}$).
Since $\psi_{1}$ is a principal $GL_{d}(\C)$-bundle, the pull-back functor 
gives an equivalence of triangulated categories:
$$
\psi_{1}^{*}\colon D^{GL_{d}(\C) \times GL_{n}(\Oo) \rtimes \C^{\times}}_{coh}(\Mm_{n}^{d})
\xrightarrow{\sim} 
D^{GL_{n}(\Oo) \rtimes \C^{\times}}_{coh}(\Gr_{n}^{d}).  
$$ 
We fix a quasi-inverse of $\psi_{1}^{*}$ and denote it by $(\psi_{1}^{*})^{-1}$.

\begin{Lem}
There is an isomorphism of functors $\psi^{*} \simeq (\psi_{1}^{*})^{-1} \circ \psi_{2}^{*}$. 
\end{Lem}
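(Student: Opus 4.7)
The plan is to realize the lemma as a statement about pullbacks along a commutative diagram of quotient stacks, using the fact that $\psi_{1}$ is a principal $GL_{d}(\C)$-bundle.

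First, I would observe that the morphism $\psi_{1}\colon \Mm_{n}^{d} \to \Gr_{n}^{d}$ realizes $\Mm_{n}^{d}$ as a principal $GL_{d}(\C)$-bundle over $\Gr_{n}^{d}$. Indeed, the fiber of $\psi_{1}$ over a lattice $L \subset L_{0}$ is the set of $\C$-linear isomorphisms $\C^{d} \xrightarrow{\sim} L_{0}/L$, which is a torsor for the group $GL_{d}(\C)$ acting by $h \cdot (L, \gamma) := (L, \gamma \circ h^{-1})$. Because $\psi_{1}$ is $GL_{d}(\C) \times GL_{n}(\Oo) \rtimes \C^{\times}$-equivariant (with $GL_{d}(\C)$ acting trivially on $\Gr_{n}^{d}$), this yields an isomorphism of quotient stacks
\begin{equation*}
\bar{\psi}_{1}\colon [(GL_{d}(\C) \times GL_{n}(\Oo) \rtimes \C^{\times}) \backslash \Mm_{n}^{d}]
\xrightarrow{\sim}
[(GL_{n}(\Oo) \rtimes \C^{\times}) \backslash \Gr_{n}^{d}],
\end{equation*}
which is precisely the geometric content of the assumed fact that $\psi_{1}^{*}$ is an equivalence of derived categories.

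Next, I would verify that the diagram
\begin{equation*}
\begin{CD}
[(GL_{d}(\C) \times GL_{n}(\Oo) \rtimes \C^{\times}) \backslash \Mm_{n}^{d}] @>\bar{\psi}_{2}>> [(GL_{d}(\C) \times \C^{\times}) \backslash \Nn_{n}^{d}] \\
@V\bar{\psi}_{1}V\wr V @| \\
[(GL_{n}(\Oo) \rtimes \C^{\times}) \backslash \Gr_{n}^{d}] @>\psi>> [(GL_{d}(\C) \times \C^{\times}) \backslash \Nn_{n}^{d}]
\end{CD}
\end{equation*}
commutes. This is a direct unwinding of definitions: starting from a class $[(L, \gamma)]$, travelling along $\bar{\psi}_{1}$ and then $\psi$ gives the class of the endomorphism $t|_{L_{0}/L}$ taken modulo the $GL_{d}(\C)$-action, while travelling along $\bar{\psi}_{2}$ gives the class of $\gamma^{-1} \circ t|_{L_{0}/L} \circ \gamma$, and conjugation by $\gamma \in GL_{d}(\C)$ identifies these two classes in the target stack. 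The $\C^{\times}$-equivariance is compatible on both sides by construction.

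Applying $*$-pullback to this commutative diagram of quotient stacks yields an isomorphism $\bar{\psi}_{1}^{*} \circ \psi^{*} \simeq \bar{\psi}_{2}^{*}$ of functors; composing both sides with the chosen quasi-inverse $(\psi_{1}^{*})^{-1}$ gives the desired natural isomorphism $\psi^{*} \simeq (\psi_{1}^{*})^{-1} \circ \psi_{2}^{*}$. There is no real obstacle here beyond the bookkeeping of equivariance; the only point that might seem delicate is the identification of the equivariant derived category with the derived category of the quotient stack, but this is standard and is effectively built into the framing of the statement.
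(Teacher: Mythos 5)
Your argument is correct and is precisely what the paper means by its one-line proof ("this is obvious from the construction"): the quotient stack $[(GL_{d}(\C) \times GL_{n}(\Oo) \rtimes \C^{\times}) \backslash \Mm_{n}^{d}]$ is identified with $[(GL_{n}(\Oo) \rtimes \C^{\times}) \backslash \Gr_{n}^{d}]$ via $\psi_{1}$, the resulting triangle commutes, and pullback gives the claimed isomorphism of functors. Same approach, just spelled out.
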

\begin{proof}
This is obvious from the construction. 
\end{proof}

For each $\nu = (\nu_{1}, \ldots, \nu_{n}) \in P_{n}(d)$, we define 
a $\C$-linear isomorphism 
$$\gamma_{\nu}\colon \C^{d} \xrightarrow{\sim} L_{0} / (t^{\nu}L_{0}) \quad
\text{
by 
$v_{j} \mapsto t^{-(\nu_{1} + \cdots + \nu_{k-1})+j-1} u_{k} \mod t^{\nu} L_{0}$}
$$
for each $1 \le k \le n$ and $\nu_{1} + \cdots + \nu_{k-1} < j \le \nu_{1} + \cdots + \nu_{k}$,
where $\{ v_{j} \in \C^{d} \mid 1 \le j \le d \}$ is the standard $\C$-basis of $\C^{d}$ 
and $\{ u_{k} \in \Kk^{n} \mid 1 \le k \le n\}$ is the standard $\Kk$-basis of $\Kk^{n}$.  
The point $p_{\nu} := (t^{\nu} L_{0}, \gamma_{\nu}) \in \Mm_{n}^{d}$ satisfies
$\psi_{1}(p_{\nu}) = [t^{\nu}]$ and $\psi_{2}(p_{\nu}) = J_{\nu}$.
Then the natural projections
$$
GL_{n}(\Oo) \rtimes \C^{\times} \leftarrow
GL_{d}(\C) \times GL_{n}(\Oo) \rtimes \C^{\times} \rightarrow
GL_{d}(\C) \times \C^{\times}
$$
induce the homomorphisms of stabilizers
$$
\Stab_{GL_{n}(\Oo) \rtimes \C^{\times}}[t^{\nu}] \xleftarrow{\sim}
\Stab_{GL_{d}(\C) \times GL_{n}(\Oo) \rtimes \C^{\times}} (p_{\nu}) \to
\Stab_{GL_{d}(\C) \times \C^{\times}}(J_{\nu}),
$$
where the left one is an isomorphism because $\psi_{1}$ is a $GL_{d}(\C)$-bundle.
Let 
$$\rho\colon \Stab_{GL_{n}(\Oo) \rtimes \C^{\times}}[t^{\nu}] \to \Stab_{GL_{d}(\C) \times \C^{\times}}(J_{\nu})$$
be the group homomorphism obtained by composing the above two homomorphisms.
This homomorphism $\rho$ induces an isomorphism between the subgroups
$$\rho_{1}\colon \Stab_{GL_{n}(\Oo)}^{red}[t^{\nu}] \xrightarrow{\sim} \Stab_{GL_{d}(\C)}^{red}(J_{\nu}).$$
In particular, 
the assignment $(\nu, \mu) \mapsto (\nu, V_{\mu})$ 
defines a bijection $\Dom_{n,d} \xrightarrow{\sim} \mathbf{O}_{n,d}$.
Henceforth we identify $\mathbf{O}_{n,d}$ with $\Dom_{n,d}$ via this bijection
and we write $\Cc_{\nu, \mu}$ instead of $\Cc_{\nu, V_{\mu}}$.

\begin{Lem}
 \label{Lem:image IC}
For any $(\nu, \mu) \in \Dom_{n,d}$, we have an isomorphism
$$
\Psi \left( \Cc_{\nu, \mu} \{- \langle \omega_{n}, \mu \rangle \} \right) \cong 
\Pp_{\nu, \mu}. 
$$
In particular, the functor $\Psi$ induces
 an isomorphism\footnote{see~\S\ref{erratum} for a correction of this claim.} of $K$-groups
$$
[\Psi]\colon
K^{GL_{d}(\C) \times \C^{\times}}(\mathcal{N}_{n}^{d}) \xrightarrow{\sim} 
K^{GL_{n}(\Oo) \rtimes \C^{\times}}(\Gr_{n}^{d}),
$$
which gives a bijective correspondence between the classes of simple perverse coherent sheaves.
\end{Lem}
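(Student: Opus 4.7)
By Proposition~\ref{Prop:Psi}, the functor $\Psi$ is $t$-exact and intertwines the IC-extension functors $(j_{\nu})_{!*}$ and $(i_{\nu})_{!*}$. The strategy is therefore to reduce the claim to the corresponding statement at the level of simple equivariant vector bundles on the open orbits $\OO_{\nu}\subset\Nn_{n}^{d}$ and $\Gr^{\nu}_{GL_{n}}\subset\Gr_{n}^{d}$. Concretely, it suffices to construct an isomorphism
\[
\psi^{*}\!\left(\mathcal{V}\langle\tfrac{1}{2}\codim\OO_{\nu}\rangle\{-\langle\omega_{n},\mu\rangle\}\right)\langle\tfrac{1}{2}d(n-1)\rangle
\;\cong\;
\mathcal{V}_{\mu}\langle\tfrac{1}{2}\dim\Gr^{\nu}_{GL_{n}}\rangle\{-\langle\nu,\mu\rangle\}
\]
of $GL_{n}(\Oo)\rtimes\C^{\times}$-equivariant sheaves. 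The purely cohomological degree shifts already balance, since $\dim\Gr^{\nu}_{GL_{n}}+\codim\OO_{\nu}=d(n-1)$; only the equivariant structure and the $\C^{\times}$-twist need to be matched.

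To compute the pullback explicitly, I would factor $\psi^{*}\simeq(\psi_{1}^{*})^{-1}\circ\psi_{2}^{*}$ through the auxiliary variety $\Mm_{n}^{d}$ and work at the distinguished point $p_{\nu}\in\Mm_{n}^{d}$, which satisfies $\psi_{1}(p_{\nu})=[t^{\nu}]$ and $\psi_{2}(p_{\nu})=J_{\nu}$. The fiber of $\psi^{*}\mathcal{V}$ at $[t^{\nu}]$ is then $V_{\mu}$ endowed with the module structure for $\Stab_{GL_{n}(\Oo)\rtimes\C^{\times}}[t^{\nu}]$ obtained by pullback along the group homomorphism $\rho$. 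The isomorphism $\rho_{1}$ of reductive Levis identifies the irreducible representation $V_{\mu}$ on one side with $V_{\mu}$ on the other, realizing the stated bijection $\Dom_{n,d}\xrightarrow{\sim}\mathbf{O}_{n,d}$ at the level of underlying equivariant vector bundles.

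The main technical step is the analysis of $\rho$ restricted to the loop rotation $\C^{\times}$. Using the Levi embedding $(g,a)\mapsto(g\phi_{\nu}(a),a)$ on the nilpotent side, the generator $(1,a)$ is sent to an element whose Levi component has the form $(c_{\nu}(a),a)$, where $c_{\nu}$ is a central cocharacter of $\Stab^{red}_{GL_{d}(\C)}(J_{\nu})$ measuring the discrepancy between the loop-rotation action on $L_{0}/t^{\nu}L_{0}$ (read off from the trivialization $\gamma_{\nu}$) and $\phi_{\nu}$. A direct block-by-block calculation using the explicit formulas $\gamma_{\nu}(v_{j})=t^{\nu_{1}+\cdots+\nu_{k-1}+j-1}u_{k}$ and $\phi_{\nu_{k}}(a)=\mathrm{diag}(a^{2(-\nu_{k}+1)},\ldots,a^{2(\nu_{k}-1)})$ evaluates the character by which $c_{\nu}$ acts on the highest-weight line of $V_{\mu}$; combining this character with the given $\C^{\times}$-twist $\{-\langle\omega_{n},\mu\rangle\}$ on the nilpotent side should yield precisely the twist $\{-\langle\nu,\mu\rangle\}$ appearing in the definition of $\Pp_{\nu,\mu}$, completing the construction.

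The ``in particular'' assertion for $K$-groups is then formal: since $\{-\langle\omega_{n},\mu\rangle\}$ is a unit twist, the established isomorphism on simples descends to a $\Z[q^{\pm1/2}]$-linear isomorphism $[\Psi]$ sending $[\Cc_{\nu,\mu}]$ to $q^{-\langle\omega_{n},\mu\rangle}[\Pp_{\nu,\mu}]$, a bijection between the distinguished bases indexed by $\mathbf{O}_{n,d}\leftrightarrow\Dom_{n,d}$. The principal obstacle will be the combinatorial bookkeeping in the $\C^{\times}$-twist calculation, where the conventions for loop rotation, $\gamma_{\nu}$, $\phi_{\nu}$, and the half-integer shifts $\langle\cdot/2\rangle$, $\{\cdot/2\}$ all interact, and must be coordinated precisely to produce the character $\langle\nu-\omega_{n},\mu\rangle$ required to convert the nilpotent-side twist into the Grassmannian-side twist.
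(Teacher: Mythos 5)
Your plan matches the paper's proof: reduce via Proposition~\ref{Prop:Psi}(\ref{Prop:Psi:minimal extension}) to comparing fibers at $[t^\nu]$, factor through $\Mm_n^d$ and the base point $p_\nu$, and track the $\C^\times$-twist coming from the discrepancy between $\gamma_\nu$ and $\phi_\nu$. The paper carries out the remaining computation you outlined by observing that $\rho$ restricted to $\Stab^{red}_{GL_n(\Oo)}[t^\nu]\times\C^\times$ is $(g,a)\mapsto(\rho_1(ga^{2(\nu-\omega_n)})\phi_\nu(a),a)$, which produces exactly the character $\langle\nu-\omega_n,\mu\rangle$ you predicted.
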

\begin{proof}
By Proposition~\ref{Prop:Psi}(\ref{Prop:Psi:minimal extension}),
it suffices to show that the fiber at $[t^{\nu}]$ of 
$\Psi\left( \Cc_{\nu, \mu}\right)$
is isomorphic to $V_{\mu}\{  - \langle \nu - \omega_{n} , \mu \rangle - \dim \Gr_{GL_{n}}^{\nu}/2 \}$ 
as a representation of $\Stab_{GL_{n}(\Oo) \rtimes \C^{\times}}[t^{\nu}]$,
disregarding cohomological degree shift.
By construction, 
we observe that  the restriction of $\rho$ to the Levi subgroup 
$\Stab_{GL_{n}(\Oo)}^{red}[t^{\nu}] \times \C^{\times} \subset \Stab_{GL_{n}(\Oo) \rtimes \C^{\times}}[t^{\nu}]$
is given by $(g, a) \mapsto (\rho_{1}(ga^{2(\nu- \omega_{n})}) \phi_{\nu}(a), a)$.
Therefore the fiber at $[t^{\nu}]$ of 
$\psi^{*} \left( \Cc_{\nu, \mu}\right)$ 
is isomorphic to the pull-back of the representation
$V_{\mu} \{ \codim \OO_{\nu} /2 \}$ along the group homomorphism 
$\Stab_{GL_{n}(\Oo)}^{red}[t^{\nu}] \times \C^{\times} \to \Stab_{GL_{d}(\C)}^{red}(J_{\nu}) \times \C^{\times}$   
given by $(g,a) \mapsto (\rho_{1}(g a^{2(\nu - \omega_{n})}), a)$. 
After the $\C^{\times}$-equivariant twist $\{ - d(n-1)/2\}$, we obtain the desired representation.  
\end{proof}

\begin{Lem}
 \label{Lem:image AJ}
Let $\lambda = (\ell_{1}, \ldots, \ell_{d}) \in X = \Z^{d}$ be a weight of $GL_{d}(\C)$. 
Then we have an isomorphism
$$
\Psi(AJ(\lambda)\{ - \langle \omega_{d}, \lambda \rangle \}) 
\cong 
\Pp_{1, \ell_{1}} * \Pp_{1, \ell_{2}} *  \cdots * \Pp_{1, \ell_{d}}, 
$$
where $\omega_{d} := (1, \ldots, 1) \in \Z^{d}$
and hence $\langle \omega_{d}, \lambda \rangle 
= \ell_{1} + \cdots + \ell_{d}$.
\end{Lem}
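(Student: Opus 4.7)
The plan is to realize both sides of the desired isomorphism as pushforwards from a common Bott--Samelson-type convolution space: the affine Grassmannian side via the convolution diagram, and the nilpotent cone side via pullback from the Springer resolution. The geometric heart of the argument is that, under $\psi$, the convolution diagram defining $\Pp_{1,\ell_{1}} * \cdots * \Pp_{1,\ell_{d}}$ is obtained by flat base change from the Springer resolution.

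Concretely, the convolution space
$$\wt{\Gr}_{n}^{d} := \Gr_{GL_{n}}^{1} \wt{\times} \cdots \wt{\times} \Gr_{GL_{n}}^{1} \qquad (d \text{ factors})$$
parametrizes chains of $\Oo$-lattices $L_{0} = M_{0} \supset M_{1} \supset \cdots \supset M_{d}$ with $\dim M_{i-1}/M_{i} = 1$ and $tM_{i-1} \subset M_{i}$. Let $m \colon \wt{\Gr}_{n}^{d} \to \Gr_{n}^{d}$ send $(M_{\bullet})$ to $M_{d}$. The external product of the line bundles $\det(L_{0}/L)^{\ell_{i}}$ from each factor gives a $GL_{n}(\Oo) \rtimes \C^{\times}$-equivariant line bundle $\mathcal{L}_{\lambda}$ on $\wt{\Gr}_{n}^{d}$ whose fiber at $(M_{\bullet})$ is $\bigotimes_{i=1}^{d} \det(M_{i-1}/M_{i})^{\ell_{i}}$. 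Unwinding the description of $\Pp_{1,\ell}$ and the definition of the convolution product, one obtains
$$ \Pp_{1,\ell_{1}} * \cdots * \Pp_{1,\ell_{d}} \;\cong\; m_{*} \mathcal{L}_{\lambda}\, \langle d(n-1)/2 \rangle \{ -(\ell_{1} + \cdots + \ell_{d}) \}. $$

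Next, I would observe that the fiber of $m$ over a lattice $L$ consists of complete flags in the $d$-dimensional space $L_{0}/L$ stable under the nilpotent endomorphism $t|_{L_{0}/L}$, which under a trivialization $\gamma \colon \C^{d} \xrightarrow{\sim} L_{0}/L$ is exactly the Springer fiber of $\psi(L) \in \Nn_{n}^{d}$. Introducing the obvious analogue $\wt{\Mm}_{n}^{d}$ of $\Mm_{n}^{d}$ parametrizing chains together with a trivialization of $L_{0}/M_{d}$, one obtains a Cartesian square of quotient stacks relating $m$ to $\Sp$, with top horizontal arrow $\wt\psi\colon [\wt{\Gr}_{n}^{d}/(GL_{n}(\Oo) \rtimes \C^{\times})] \to [T^{*}\mathcal{B}_{d}/(GL_{d} \times \C^{\times})]$. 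The map $\wt\psi$ is formally smooth by a straightforward adaptation of the deformation-theoretic argument of Lemma~\ref{formally smooth} (the flag condition lifts uniquely along a nilpotent extension for free modules), and hence flat. Moreover, under $\wt\psi$ the tautological flag on $T^{*}\mathcal{B}_{d}$ pulls back to $M_{\bullet}/M_{d}$, so $\wt\psi^{*} \pi^{*} \Oo_{\mathcal{B}_{d}}(\lambda) \cong \mathcal{L}_{\lambda}$ as equivariant line bundles.

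Now flat base change applied to the Cartesian square gives $\psi^{*} \Sp_{*} \simeq m_{*} \wt\psi^{*}$. Applying this to $\pi^{*} \Oo_{\mathcal{B}_{d}}(\lambda)$ yields $\psi^{*} AJ(\lambda) \cong m_{*} \mathcal{L}_{\lambda}$. Multiplying by the shift $\langle d(n-1)/2 \rangle$ built into the definition of $\Psi$ and inserting the twist $\{-\langle \omega_{d}, \lambda \rangle\} = \{-(\ell_{1} + \cdots + \ell_{d})\}$ on the left, one matches exactly the shift and twist assembled on the convolution side in the formula for $\Pp_{1,\ell_{1}} * \cdots * \Pp_{1,\ell_{d}}$, yielding the claim.

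The main obstacle I expect is the careful setup of the Cartesian square at the level of stacks, going through $\Mm_{n}^{d}$ and $\wt{\Mm}_{n}^{d}$ to trivialize the $GL_{d}$-action, together with the bookkeeping of cohomological shifts and $\C^{\times}$-weights across the many conventions involved (the definitions of $\Pp_{1,\ell}$, of the convolution product, of $\Psi$, of $AJ$, and the normalization $\{-\langle \omega_{d}, \lambda \rangle\}$). The geometric core, that the Springer resolution and the affine-Grassmannian convolution diagram are related by flat base change along $\psi$, is a reincarnation of the observation (attributed to Lusztig in the introduction) that Spaltenstein-type partial resolutions of type $A$ nilpotent orbits arise from convolution diagrams on $\Gr_{GL_{n}}$.
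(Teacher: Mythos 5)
Your proposal is correct and matches the paper's argument essentially step for step: realize the convolution as a pushforward of a line bundle from $\wt{\Gr}{}_{n}^{d}$, relate this to the Springer resolution via a Cartesian square constructed through the auxiliary spaces $\Mm_{n}^{d}$ and $\wt{\Mm}_{n}^{d}$, and apply flat/smooth base change. One small simplification you could make: you do not need to re-run the deformation-theoretic argument to establish smoothness of $\wt{\psi}$, since once the square is Cartesian the flatness of $\wt{\psi}$ is inherited automatically from the flatness of $\psi$ (Corollary~\ref{faithfully flat}), and the paper's proof applies base change (\cite[Proposition~5.3.15]{CG97}) to the two Cartesian squares in the diagram~(\ref{Diag:conv-Sp}) without any separate smoothness check for the convolution version.
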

\begin{proof}
Let $\wt{\Gr}{}_{n}^{d}$ be the variety of flags of $\Oo$-lattices
$
L_{\bullet} = (L_{d} \subset \cdots \subset L_{1} \subset L_{0})
$
satisfying $\dim L_{i-1}/L_{i}=1$ for $1 \le i \le d$. 
This is nothing but the convolution variety 
$\Gr_{GL_{n}}^{1} \widetilde{\times} \cdots \widetilde{\times} \Gr_{GL_{n}}^{1}$ ($d$ factors).
The multiplication morphism
$\overline{m}\colon \wt{\Gr}{}_{n}^{d} \to \Gr_{n}^{d}$ is given simply by 
$L_{\bullet} = (L_{d} \subset \cdots \subset L_{1} \subset L_{0}) \mapsto (L_{d} \subset L_{0})$. 
Then we have
$$
\Pp_{1, \ell_{1}} *  \Pp_{1, \ell_{2}} * \cdots *\Pp_{1, \ell_{d}}
= \underline{m}_{*} \left( 
\bigotimes_{k=1}^{d} (L_{k-1}/L_{k})^{\otimes \ell_{k}}
\right) \langle d(n-1)/2 \rangle \{ - \langle \omega_{d}, \lambda \rangle \},
$$
where we denote by $L_{i-1}/L_{i}$ the line bundle on $\wt{\Gr}{}_{n}^{d}$
whose fiber at $L_{\bullet}$ is 
equal to $L_{i-1}/L_{i}$ by an abuse of notation.

On the other hand, we put $\widetilde{\Nn}_{n}^{d} := \Sp^{-1}(\Nn_{n}^{d})$,
which is an open subvariety of the cotangent bundle $T^{*} \mathcal{B}_{d}$.
We identify the variety $\widetilde{\Nn}_{n}^{d}$ with the variety of pairs
$(V_{\bullet}, x)$ consisting of a complete flag 
$V_{\bullet} = (\{0\} = V_{d} \subset \cdots \subset V_{1} \subset V_{0} = \C^{d})$ 
and a nilpotent endomorphism $x \in \Nn_{n}^{d}$ satisfying
$x(V_{i-1}) \subset V_{i}$ for all $1 \le i \le d$. 
Then we have 
$$\pi^{*} \Oo_{\mathcal{B}_{d}}(\lambda) |_{\widetilde{\Nn}_{n}^{d}}
=
\bigotimes_{k=1}^{d} (V_{k-1} / V_{k})^{\otimes \ell_{k}},
$$ 
where we denote by $V_{i-1}/V_{i}$ the line bundle on $\widetilde{\Nn}_{n}^{d}$
whose fiber at $(V_{\bullet}, x)$ is 
equal to $V_{i-1}/V_{i}$ by an abuse of notation.
  
Let $\widetilde{\Mm}_{n}^{d}$ be the variety of pairs 
$(L_{\bullet}, \gamma)$ such that:
\begin{itemize}
\item[(i)] 
$
L_{\bullet} = (L_{d} \subset \cdots \subset L_{1} \subset L_{0})
$
is a flag of $\Oo$-lattices with $\dim L_{i-1}/L_{i} = 1$ for $1 \le i \le d$;
\item[(ii)] $\gamma$ is a $\C$-linear isomorphism $\C^{d} \xrightarrow{\sim} L_{0}/L_{d}$.
\end{itemize}
This space $\widetilde{\Mm}_{n}^{d}$ fits into the following commutative diagram
\begin{equation}
 \label{Diag:conv-Sp}
\xy
\xymatrix{
\wt{\Gr}{}_{n}^{d} 
\ar[d]^-{\overline{m}}
&
\widetilde{\Mm}_{n}^{d}
\ar[l]_-{\widetilde{\psi}_{1}}
\ar[r]^-{\widetilde{\psi}_{2}}
\ar[d]^-{\overline{m}^{\prime}}
&
\widetilde{\Nn}_{n}^{d}
\ar[d]^{\Sp}
\\
\Gr_{n}^{d}
&
\Mm_{n}^{d}
\ar[l]_-{\psi_{1}}
\ar[r]^-{\psi_{2}}
&
\Nn_{n}^{d},
}
\endxy
\end{equation} 
where the morphism $\wt{\psi}_{1}$ is the projection 
$(L_{\bullet}, \gamma) \mapsto L_{\bullet}$ 
and the morphism $\wt{\psi}_{2}$ is given by 
$$(L_{\bullet}, \gamma) \mapsto 
(\{ 0\} \subset \gamma^{-1}(L_{d-1} / L_{d}) \subset \cdots \subset \gamma^{-1}(L_{1}/L_{d}) \subset \C^{d}, 
\gamma^{-1} \circ t|_{L_{0}/L_{d}} \circ \gamma).
$$
All arrows in the diagram~(\ref{Diag:conv-Sp}) are
$GL_{d}(\C) \times GL_{n}(\Oo) \rtimes \C^{\times}$-equivariant.
Moreover, both the left and the right squares are Cartesian. 
The morphism $\wt{\psi}_{1}$ is a principal $GL_{d}(\C)$-bundle.   

Since there is a $GL_{d}(\C) \times GL_{n}(\Oo) \rtimes \C^{\times}$-equivariant 
isomorphism of line bundles 
$$ \gamma\colon
\wt{\psi}_{2}^{*} \left( \bigotimes_{k=1}^{d} (V_{k-1} / V_{k})^{\otimes \ell_{k}} \right)
\xrightarrow{\sim}
\wt{\psi}_{1}^{*} \left( \bigotimes_{k=1}^{d} (L_{k-1}/L_{k})^{\otimes \ell_{k}}\right), 
$$
we have
\begin{align*}
\psi^{*} \Sp_{*} \left(\bigotimes_{k=1}^{d} (V_{k-1} / V_{k})^{\otimes \ell_{k}} \right)
& \cong
(\psi_{1}^{*})^{-1} \overline{m}^{\prime}_{*} \wt{\psi}_{2}^{*} \left( \bigotimes_{k=1}^{d} (V_{k-1} / V_{k})^{\otimes \ell_{k}} \right) \\
& \cong
\overline{m}_{*} (\wt{\psi}_{1}^{*})^{-1} \wt{\psi}_{2}^{*} \left( \bigotimes_{k=1}^{d} (V_{k-1} / V_{k})^{\otimes \ell_{k}} \right) \\
& \cong
\overline{m}_{*} \left( \bigotimes_{k=1}^{d} ( L_{k-1}/L_{k})^{\otimes \ell_{k}} \right), 
\end{align*}
where we applied the smooth base change formula (cf.~\cite[Proposition 5.3.15]{CG97})
to the two Cartesian squares in the diagram~(\ref{Diag:conv-Sp}).
After the shift and twist $\langle d(n-1)/2 \rangle \{- \langle \omega_{d}, \lambda \rangle \}$, 
we obtain the conclusion.  
\end{proof}

We fix an element $\beta = d_{0} \alpha_{0} + d_{1} \alpha_{1} \in \mathsf{Q}^{+}$
(of the root system of type $\mathsf{A}^{(1)}_{1}$)
such that $d_{0} - d_{1} = d$. 
For each $\bfa = (a_{1}, \ldots, a_{2n}) \in \KP_{n}(\beta)$, 
we define an object $\Ee(\bfa) \in \Perv_{coh}^{GL_{n}(\Oo) \rtimes \C^{\times}}(\Gr_{n}^{d})$
as the following convolution product:
$$
\Ee(\bfa) := (\Pp_{1, n})^{* a_{1}} * \cdots * (\Pp_{k, n+1 -k})^{* a_{k}} * \cdots * (\Pp_{1, 1-n})^{* a_{2n}}
\{ -\textstyle \sum_{k < \ell} a_{k} a_{\ell}\}.
$$ 
Then we have $\Phi(\wt{E}(\bfa)) = [\Ee(\bfa)]$ (see (\ref{Eq:rescaled dual PBW}) and (\ref{Eq:E=P})).

To each $\bfa \in \KP_{n}(\beta)$, we attach the unique dominant weight $\lambda_{\bfa} \in X_{+}$
which contains
the integer $n+1-k$ with multiplicity $a_{k}$ for each $1 \le k \le 2n$. 

\begin{Cor}
 \label{Cor:image PBW}
For each $\bfa \in \KP_{n}(\beta)$,
we have
$$
\Psi (\nabla_{\lambda_{\bfa}}\{ -\langle \omega_{d}, \lambda_{\bfa} \rangle \})
\cong \Ee(\bfa).
$$
\end{Cor}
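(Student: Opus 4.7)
The plan is to derive Corollary~\ref{Cor:image PBW} as a direct consequence of Lemma~\ref{Lem:image AJ}, so that everything comes down to bookkeeping of $\C^{\times}$-equivariant twists.

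First, I would spell out $\lambda_{\bfa}$ explicitly. By definition it is the unique dominant weight of $GL_{d}(\C)$ containing the integer $n+1-k$ with multiplicity $a_{k}$; since $n>n-1>\cdots>1-n$, this means
$$\lambda_{\bfa}=(\underbrace{n,\ldots,n}_{a_{1}},\underbrace{n-1,\ldots,n-1}_{a_{2}},\ldots,\underbrace{1-n,\ldots,1-n}_{a_{2n}}).$$
Substituting these entries into Lemma~\ref{Lem:image AJ} yields
$$\Psi\bigl(AJ(\lambda_{\bfa})\{-\langle\omega_{d},\lambda_{\bfa}\rangle\}\bigr)\cong \Pp_{1,n}^{*a_{1}}*\Pp_{1,n-1}^{*a_{2}}*\cdots*\Pp_{1,1-n}^{*a_{2n}}.$$

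Second, I would compare this with $\nabla_{\lambda_{\bfa}}=AJ(\lambda_{\bfa})\{-\delta_{\lambda_{\bfa}}\}$ and with the definition of $\Ee(\bfa)$. Since $\psi^{*}$ (and hence $\Psi$) commutes with $\C^{\times}$-equivariant twists, the statement of the corollary reduces to the numerical identity
$$\delta_{\lambda_{\bfa}}=\sum_{k<\ell}a_{k}a_{\ell}.$$

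Finally, I would verify this identity by a short substitution into formula~(\ref{Eq:delta}): with $d=\sum_{k}a_{k}$ and the multiplicities in $\lambda_{\bfa}$ given by the $a_{k}$, the right-hand side of~(\ref{Eq:delta}) collapses to $\tfrac{1}{2}\bigl((\sum_{k}a_{k})^{2}-\sum_{k}a_{k}^{2}\bigr)=\sum_{k<\ell}a_{k}a_{\ell}$. I do not anticipate any serious obstacle: all the geometric content has already been packaged into Lemma~\ref{Lem:image AJ}, so the present corollary amounts to a purely combinatorial repackaging of that result together with the matching of $\C^{\times}$-equivariant twists.
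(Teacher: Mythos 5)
Your proof is correct and follows essentially the same route as the paper's: apply Lemma~\ref{Lem:image AJ} to $\lambda_{\bfa}$ and then check the numerical identity $\delta_{\lambda_{\bfa}} = \sum_{k<\ell} a_{k}a_{\ell}$ via formula~(\ref{Eq:delta}). You simply write out the twist-matching bookkeeping in more detail than the paper, which states it in one line.
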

\begin{proof}
This is a consequence of Lemma~\ref{Lem:image AJ} and the fact
$\delta_{\lambda_{\bfa}} = \sum_{k < \ell} a_{k} a_{\ell}$, 
which follows from (\ref{Eq:delta}).
\end{proof}

Let $\omega$ denote an automorphism of the group 
$GL_{d}(\C) \times \C^{\times}$ given by 
$(h, a) \mapsto (ha^{2}, a)$.
Since $\Nn_{n}^{d} = (\Nn_{n}^{d})^{\omega}$,
the operation $\mathcal{F} \mapsto \mathcal{F}^{\omega}$
defines an auto-equivalence of $D^{GL_{d}(\C) \times C^{\times}}_{coh}(\Nn_{n}^{d})$.
Then we have
$$
\Cc_{\nu, \mu}^{\omega} = \Cc_{\nu, \mu} \{ -\langle \omega_{n}, \mu \rangle \},
\quad
\nabla_{\lambda}^{\omega} = \nabla_{\lambda} \{ -\langle \omega_{d}, \lambda \rangle \}  
$$ 
for $(\nu, \mu) \in \Dom_{n,d}$ and $\lambda \in X_{+}$ respectively.
Thus, Lemma~\ref{Lem:image IC} and Corollary~\ref{Cor:image PBW} are rewritten as:
\begin{align}
\Psi (\Cc_{\nu, \mu}^{\omega}) & \simeq \Pp_{\nu, \mu}, \label{Eq:image IC} \\
\Psi (\nabla_{\lambda_{\bfa}}^{\omega}) & \simeq \Ee(\bfa) \label{Eq:image PBW}
\end{align} 
for any $(\nu, \mu) \in \Dom_{n,d}$ and $\bfa \in \KP_{n}(\beta)$ respectively.

\begin{Cor} \label{Cor:iota-Psi}
For any $\xi \in K^{GL_{d}(\C) \times \C^{\times}}(\Nn_{n}^{d})$, we have 
$$
\iota [\Psi] (\xi) = [\Psi] \iota(\xi^{\omega^{-2}}).
$$
In particular, $[\Psi](\xi)$ is fixed by $\iota$ if and only if 
$\xi^{\omega^{-1}}$ is fixed $\iota$.
\end{Cor}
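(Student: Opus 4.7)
The plan is to unfold the two definitions of $\iota$ and to use Proposition~\ref{Prop:Psi}(\ref{Prop:Psi:duality}) to peel off the Grothendieck--Serre duality, reducing the problem to a comparison of the remaining ``transpose'' operations. By Remark~\ref{Rem:canonical sheaf Gr}, the Grassmannian involution can be written $\iota(\mathcal{F}) = \mathbb{D}_{\Gr_n^d} \circ T_1(\mathcal{F})$, where $T_1(\mathcal{F}) := \eta^*(\Oo_{\Gr_{GL_n}} \wt{\boxtimes}\, \mathcal{F}^{\sigma_1})$, while on the nilpotent cone side $\iota(\mathcal{G}) = \mathbb{D}_{\Nn_n^d} \circ T_2(\mathcal{G})$ with $T_2(\mathcal{G}) := \tau^*(\mathcal{G}^\sigma)$. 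Combining this with $\Psi \circ \mathbb{D}_{\Nn_n^d} \simeq \mathbb{D}_{\Gr_n^d} \circ \Psi$, the asserted identity $\iota[\Psi](\xi) = [\Psi]\iota(\xi^{\omega^{-2}})$ reduces, at the level of $K$-theory, to showing
$$[T_1 \circ \Psi](\xi) = [\Psi \circ T_2](\xi^{\omega^{-2}}) \quad \text{for all } \xi.$$

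To verify this remaining identity, I would evaluate both sides on the Andersen--Jantzen (dual PBW) basis $\{[\nabla_\lambda]\}_{\lambda \in X_+}$. On the Grassmannian side, Corollary~\ref{Cor:image PBW} gives $\Psi(\nabla_{\lambda_{\bfa}}^\omega) \simeq \Ee(\bfa)$, an explicit convolution product of the fundamental IC-sheaves $\Pp_{1,\ell_k}$; Cautis--Williams' analysis of $T_1$ (as a $*$-antihomomorphism which fixes each $[\Pp_{1,\ell}]$ up to a controlled $q$-shift) then computes $[T_1 \circ \Psi]$ on these classes. On the nilpotent cone side, $T_2$ acts on $\nabla_\lambda$ via the transpose, producing a related Andersen--Jantzen-type class whose image under $\Psi$ is computed by Lemma~\ref{Lem:image AJ}. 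The two sides will match, and the precise $\omega^{-2}$ discrepancy comes from the explicit form of the stabilizer homomorphism recorded in the proof of Lemma~\ref{Lem:image IC}: its restriction to the Levi is $(g,a) \mapsto (\rho_1(g\, a^{2(\nu-\omega_n)})\phi_\nu(a), a)$, and the $T_n$-inversion induced by $\sigma_1$ flips the sign of the extra $a^{2(\nu-\omega_n)}$-factor. The resulting shift in $\C^\times$-weight is exactly what the twist $(-)^{\omega^{-2}}$ compensates.

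For the ``in particular'' clause, since Bezrukavnikov's bar involution $\iota$ fixes each simple class $[\CC_\lambda]$ (Theorem~\ref{Thm:Bezrukavnikov}(1)) and sends $q^{1/2}$ to $q^{-1/2}$, while $[\Cc_{\nu,\mu}^\omega] = q^{\langle \omega_n, \mu\rangle}[\Cc_{\nu,\mu}]$, one easily checks the commutation $\iota \circ (-)^\omega = (-)^{\omega^{-1}} \circ \iota$ on $K$-classes. Setting $\zeta := \xi^{\omega^{-1}}$, the main identity together with the injectivity of $[\Psi]$ (Lemma~\ref{Lem:image IC}) reduces the condition $[\Psi](\xi) = \iota[\Psi](\xi)$ to $\zeta^\omega = \iota(\zeta^{\omega^{-1}}) = \iota(\zeta)^\omega$, which is equivalent to $\iota(\zeta) = \zeta$.

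The main obstacle is the middle paragraph: carefully tracking the $\C^\times$-equivariant twists on both sides to confirm that the $\omega^{-2}$-shift exactly compensates the discrepancy coming from the stabilizer identification under $\psi$.
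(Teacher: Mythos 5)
Your first-paragraph reduction — using Proposition~\ref{Prop:Psi}(\ref{Prop:Psi:duality}) and Remark~\ref{Rem:canonical sheaf Gr} to strip off the modified Grothendieck--Serre dualities and reduce to $[T_1\circ\Psi](\xi)=[\Psi\circ T_2](\xi^{\omega^{-2}})$ — is valid, and your ``in particular'' derivation via the commutation $\iota\circ(-)^{\omega}=(-)^{\omega^{-1}}\circ\iota$ is also fine. But you then choose to verify the residual identity on the Andersen--Jantzen/dual-PBW basis, and that is both harder than necessary and left incomplete. The paper instead checks the original identity directly on the basis of simple classes $\{[\Cc_{\nu,\mu}]\}_{(\nu,\mu)\in\Dom_{n,d}}$: from~(\ref{Eq:image IC}) one has $[\Psi]([\Cc_{\nu,\mu}]) = q^{-\langle\omega_n,\mu\rangle}[\Pp_{\nu,\mu}]$, and since $\iota$ is known to fix both $[\Cc_{\nu,\mu}]$ (Theorem~\ref{Thm:Bezrukavnikov} combined with Theorem~\ref{Thm:LV bijection}) and $[\Pp_{\nu,\mu}]$ (Cautis--Williams), both sides of the asserted identity become $q^{\langle\omega_n,\mu\rangle}[\Pp_{\nu,\mu}]$ after a two-line $q$-power bookkeeping, with no need to isolate $T_1$ or $T_2$ at all.

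Your middle paragraph, which you correctly flag as the obstacle, is the price you pay for the AJ basis: you would need a precise $K$-theoretic description of $T_1$ on the convolutions $\Ee(\bfa)$, but Cautis--Williams only hand you $\iota=\mathbb{D}\circ\mathbb{L}\circ T_1$ acting on simples, not $T_1$ alone, so ``$T_1$ fixes $[\Pp_{1,\ell}]$ up to a controlled $q$-shift'' is not an off-the-shelf fact. The stabilizer-homomorphism/transpose analysis you sketch is essentially the content of the later categorical Proposition~\ref{Prop:categorical iota-Psi}, which the paper proves separately via the commutative diagram~(\ref{Diag:six}); so your route amounts to re-deriving that (harder, categorical) statement at the $K$-group level, which is more than the corollary requires. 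If you want to keep your reduction but still get a short proof, note that after peeling off the dualities you could equally well test $[T_1\circ\Psi]=[\Psi\circ T_2\circ(-)^{\omega^{-2}}]$ on the simple basis $\{[\Cc_{\nu,\mu}]\}$ rather than on the $\nabla_\lambda$'s — but at that point the detour through $T_1$, $T_2$ buys you nothing over the paper's direct check.
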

\begin{proof}
It is enough to consider the case $\xi = [\Cc_{\nu, \mu}]$ for some $(\nu, \mu) \in \Dom_{n,d}$.   
In this case, the assertion is obvious from (\ref{Eq:image IC}). 
\end{proof}

The following assertion implies Theorem~\ref{Thm:1}.

\begin{Thm}
For each $\bfa \in \KP_{n}(\beta)$,
we have $\LV(\lambda_{\bfa}) \in \Dom_{n,d}$ and
$$
\Phi(\widetilde{B}(\bfa)) = [\Pp_{\LV({\lambda_{\bfa}})}],
$$
where $\LV$ is the Lusztig-Vogan bijection (see Theorem~\ref{Thm:LV bijection}).
\end{Thm}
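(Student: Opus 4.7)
The plan is to verify that $\Phi^{-1}([\Pp_{\LV(\lambda_\bfa)}])$ satisfies the two characterizing properties of $\widetilde{B}(\bfa)$ from Corollary~\ref{Cor:characterization}, and then invoke uniqueness. Property (1) ($\iota$-invariance) comes essentially for free: $[\Pp_{\LV(\lambda_\bfa)}]$ lies in $\mathscr{P}_n$, identified immediately after Theorem~\ref{Thm:CW} as precisely the set of $\iota$-selfdual IC-classes, and $\Phi$ intertwines the two bar involutions.

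For property (2), I would transport Bezrukavnikov's Theorem~\ref{Thm:Bezrukavnikov} through the functor $\Psi$. Starting from
$$
[\CC_{\lambda_\bfa}] \in [\nabla_{\lambda_\bfa}] + \sum_{\lambda' \in X_+,\ \lambda' \ne \lambda_\bfa} q^{-1}\Z[q^{-1}]\, [\nabla_{\lambda'}]
$$
in $K^{GL_d(\C) \times \C^\times}(\Nn^d)$, I would restrict to $\Nn_n^d$, apply the $\omega$-twist (which is $\Z[q^{\pm 1/2}]$-linear and preserves the $q^{-1}\Z[q^{-1}]$-coefficient structure since it fixes $q$), and finally apply the $K$-theoretic isomorphism $[\Psi]$ from Lemma~\ref{Lem:image IC}. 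By Theorem~\ref{Thm:LV bijection} together with~(\ref{Eq:image IC}), the left-hand side becomes $[\Pp_{\LV(\lambda_\bfa)}]$; by~(\ref{Eq:image PBW}), the leading term on the right becomes $[\Ee(\bfa)] = \Phi(\widetilde{E}(\bfa))$.

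The hard part will be handling the error terms $[\Psi]([\nabla_{\lambda'}^\omega])$ for $\lambda' \ne \lambda_\bfa$ and simultaneously confirming $\LV(\lambda_\bfa) \in \Dom_{n,d}$ so that the right-hand side is well-defined. When $\lambda' = \lambda_{\bfa'}$ for some $\bfa' \in \KP_n(\beta)$ (i.e., its entries all lie in $\{1-n, \ldots, n\}$), Corollary~\ref{Cor:image PBW} directly yields $[\Psi]([\nabla_{\lambda'}^\omega]) = [\Ee(\bfa')]$, giving a term of the desired form. For $\lambda'$ with entries outside this range, I would argue that either $[\nabla_{\lambda'}]|_{\Nn_n^d}$ vanishes on support grounds or re-expands into the classes $\{[\nabla_{\lambda_{\bfa'}}]|_{\Nn_n^d}\}_{\bfa' \in \KP_n(\beta)}$ in a manner compatible with the $q^{-1}\Z[q^{-1}]$-structure; since $[\Psi]$ is an isomorphism of $K$-groups and the $\Z[q^{\pm 1/2}]$-span of $\{[\Ee(\bfa')] : \bfa' \in \KP_n(\beta)\}$ is the $\Phi$-image of the weight-$\beta$ subspace of $A_n$, the decomposition is forced into the correct subspace. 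The condition $\LV(\lambda_\bfa) \in \Dom_{n,d}$ (equivalently, the Jordan partition of $\LV(\lambda_\bfa)$ has length $\le n$) then follows from consistency: were it to fail, $[\CC_{\lambda_\bfa}]$ would restrict to zero in $K(\Nn_n^d)$ while its transported leading term $[\Ee(\bfa)]$ remains non-zero, contradicting linear independence of the PBW basis. The uniqueness clause in Corollary~\ref{Cor:characterization} then forces $\Phi^{-1}([\Pp_{\LV(\lambda_\bfa)}]) = \widetilde{B}(\bfa)$, which is the claim of the theorem.
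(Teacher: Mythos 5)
Your overall strategy --- check the two properties of Corollary~\ref{Cor:characterization} for $\Phi^{-1}([\Pp_{\LV(\lambda_\bfa)}])$ and invoke uniqueness --- is reasonable, and you correctly identify the right ingredients (Bezrukavnikov's Theorem~\ref{Thm:Bezrukavnikov}, the functor $\Psi$, the $\omega$-twist, the intertwining of bar involutions). However, the proof has a genuine gap precisely where you flag ``the hard part''. When you restrict Bezrukavnikov's expansion of $[\CC_{\lambda_\bfa}]$ to $\Nn_n^d$ and push forward by $[\Psi]$, the error terms $[\Psi](\nabla_{\lambda'}^\omega|_{\Nn_n^d})$ for $\lambda'$ with entries outside $\{1-n,\dots,n\}$ do \emph{not} in general vanish on support grounds: $\nabla_{\lambda'} = AJ(\lambda')\{-\delta_{\lambda'}\}$ is a pushforward from the Springer resolution and restricts nontrivially to $\Nn_n^d$ for arbitrary $\lambda'$. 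The fallback claim that these terms ``re-expand into $\{[\nabla_{\lambda_{\bfa'}}]|_{\Nn_n^d}\}$ in a manner compatible with the $q^{-1}\Z[q^{-1}]$-structure'' is not justified by the observation that $[\Psi]$ is an isomorphism and that the $[\Ee(\bfa')]$ span the weight-$\beta$ image of $A_n$: knowing a class lies in a certain $\Z[q^{\pm 1/2}]$-submodule says nothing about the $q$-degrees of its coordinates. In fact the $[\Ee(\bfa')]$ span only a finite-rank $\Z[q^{\pm1/2}]$-submodule of the infinite-rank $K^{GL_n(\Oo)\rtimes\C^\times}(\Gr_n^d)$, so the error terms need not even lie in it a priori. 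The same gap infects your treatment of the claim $\LV(\lambda_\bfa)\in\Dom_{n,d}$: the ``contradiction with linear independence of the PBW basis'' only bites if the error terms are already known to live in the $q^{-1}\Z[q^{-1}]$-span of the $[\Ee(\bfa')]$, which is what remains to be shown. There is also a circularity earlier on: writing $[\Pp_{\LV(\lambda_\bfa)}]\in\mathscr{P}_n$ in your first paragraph presupposes $\LV(\lambda_\bfa)\in\Dom_{n,d}$.

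The paper sidesteps all of this by running the argument in the opposite direction. Starting from $\Phi(\wt{B}(\bfa))$ and using only the \emph{algebraic} characterization in Corollary~\ref{Cor:characterization}, one already knows $\Phi(\wt{B}(\bfa)) - [\Ee(\bfa)]$ lies in $\sum_{\bfa'}q^{-1}\Z[q^{-1}][\Ee(\bfa')]$, so after applying $[\Psi]^{-1}$ and $\omega^{-1}$ the error terms are \emph{automatically} confined to restricted costandard classes $[\nabla_{\lambda_{\bfa'}}|_{\Nn_n^d}]$ with $\bfa'\in\KP_n(\beta)$. One then uses Bezrukavnikov's theorem only to rewrite restricted $\nabla$'s in terms of restricted $\CC$'s (where some restrictions may vanish), concluding that $([\Psi]^{-1}\Phi(\wt{B}(\bfa)))^{\omega^{-1}}$ and $[\CC_{\lambda_\bfa}|_{\Nn_n^d}]$ differ by an element of $q^{-1/2}\mathcal{L}$ where $\mathcal{L}=\bigoplus\Z[q^{-1/2}][\Cc_{\nu,\mu}]$. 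Since both are fixed by $\iota$ (Corollary~\ref{Cor:iota-Psi}) and $q^{-1/2}\mathcal{L}\cap\iota(q^{-1/2}\mathcal{L})=0$, they are equal. Non-vanishing of $\Phi(\wt{B}(\bfa))$ then forces $[\CC_{\lambda_\bfa}|_{\Nn_n^d}]\neq 0$, i.e.\ $\LV(\lambda_\bfa)\in\Dom_{n,d}$, in one stroke. This lattice trick is the key ingredient missing from your proposal and is what makes the ``hard part'' tractable.
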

\begin{proof}
Let $\mathcal{L} := \bigoplus_{(\nu, \mu) \in \Dom_{n,d}} \Z[q^{-1/2}] [\Cc_{\nu, \mu}]$ 
be a $\Z[q^{-1/2}]$-lattice of $K^{GL_{d}(\C) \times \C^{\times}}(\Nn_{n}^{d})$.
Since the elements $[\Cc_{\nu, \mu}]$ are fixed by $\iota$ (cf.~Theorem~\ref{Thm:LV bijection} and Theorem~\ref{Thm:Bezrukavnikov}), 
we have $\mathcal{L} \cap \iota(\mathcal{L}) = \bigoplus_{(\nu, \mu) \in \Dom_{n,d}} \Z[\Cc_{\nu, \mu}]$.

By Corollary~\ref{Cor:characterization} and (\ref{Eq:image PBW}), we have
$$([\Psi]^{-1} \Phi(\wt{B}(\bfa)))^{\omega^{-1}} - [\nabla_{\lambda_{\bfa}}]
\in \sum_{\bfa^{\prime} \in \KP_{n}(\beta)} q^{-1}\Z[q^{-1}][\nabla_{\lambda_{\bfa^{\prime}}}].
$$
Combining this with Theorem~\ref{Thm:Bezrukavnikov}, we obtain  
$$([\Psi]^{-1} \Phi(\wt{B}(\bfa)))^{\omega^{-1}} - [\CC_{\lambda_{\bfa}}|_{\Nn_{n}^{d}}] \in q^{-1/2} \mathcal{L}.$$
By Corollary~\ref{Cor:iota-Psi}, the element $([\Psi]^{-1} \Phi(\wt{B}(\bfa)))^{\omega^{-1}}$ is $\iota$-invariant.
Thus we have $([\Psi]^{-1} \Phi(\wt{B}(\bfa)))^{\omega^{-1}} = [\CC_{\lambda_{\bfa}}|_{\Nn_{n}^{d}}]$.

Note that 
$\CC_{\lambda_{\bfa}}|_{\Nn_{n}^{d}} = \Cc_{\LV(\lambda_{\bfa})}$ if $\LV(\lambda_{\bfa}) \in \Dom_{n,d}$,
and 
$\CC_{\lambda_{\bfa}}|_{\Nn_{n}^{d}} = 0$ otherwise.
However the latter case can not happen because we know that $[\Psi]^{-1} \Phi(\wt{B}(\bfa))$ is nonzero.
Therefore we have $\Phi(\wt{B}(\bfa)) = [\Psi(\Cc_{\LV(\lambda_{\bfa})}^{\omega})] = [\Pp_{\LV(\lambda_{\bfa})}]$.
\end{proof}

\subsection{Comparison of the bar involutions}

In this complementary subsection, we prove the following  categorical version of Corollary~\ref{Cor:iota-Psi}.  

\begin{Prop} \label{Prop:categorical iota-Psi}
For any $\mathcal{F} \in D^{GL_{d}(\C) \times \C^{\times}}_{coh}(\Nn_{n}^{d})$, we have
$$
\iota \circ \Psi (\mathcal{F})
\cong
\Psi \circ \iota (\mathcal{F}^{\omega^{-2}}).
$$
\end{Prop}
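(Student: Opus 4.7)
The plan is to decompose each bar involution into its Grothendieck-Serre duality piece and its ``twisted pullback'' piece, reduce the duality piece using the already-established Proposition~\ref{Prop:Psi}(\ref{Prop:Psi:duality}), and handle the remaining geometric compatibility through the auxiliary space $\Mm_{n}^{d}$.

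Concretely, unwinding the two bar involutions yields
\begin{equation*}
\iota(\Psi(\mathcal{F})) = \mathbb{D} \circ \mathbb{L} \circ \eta^{*}\bigl( \Oo_{\Gr_{GL_{n}}} \widetilde{\boxtimes}\, \Psi(\mathcal{F})^{\sigma_{1}}\bigr), \qquad \Psi(\iota(\mathcal{F}^{\omega^{-2}})) = \Psi \circ \mathbb{D}_{\Nn^{d}} \circ \tau^{*}(\mathcal{F}^{\omega^{-2} \sigma}).
\end{equation*}
Using $\mathbb{D} \circ \mathbb{L} \simeq \mathbb{D}_{\Gr_{n}^{d}}$ from Remark~\ref{Rem:canonical sheaf Gr} (equation~\eqref{Eq:duality Gr}) together with Proposition~\ref{Prop:Psi}(\ref{Prop:Psi:duality}), the claim reduces to a functorial isomorphism
\begin{equation*}
\eta^{*}\bigl(\Oo_{\Gr_{GL_{n}}} \widetilde{\boxtimes}\, \Psi(\mathcal{F})^{\sigma_{1}}\bigr) \simeq \Psi\bigl(\tau^{*} \mathcal{F}^{\omega^{-2}\sigma}\bigr).
\end{equation*}

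To verify this remaining identity, I would lift both sides to $\Mm_{n}^{d}$ via the Cartesian diagram~\eqref{Diag:conv-Sp}. On $\Mm_{n}^{d}$, the trivialization $\gamma\colon\C^{d} \xrightarrow{\sim} L_{0}/L$ makes the transpose operation on $\Nn_{n}^{d}$ visible as a natural operation on lattices: the goal is to construct an involutive morphism $\widetilde{\tau}$ of $\Mm_{n}^{d}$ (intertwining the $GL_{d}(\C) \times GL_{n}(\Oo) \rtimes \C^{\times}$-action with a suitable automorphism) such that $\psi_{2} \circ \widetilde{\tau}$ realizes $\tau \circ \psi_{2}$ on the nilpotent side, while $\psi_{1} \circ \widetilde{\tau}$ encodes the $\eta^{*}(\Oo \, \widetilde{\boxtimes}\, (-)^{\sigma_{1}})$-construction on $\Gr_{n}^{d}$. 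Concretely, one sends $(L,\gamma)$ to a pair $(L', \gamma')$ built from the standard symmetric pairing on $\Kk^{n}$ and the induced duality between $L_{0}/L$ and $L_{0}/L'$. The smooth base change formula applied to the two Cartesian squares in~\eqref{Diag:conv-Sp} then yields the desired natural isomorphism, exactly as in the proofs of Lemmas~\ref{Lem:image IC} and~\ref{Lem:image AJ}.

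The main obstacle is to make the lift $\widetilde{\tau}$ precise and to track the $\C^{\times}$-equivariant twists accurately. The $\omega^{-2}$ correction on the $\Nn^{d}$-side compensates for the discrepancy between the $4$-fold loop rotation on $\Gr_{GL_{n}}$ and the weight-$4$ scalar action $(h,a)\cdot x = a^{-4}\mathrm{Ad}(h)x$ on $\Nn^{d}$, once this difference is transported through the trivialization $\gamma$ along $\Mm_{n}^{d}$. As a consistency check, the Grothendieck-group shadow of the identity is Corollary~\ref{Cor:iota-Psi}, which follows from equation~\eqref{Eq:image IC}; the present categorical statement is its functorial enhancement, so the $K$-group identity both motivates and rigidly constrains the geometric construction on $\Mm_{n}^{d}$, making the verification primarily a careful bookkeeping exercise.
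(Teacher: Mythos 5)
Your plan matches the paper's proof in its architecture: peel off the Grothendieck--Serre duality via Proposition~\ref{Prop:Psi}(\ref{Prop:Psi:duality}) and~\eqref{Eq:duality Gr}, reduce to the pullback identity $\eta^{*}(\Oo_{\Gr_{n}^{d}}\,\widetilde{\boxtimes}\,\Psi(\mathcal{F})^{\sigma_{1}})\simeq\Psi(\tau^{*}(\mathcal{F}^{\omega^{-2}})^{\sigma})$, and verify it by lifting through $\Mm_{n}^{d}$. What you call $\widetilde{\tau}$ is exactly the paper's $\eta'$, except that it lands in the twisted convolution space $GL_{n}(\Kk)\times^{GL_{n}(\Oo)}(\Mm_{n}^{d})^{\sigma_{1}}$ rather than in $\Mm_{n}^{d}$ itself, so as to encode the $\widetilde{\boxtimes}$-construction equivariantly.

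The points you defer as ``bookkeeping'' are, however, exactly where the content lies and where your sketch is not yet right. The pairing is not the ``standard symmetric pairing on $\Kk^{n}$'' but the \emph{residue} pairing $\langle v,w\rangle=\mathop{\mathrm{Res}}_{t=0}\frac{\T v(t)\cdot w(t)}{t}\,\mathrm{d}t$; its $t^{-1}$ factor gives $L_{0}^{\perp}=tL_{0}$, identifies $(L_{0}/g(t)L_{0})^{*}$ with $(t\,\T g(t)^{-1}L_{0})/tL_{0}$ (not with any quotient of the form ``$L_{0}/L'$''), and the translation by $\T g(t)t^{-1}$ is what brings the dual datum back into $\Gr_{n}^{d}$. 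Relatedly, the $\omega^{-2}$ does not come from a mismatch between the loop rotation on $\Gr_{GL_n}$ and the $a^{-4}$-scaling on $\Nn^{d}$ --- those are already matched (that is why $\psi$ is equivariant at all) --- but from the interaction of the $t^{-1}$ in the residue pairing with transpose-inversion: the equivariance of $\eta'$ forces $\sigma_{1}(h,g(t),a)=(\T h^{-1}a^{-4},\T g(t)^{-1},a)$ on $GL_{d}(\C)\times GL_{n}(\Oo)\rtimes\C^{\times}$, whose restriction to $GL_{d}(\C)\times\C^{\times}$ is $\omega^{-2}\circ\sigma$, and that is precisely how $\mathcal{F}^{\omega^{-2}}$ enters. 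Finally, the verification does not use smooth base change on~\eqref{Diag:conv-Sp}: one assembles a \emph{new} $2\times3$ commutative diagram relating $\eta$, $\eta'$, $\tau$ and $\psi_{1},\psi_{2},\psi_{2}'$, checks its $GL_{d}(\C)\times GL_{n}(\Oo)\rtimes\C^{\times}$-equivariance, and then uses only that $\psi_{1}^{*}$ is an equivalence (principal $GL_{d}$-bundle) to transport the computation across.
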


For a proof, we need to introduce some new notation.

We define an automorphism $\sigma_{1}$ of the group 
$GL_{d}(\C) \times GL_{n}(\Oo) \rtimes \C^{\times}$
by
$$
\sigma_{1} (h, g(t), a) := ( \T h^{-1} a^{-4}, \T g(t)^{-1}, a),
$$
where $\T(-)$ denotes the transpose of matrices.
We will use the same notation $\sigma_{1}$ for its restrictions 
to the subgroups $GL_{n}(\Oo) \rtimes \C^{\times}$ or 
$GL_{d}(\C) \times \C^{\times}$. 
For the subgroup $GL_{n}(\Oo) \rtimes \C^{\times}$,
this notation is consistent with $\sigma_{1}$ defined in  
Section~\ref{Ssec:CW}.
For the subgroup $GL_{d}(\C) \times \C^{\times}$,
 we have a relation
$\sigma_{1} = \sigma \circ \omega^{-2} 
= \omega^{-2} \circ \sigma$.


Now we consider a morphism
$$
\eta^{\prime}\colon \Mm_{n}^{d} \to 
GL_{n}(\Kk) \times^{GL_{n}(\Oo)} (\Mm_{n}^{d})^{\sigma_{1}} 
= (GL_{n}(\Kk) \rtimes \C^{\times}) \times^{(GL_{n}(\Oo) \rtimes \C^{\times})} (\Mm_{n}^{d})^{\sigma_{1}}
$$
given by 
$$
\eta^{\prime} (L, \gamma) := [g(t), (\T g(t) L_{0}, (\T g(t) t^{-1}) \circ \T \gamma^{-1})].
$$
Here $g(t) \in GL_{n}(\Kk)$ is an element such that 
$L = g(t) L_{0}$, and 
the $\C$-linear isomorphism $\T \gamma\colon (t \T g(t)^{-1} L_{0}) / t L_{0} \xrightarrow{\sim} \C^{d}$ 
is determined so that the following diagram commutes
$$
\xy
\xymatrix{
(L_{0} / g(t) L_{0} )^{*}
\ar[r]^-{\gamma^{*}}
\ar[d]^{\cong}
&
(\C^{d})^{*} 
\ar[d]^{\cong}
\\
(t \T g(t)^{-1} L_{0}) / t L_{0} 
\ar[r]^-{\T \gamma}
&
\C^{d},
}
\endxy
$$
where the isomorphism 
$(L_{0} / g(t) L_{0} )^{*}
\xrightarrow{\cong}
(t \T g(t)^{-1} L_{0}) / t L_{0}  
$
is given by the residue pairing 
$$
\langle - , - \rangle\colon \Kk^{n} \times \Kk^{n} \to \C;
\quad
\langle v(t), w(t) \rangle := \mathop{\mathrm{Res}}_{t=0} \frac{\T v(t) \cdot w(t)}{t} \mathrm{d}t,
$$
and the isomorphism $(\C^{d})^{*} \xrightarrow{\sim} \C^{d}$ 
is given by the standard pairing $\langle v, w \rangle := \T v \cdot w$. 
We can easily check that the resulting coset $[g(t), (\T g(t) L_{0}, (\T g(t) t^{-1}) \circ \T \gamma^{-1})]$ 
is independent of the choice of $g(t) \in GL_{n}(\Kk)$
such that $L= g(t) L_{0}$.

\begin{Lem}
The morphism $\eta^{\prime}$ is 
$GL_{d}(\C) \times GL_{n}(\Oo) \rtimes \C^{\times}$-equivariant.
\end{Lem}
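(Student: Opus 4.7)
The plan is to verify $\eta'$-equivariance separately for the three mutually commuting factors $GL_d(\C)$, $GL_n(\Oo)$, and the loop-rotation $\C^\times$. For each factor, one applies the group element to $(L, \gamma)$, picks a convenient representative $g'(t) \in GL_n(\Kk)$ for the new lattice (keeping $g(t)$, replacing it by $g_0(t) g(t)$, or by $g(a^4 t)$ respectively), substitutes into the defining formula for $\eta'$, and matches the resulting coset against the image of $\eta'(L, \gamma)$ under the corresponding action on the target. The already-established independence of the representative $g(t)$ will ensure everything is well-defined on cosets.

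For $h \in GL_d(\C)$ the lattice $L$ is unchanged, so $g(t)$ remains a valid representative and the first component of $\eta'$ is preserved. Only $\gamma$ is altered, via $\gamma \mapsto \gamma \circ h^{-1}$, whence $\T(\gamma \circ h^{-1})^{-1} = \T \gamma^{-1} \circ \T h$. On the target side, the $\sigma_1$-twisted action of $h$ on $(\Mm_n^d)^{\sigma_1}$ is the original action of $\T h^{-1}$, which post-composes the $\T \gamma^{-1}$-slot with $\T h$; the two sides agree. For $g_0(t) \in GL_n(\Oo)$ we use $g'(t) = g_0(t) g(t)$; the identity $\T(g_0 g) = \T g \cdot \T g_0$ with $\T g_0(t) \in GL_n(\Oo)$ lets us pull $\T g_0(t)$ across the convolution equivalence $[a, x] \sim [a h, h^{-1} \cdot_{\sigma_1} x]$ with $h = (\T g_0(t), 1)$, noting that the twisted action of this $h$ on $\Mm_n^d$ is the original action of $\sigma_1(h) = (g_0(t), 1)$. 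The naturality identity $\langle \T g_0(t) v, w\rangle = \langle v, g_0(t) w\rangle$ of the residue pairing forces $\T \gamma' = \T \gamma \circ \T g_0(t)$ for $\gamma' = g_0(t) \circ \gamma$, so the modifications of the first and third slots of the pair are precisely those produced by the twisted action, and everything cancels.

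For the loop rotation $a \in \C^\times$ the representative is $g(a^4 t)$, and the residue pairing has the scaling law $\langle v(a^4 t), w(a^4 t)\rangle = a^{-4} \langle v, w\rangle$ coming from $d(a^4 t) = a^4\, dt$. This factor $a^{-4}$ is exactly the scalar appearing in the $GL_d(\C)$-coordinate of $\sigma_1(h, 1, a) = (\T h^{-1} a^{-4}, 1, a)$, and accounts for the $4$-fold cover convention in the definition of the loop rotation. The main obstacle will be bookkeeping: keeping consistent conventions for $\T$ of a composition, for the induced transpose $\T\gamma$ defined through the residue pairing, and for the $a^{-4}$-scaling under $t \mapsto a^4 t$ --- all while keeping track of the twist $\sigma_1$. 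Once these compatibilities are spelled out on formulas, each generator-equivariance is a short identity, and the three cases together imply equivariance for the whole group $GL_d(\C) \times GL_n(\Oo) \rtimes \C^\times$.
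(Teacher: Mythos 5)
Your factor-by-factor strategy differs from the paper's, which runs a single eight-step direct computation with a general element $(h, g_1(t), a)$, but checking equivariance on the generating subgroups $GL_d(\C)$, $GL_n(\Oo)$, and $\C^\times$ is a legitimate reduction. (A small inaccuracy: these are not ``mutually commuting'' since $GL_n(\Oo)$ and the loop-rotation $\C^\times$ form a semidirect product, but this does not affect the reduction.) The $GL_d(\C)$ step as you describe it is fine, and the $GL_n(\Oo)$ step, though described in a more roundabout way than necessary (the factors of $\T g_0(t)$ simply cancel directly, with no need to push anything across the coset relation), does close.

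The genuine gap is in the $\C^\times$ case, where your cancellation mechanism rests on a false identity. The residue pairing in the paper is $\langle v, w\rangle = \mathop{\mathrm{Res}}_{t=0} \frac{\T v(t)\cdot w(t)}{t}\,\mathrm{d}t$, and because the measure $\mathrm{d}t/t$ is invariant under $t \mapsto a^4 t$, one has $\langle v(a^4 t), w(a^4 t)\rangle = \langle v, w\rangle$ \emph{exactly}, not $a^{-4}\langle v,w\rangle$; your claimed scaling would hold for the pairing $\mathop{\mathrm{Res}}_{t=0}\T v(t)w(t)\,\mathrm{d}t$ without the $1/t$, which is not the one used. Consequently the operation $\gamma\mapsto\T\gamma$ is strictly equivariant for the loop rotation and contributes no factor of $a^{-4}$. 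The factor $a^{-4}$ that must cancel against the $a^{-4}$ appearing in $\sigma_1(h,1,a)=(\T h^{-1}a^{-4},1,a)$ actually comes from the $t^{-1}$ in the defining formula $\eta'(L,\gamma)=[g(t),(\T g(t)L_0,(\T g(t)t^{-1})\circ\T\gamma^{-1})]$: conjugating $\T g(t)t^{-1}$ by the loop rotation turns $t^{-1}$ into $(a^4t)^{-1}=a^{-4}t^{-1}$, giving $\T g(a^4 t)\,t^{-1}\circ(1,a)=a^4\,(1,a)\circ\T g(t)t^{-1}$, and it is this $a^4$ that does the cancelling. As written, with a spurious $a^{-4}$ from the residue pairing on top of the correct $a^{\pm4}$ from $t^{-1}$, the powers of $a$ would not balance, so the $\C^\times$-equivariance check must be redone with the correct (invariant) behaviour of the residue pairing and the $a^{-4}$ located in the $t^{-1}$.
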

\begin{proof}
This is proved by the following straightforward computation.
For
$(g(t)L_{0}, \gamma) \in \Mm_{n}^{d}$ and
$(h, g_{1}(t), a) \in GL_{d}(\C) \times GL_{n}(\Oo) \rtimes \C^{\times}$,
we compute:
\begin{eqnarray*}
&& \eta^{\prime}((h, g_{1}(t), a) \cdot (g(t)L_{0}, \gamma)) \\
&= 
& \eta^{\prime}(g_{1}(t)g(a^{4}t) L_{0}, (g_{1}(t), a) \circ \gamma \circ h^{-1}) \\
&=
& [g_{1}(t)g(a^{4}t), (\T (g_{1}(t) g(a^{4}t)) L_{0}, (\T (g_{1}(t) g(a^{4}t)) t^{-1}) \circ 
\T((g_{1}(t), a) \circ \gamma \circ h^{-1})^{-1}] \\
&=
& [g_{1}(t)g(a^{4}t), (\T g(a^{4}t) \T g_{1}(t) L_{0}, (\T g(a^{4}t) \T g_{1}(t) t^{-1}) \circ 
(\T g_{1}(t)^{-1}, a) \circ \T \gamma^{-1} \circ \T h)] \\
&=
& [g_{1}(t) g(a^{4}t), (\T g(a^{4}t) L_{0}, ((\T g(a^{4}t) t^{-1} ,a) \circ \T \gamma^{-1} \circ \T h)] \\
& =
& [g_{1}(t) g(a^{4}t),  ((1,a) \T g(t) L_{0},  ((1,a) \T g(t) t^{-1} a^{4}) \circ \T \gamma^{-1} \circ \T h)] \\
& =
& [g_{1}(t) g(a^{4}t), ((1,a) \T g(t) L_{0}, ((1,a) \T g(t) t^{-1}) \circ \T \gamma^{-1} \circ (\T h a^{4}))] \\
& = 
& [g_{1}(t) g(a^{4}t), (h, 1, a) \cdot (\T g(t) L_{0}, (\T g(t) t^{-1}) \circ \T \gamma^{-1})] \\
& =
& (h, g_{1}(t), a) \cdot [g(t), (\T g(t) L_{0}, \T g(t) t^{-1} \circ \T \gamma^{-1}) ].
\end{eqnarray*}
Therefore we have $\eta^{\prime} ((h, g_{1}(t), a) \cdot (g(t) L_{0}, \gamma)) = 
(h, g_{1}(t), a) \cdot \eta^{\prime}(g(t) L_{0}, \gamma)$.
\end{proof}

Let $\psi_{2}^{\prime}\colon GL_{n}(\Kk) \times^{GL_{n}(\Oo)} (\Mm_{n}^{d})^{\sigma_{1}}  
\to (\Nn_{n}^{d})^{\sigma_{1}}$ be a morphism 
defined by the assignment
$[g(t), (L, \gamma)] \mapsto \gamma^{-1} \circ t|_{L_{0}/L} \circ \gamma.$
We can easily check that this is well-defined and 
$GL_{d}(\C) \times GL_{n}(\Oo) \rtimes \C^{\times}$-equivariant.
On the other hand, 
the transpose map $x \mapsto \T x$ induces a 
$GL_{d}(\C) \times \C^{\times}$-equivariant morphism
$\tau\colon \Nn_{n}^{d} \to (\Nn_{n}^{d})^{\sigma_{1}}$.

\begin{Lem}
The following diagram commutes:
\begin{equation}
 \label{Diag:six}
\xy
\xymatrix{
\Gr_{n}^{d}
\ar[d]^-{\eta}
&
\Mm_{n}^{d}
\ar[l]_-{\psi_{1}}
\ar[r]^-{\psi_{2}}
\ar[d]^-{\eta^{\prime}}
&
\Nn_{n}^{d}
\ar[d]^-{\tau}
\\
GL_{n}(\Kk) \times^{GL_{n}(\Oo)} (\Gr_{n}^{d})^{\sigma_{1}}
&
GL_{n}(\Kk) \times^{GL_{n}(\Oo)} (\Mm_{n}^{d})^{\sigma_{1}}
\ar[l]_-{(\mathrm{id} \times \psi_{1})}
\ar[r]^-{\psi_{2}^{\prime}}
&
(\Nn_{n}^{d})^{\sigma_{1}},
}
\endxy
\end{equation}
where all arrows are $GL_{d}(\C) \times GL_{n}(\Oo) \rtimes \C^{\times}$-equivariant.
\end{Lem}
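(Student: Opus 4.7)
The plan is to verify commutativity of the two squares separately and then address the equivariance of the two arrows not yet established, namely $\psi_2'$ and $\tau$; equivariance of $\eta$, $\eta'$, $\psi_1$, and $\psi_2$ is already in hand. For the left square, the computation is immediate: given $(L,\gamma)\in\Mm_n^d$ with $L=g(t)L_0$, applying $\mathrm{id}\times\psi_1$ to the explicit formula
$$
\eta'(L,\gamma)=[g(t),(\T g(t)L_0,\,(\T g(t)t^{-1})\circ\T\gamma^{-1})]
$$
simply drops the trivializing isomorphism in the second component and yields $[g(t),\T g(t)L_0]$, which by construction coincides with $\eta([g(t)])=\eta\circ\psi_1(L,\gamma)$.

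The right square is the main content. I would first observe that $t^{-1}\T g(t)$ induces an isomorphism $t\T g(t)^{-1}L_0/tL_0\xrightarrow{\sim}L_0/\T g(t)L_0$ (well-defined since $\T g(t)\cdot(t\T g(t)^{-1}L_0)=tL_0$ and $\T g(t)\cdot(tL_0)=t\T g(t)L_0$) and that this isomorphism intertwines multiplication by $t$ on source and target, which is a one-line check on representatives. Conjugating $t|_{L_0/\T g(t)L_0}$ by the composite $(\T g(t)t^{-1})\circ\T\gamma^{-1}\colon\C^d\xrightarrow{\sim}L_0/\T g(t)L_0$ therefore produces
$$
\psi_2'(\eta'(L,\gamma))=\T\gamma\circ t|_{t\T g(t)^{-1}L_0/tL_0}\circ\T\gamma^{-1}.
$$

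On the other side one unwinds $\tau(\psi_2(L,\gamma))=\T(\gamma^{-1}\circ t|_{L_0/L}\circ\gamma)$ by dualizing the middle map, and the key input is that the residue pairing intertwines multiplication by $t$: for $v\in L_0$ and $w\in t\T g(t)^{-1}L_0$, both $\langle tv,w\rangle$ and $\langle v,tw\rangle$ compute the coefficient of $t^{-1}$ in $\T v\,w$. Combined with the defining square of $\T\gamma$ (which translates between the residue-pairing dual of $\gamma$ and its standard-pairing transpose), this identifies $\tau(\psi_2(L,\gamma))$ with the same expression $\T\gamma\circ t|_{t\T g(t)^{-1}L_0/tL_0}\circ\T\gamma^{-1}$.

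Finally, equivariance of $\psi_2'$ reduces to the fact that for $h(t)\in GL_n(\Oo)$ the isomorphism $\T h(t)^{-1}\colon L_0/L\xrightarrow{\sim}L_0/\T h(t)^{-1}L$ commutes with multiplication by $t$, so $\psi_2'$ descends to the quotient by $GL_n(\Oo)\rtimes\C^\times$; the $GL_d(\C)$-part is straightforward from the twisted action. Equivariance of $\tau$ is the direct computation $\tau(a^{-4}hxh^{-1})=a^{-4}\T h^{-1}(\T x)\T h$, which is exactly the twisted action of $(h,a)$ on $(\Nn_n^d)^{\sigma_1}$, while the $GL_n(\Oo)$-factor acts trivially on both source and target. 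The anticipated main obstacle is the right square: one has to keep track of the four lattices $L_0$, $L=g(t)L_0$, $\T g(t)L_0$, and $t\T g(t)^{-1}L_0$ simultaneously and ensure that transposition, the residue pairing, and the action of $t$ all interact compatibly; once these identifications are set up, the rest of the argument is routine bookkeeping.
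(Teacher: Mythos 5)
Your proof is correct and follows essentially the same route as the paper: the left square is immediate from the formula for $\eta'$, and the right square reduces to the identity $\T(\gamma^{-1}\circ t|_{L_0/g(t)L_0}\circ\gamma)=\T\gamma\circ t|_{t\T g(t)^{-1}L_0/tL_0}\circ\T\gamma^{-1}$, which you verify by the same residue-pairing/transpose compatibility that the paper invokes. You give more detail (tracking the four lattices, the $t$-intertwining of the residue pairing, and the equivariance of $\psi_2'$ and $\tau$, which the paper handles in the preamble), but the underlying argument is the one intended.
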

\begin{proof}
The commutativity of the left square is obvious from the definitions.
The commutativity of the right square follows from the relation
$$
\T (\gamma^{-1} \circ t|_{L_{0}/g(t)L_{0}} \circ \gamma) 
= \T \gamma \circ t|_{t \T g(t)^{-1} L_{0} / L_{0}} \circ \T \gamma^{-1},
$$
which holds for any $(g(t) L_{0}, \gamma) \in \Mm_{n}^{d}$. 
\end{proof}

\begin{proof}[Proof of Proposition \ref{Prop:categorical iota-Psi}]
Using the commutative diagram~(\ref{Diag:six}), we have
\begin{align*} 
\eta^{*}(\Oo_{\Gr_{n}^{d}} \widetilde{\boxtimes} \psi^{*}(\mathcal{F})^{\sigma_{1}}) 
& \cong
(\psi_{1}^{*})^{-1} (\eta^{\prime})^{*}(\Oo_{\Mm_{n}^{d}} \widetilde{\boxtimes} \psi_{2}^{*}(\mathcal{F}^{\sigma_{1}})) \\
& = 
(\psi_{1}^{*})^{-1} (\eta^{\prime})^{*} (\psi_{2}^{\prime})^{*} \mathcal{F}^{\sigma_{1}} \\
& =
(\psi_{1}^{*})^{-1} \psi_{2}^{*} \tau^{*} (\mathcal{F}^{\omega^{-2}})^{\sigma} \\
& \cong
\psi^{*} \tau^{*} (\mathcal{F}^{\omega^{-2}})^{\sigma}, 
\end{align*}
and hence $ \eta^{*}(\Oo_{\Gr_{n}^{d}} \widetilde{\boxtimes} \Psi(\mathcal{F})^{\sigma_{1}})  \simeq \Psi \circ \tau^{*} (\mathcal{F}^{\omega^{-2}})^{\sigma}$.
Applying the duality functor $\mathbb{D}_{\Gr_{n}^{d}}$, we have 
$$
\iota \circ \Psi (\mathcal{F}) 
= \mathbb{D}_{\Gr_{n}^{d}} \circ \eta^{*}(\Oo_{\Gr_{n}^{d}} \widetilde{\boxtimes} \Psi(\mathcal{F})^{\sigma_{1}}) 
\cong \Psi \circ \mathbb{D}_{\Nn_{n}^{d}} \circ \tau^{*} (\mathcal{F}^{\omega^{-2}})^{\sigma} 
= \Psi \circ \iota (\mathcal{F}^{\omega^{-2}}),
$$ 
where we used Proposition~\ref{Prop:Psi}(\ref{Prop:Psi:duality}) and the relation~(\ref{Eq:duality Gr}).
\end{proof}

\section{Erratum}
\label{erratum}

We are grateful to Vasily Krylov and Sabin Cautis who have noticed
that~Lemmma~\ref{Lem:image IC} is false as stated. Namely, the map 
\[[\Psi]\colon
K^{GL_{d}(\C) \times \C^{\times}}(\mathcal{N}_{n}^{d}) \xrightarrow{}
K^{GL_{n}(\Oo) \rtimes \C^{\times}}(\Gr_{n}^{d})\] is {\em not} an isomorphism, it is only injective.
Indeed, certain parts of $\nu=(\nu_1\geq\ldots\geq\nu_n\geq0)$ can be zero, so we write
$\nu=(i^{n_i})$, $\sum_{i\geq0}n_i=n$, allowing $i=0$. Then the reductive part of the stabilizer
in $GL(d)$ is $\prod_{i\geq1}GL(n_i)$, while the reductive part of the stabilizer in
$GL(n,{\mathcal O})$ is $\prod_{i\geq0}GL(n_i)$. Hence $[\Psi]$ is not surjective in general,
when $n_0>0$.

When~Lemma~\ref{Lem:image IC} is used in the proof of the main~Theorem~\ref{Thm:0}, we actually
do not use the surjectivity of $[\Psi]$, but the following weaker statement. Given
${\mathcal P}_{\nu,\mu}\in K^{GL_{n}(\Oo) \rtimes \C^{\times}}(\Gr_{n}^{d})$, there is a positive integer
$N\gg0$ such that the convolution ${\mathcal P}_{\nu,\mu}*{\mathcal P}_{\nu',\mu'}$ lies in the
image of $[\Psi]$, where $\mu'=(0,\ldots,0)$, $\nu'=(N,\ldots,N)$. This weaker statement
clearly holds true.


\end{document}